\documentclass[a4paper]{amsart}
\usepackage{amsmath}
\usepackage{amssymb}
\usepackage[arrow, matrix, curve]{xy}
\usepackage{xypic, calc}
\usepackage{hyperref}

\usepackage{tikz}

\usetikzlibrary{matrix,calc,arrows}

\def\to{\rightarrow}

\def\Bar{\mathcal B}
\def\bBar{\bar\Bar}
\def\coBar{\mathcal B^\vee}
\def\OOmega{\mathbf \Omega}

\def\AA{\mathbb A}
\def\AAop{\AA^{op}}
\def\RR{\mathbb R}
\def\RRop{\RR^{op}}
\def\Hom{\text{Hom}}
\def\prodc{\prod^{\quad \ c}}

\def\CC{\mathbb C}

\def\Ch{Ch}

\def\isomap{\xrightarrow{\raisebox{-0.7ex}[0ex][0ex]{$\sim$}}}

\def\mono{\rightarrowtail}

\DeclareMathOperator*{\sbigotimes}{\text{\raisebox{0.25ex}{\scalebox{0.8}{$\bigotimes$}}}}

\DeclareMathOperator*{\sbigoplus}{\text{\raisebox{0.25ex}{\scalebox{0.8}{$\bigoplus$}}}}

\newcommand{\acc}{\text{\raisebox{\depth}{\textexclamdown}}}

\newtheorem{theorem}{Theorem}[section]
\newtheorem{lemma}[theorem]{Lemma}
\newtheorem{prop}[theorem]{Proposition}

\theoremstyle{definition}

\newtheorem{example}[theorem]{Example}

\theoremstyle{remark}
\newtheorem{remark}[theorem]{Remark}

\numberwithin{equation}{section}

\begin{document}

  \title{Koszul duality for algebras over infinity-operads}

\author[E. Hoffbeck]{Eric Hoffbeck}
\address{Université Sorbonne Paris Nord, LAGA, CNRS, UMR 7539, F-93430, Villetaneuse, France}
\email{hoffbeck@math.univ-paris13.fr}

\author[I. Moerdijk]{Ieke Moerdijk}
\address{Department of Mathematics, Utrecht University, PO BOX 80.010, 3508 TA Utrecht, The Netherlands}
\email{i.moerdijk@uu.nl}
 
\subjclass[2010]{18N70, 55N35, 18M70}

\keywords{}

\thanks{}
%
\begin{abstract}
In this paper, we introduce a new notion of algebra over a linear $\infty$-operad and a corresponding notion of coalgebra over an $\infty$-cooperad. We next extend the Koszul duality between linear $\infty$-operads and linear $\infty$-cooperads from~\cite{HM21} to their categories of algebras and coalgebras. This duality theorem specialises to the known duality in the case of algebras over classical (non-infinity) operads, but our proof is different. In fact, it is based on a much more general duality between presheaves and copresheaves on a category of trees. The latter duality is a priori independent of the (co)algebra structures, but we show that it can be lifted to (co)presheaves supporting such a structure. Based on this duality, we define the homology of an algebra over an $\infty$-operad, and prove that it can be described in terms of the homology of the same category of trees with coefficients in a presheaf. 
\end{abstract}

\maketitle
 

%

\section*{Introduction}

The goal of this paper is to construct the bar complex $\Bar(P,A)$ of an algebra $A$ over an operad $P$, prove a duality theorem about it, and construct the Andr\'e-Quillen homology groups of $A$ as the derived functor of the indecomposables.
When phrased like this, these constructions and results may sound familiar to the reader. (We will give some references later.)
However, we will work in the general context of linear $\infty$-operads, and it is a priori not even clear what an algebra over such an operad should be. To elaborate on this last point right away, recall from~\cite{HM21} that a linear $\infty$-operad can be defined as a presheaf $X$ on a category $\AA$ of trees with values in chain complexes, equipped with additional structure maps $X (S \circ_e T) \to X(S) \otimes X(T)$ for any grafting $S \circ_e T$ of a tree $T$ onto a leaf $e$ of another tree $S$, satisfying some standard associativity conditions. 
These structure maps are then required to be quasi-isomorphisms. One can define $\infty$-cooperads in a dual way, as copresheaves with additional structure maps. In \emph{loc. cit.}, we proved the following.

\medskip

\begin{theoremintro}
 \begin{enumerate}
  \item There is a ``cobar-bar'' pair of adjoint functors $\coBar: CoPsh(\AA) \to Psh(\AA)$ and $\Bar: Psh(\AA) \to CoPsh(\AA)$ between the categories of presheaves and copresheaves on $\AA$ with values in chain complexes.
  \item These functors are mutually inverse up to quasi-isomorphism.
  \item The adjunction lifts to the categories of $\infty$-operads and $\infty$-cooperads.
 \end{enumerate}
\end{theoremintro}

\medskip
We will give a more precise formulation of this theorem and the constructions involved in Section~\ref{Section:Operads} so that this paper can be read independently from~\cite{HM21}; moreover our sign conventions here are slightly different.

In this paper, we will introduce an extension of $\AA$ to a larger category $\RR$ of trees, and first prove the analogue of Part~(1) of the theorem above. Since this category $\RR$ does not enjoy the same finiteness properties that $\AA$ does, however, we will have to restrict ourselves to \emph{conilpotent} presheaves (cf. Section~\ref{Conilp} and Proposition~\ref{adjonctionAlg} below).

\medskip

\begin{theoremintro}
There are adjoint functors 
$$\coBar: CoPsh^c(\RR) \rightleftarrows Psh(\RR) : \Bar$$
between the categories of conilpotent copresheaves and that of presheaves on $\RR$ with values in chain complexes.
The unit and counit of this adjunction are quasi-isomorphisms.
\end{theoremintro}

\medskip

The motivation for the definition of the category $\RR$ comes from the fact that for an $\infty$-operad $X$ (a presheaf on $\AA$), there is a natural way of defining an $\infty$-algebra over $X$ as a presheaf $M$ on $\RR$ equipped with certain structure maps relating $M$ to $X$, a bit like $M(S \circ_e T) \to X(S) \otimes M(T)$ for a grafting as above ; see Section~\ref{defalgebras} for a precise definition.
With a similar dual definition of a conilpotent $\infty$-coalgebra over an $\infty$-cooperad $Y$ as a conilpotent copresheaf $N$ on $\RR$ with structure maps relating $N$ to $Y$, we are then able to show that for an $\infty$-operad $X$ the adjunction in the previous theorem can be refined as follows. 

\medskip

\begin{theoremintro}
The previous adjunction lifts to an adjunction between conilpotent $\infty$-coalgebras over $\Bar X$ and $\infty$-algebras over $X$.\\
\end{theoremintro}

\noindent (See Theorem~\ref{ThAdjAlgCoalg} for a precise formulation, including a dual statement for $\infty$-coalgebras over an $\infty$-cooperad $Y$ and $\infty$-algebras over $\coBar Y$).

\medskip

If the $\infty$-operad comes from a classical (``strict'') differential graded operad $P$, then any $P$-algebra $A$ defines an $\infty$-algebra $M=N(P,A)$ over $X$, and one can recover the usual bar construction of $A$ from $\Bar M$ by evaluation of the copresheaf at a specific tree. In this way, our bar-cobar duality for $\infty$-algebras and $\infty$-coalgebras extends in a very precise way the duality for algebras over operads and coalgebras over cooperads as discussed in~\cite[Section 11.3]{LV}).
We emphasize, however, that our proofs are different and initially apply to presheaves and copresheaves, even before adding the (co-)algebra structure maps; cf. the two theorems above.

\medskip

For every natural number $\ell \geq 2$ the corolla tree $C_\ell$ with $\ell$ leaves, pictured as
\begin{equation*}
 \xymatrix@R=12pt@C=14pt{
&\ldots&\\
&*=0{\bullet}\ar@{-}\ar@{-}[u]\ar@{-}[ul]\ar@{-}[ur]&\\
&\ar@{-}[u]&&\\
}
\end{equation*}
is an object of the category $\AA$. In~\cite{HM21} we defined the ``dendroidal homology'' $DH_*^{(\ell)}(X)$ of an $\infty$-operad $X$ as the homology of the chain complex $\Bar(X)(C_\ell)$.
For all $\ell \geq 2$ together these graded abelian groups form a (strict) cooperad $DH_*(X)$. If $X$ is a classical Koszul operad in vector spaces, this cooperad is precisely the Koszul dual of $X$, discussed (in this classical context) in~\cite{GK,GJ,LV} among others. These homology groups $DH_*^{(\ell)}(X)$ can in fact be described in very traditional terms as the homology of a pair of categories,
$$DH_*^{(\ell)}(X)=H_*(C_\ell / \AA, C_\ell /\!/ \AA ; X)$$
where $C_\ell /\!/ \AA \subseteq C_\ell / \AA $ is the full subcategory of the slice category on the non-isomorphisms $C_\ell \to T$ in $\AA$ (see~\cite[Proposition 3.7]{HM21}).
This category $C_\ell /\!/ \AA $ is closely related to the \emph{partition complex} of a set with $\ell$ elements, and hence (when $\ell$ varies) one finds a direct relation to the \emph{Lie operad} ; see~\cite{Robinson, Joyal, F, Arone-Brantner, HM-tree}.

It is natural to define in a similar way the dendroidal homology of an $\infty$-algebra $M$ over an $\infty$-operad $X$ as the homology of the bar complex of $M$ evaluated at the tree $\eta$ consisting of just one leaf and no vertices, that is
$\eta = |$.
In other words, by definition,
\begin{equation}\tag{$1$}\label{DH-intro}
DH_*(M)=H_*(\Bar(M)(\eta)).
\end{equation}
This homology has the structure of a conilpotent coalgebra over the cooperad $DH_*(X)$. Moreover, we will show in Proposition~\ref{Prop:HomologyOfCat} it can again be described in terms of the homology of categories, as 
$$DH_*(M)=H_*( \eta / \RR, \eta /\!/ \RR ; M)$$
(with the double slash again denoting the full subcategory on the non-isomorphisms).

If $M=N(P,A)$ comes from an ``ordinary'' algebra $P$ over a classical operad $P$, this definition~(\ref{DH-intro}) agrees with the homology of $A$ as defined in~\cite{LV}. It is also possible to define this homology in terms of a cotangent complex~\cite{Milles}. The idea of using an operadic bar complex to define the homology of a $P$-algebra $A$ also occurs in different contexts in works of Ching~\cite{C,C2} and Harper~\cite{Harper}.
Different notions of algebras (and coalgebras) over enriched $\infty$-operads also appear in works of Haugseng~\cite{Haugseng} and a recent preprint of Petersen, Roca i Lucio and Yalin~\cite{RocaLucioPetersenYalin}. There is so far no comparison between these notions.

\bigskip

\textbf{Acknowledgements:} We 
would like to thank our institutions, as well as ANR HighAGT (ANR-20-CE40-0016) and the Dutch science foundation NWO, for supporting several mutual visits. 


%

\section{Some categories of trees}\label{S:Categories}

\subsection{The category $\Omega$}

We recall from~\cite{HM} the category $\OOmega$ of trees. Its objects are finite trees with external edges, one of which is selected as the \emph{root}, while the others are referred to as \emph{leaves}.
The other edges, connected to two vertices, are called \emph{internal}. The \emph{valence} of a vertex is the number of incoming edges, where the edges are oriented towards the root. A vertex is called \emph{external} if it has only one internal edge attached to it. An object of $\Omega$ is called \emph{reduced} if it has no unary vertices, i.e., vertices of valence 1, and \emph{open} if it has no vertices of valence zero.

\begin{example}
 The second tree $T$ pictured below has two internal edges $c$ and $e$,
 \begin{equation*}
 \vcenter{
 \xymatrix@R=12pt@C=14pt{
&*=0{\ \, \bullet_s}&\\
&*=0{\ \, \bullet_w}\ar@{-}\ar@{-}[u]^{e}\ar@{-}[ul]^d\ar@{-}[ur]_f&\\
&\ar@{-}[u]^c&&\\
}}
\stackrel{\partial_v}{\longrightarrow} \ 
\vcenter{\xymatrix@R=12pt@C=14pt{
&\\
&*=0{\ \, \bullet_s}&&&\\
&*=0{{}_w \bullet \ \ }\ar@{-}[ul]^d\ar@{-}[u]^e\ar@{-}[ur]_f&&\\
&&*=0{\ \, \bullet_v}\ar@{-}\ar@{-}[ul]^{c}\ar@{-}[ur]_b&&\\
&&\ar@{-}[u]^a&&\\
}}
\stackrel{\partial_e}{\longleftarrow} \ 
\vcenter{\xymatrix@R=12pt@C=14pt{
&&&&\\
&*=0{{}_w \bullet \ \ }\ar@{-}[ul]^d\ar@{-}[ur]_f&&\\
&&*=0{\ \, \bullet_v}\ar@{-}\ar@{-}[ul]^{c}\ar@{-}[ur]_b&&\\
&&\ar@{-}[u]^a&&\\
}}
\stackrel{\partial_c}{\longleftarrow}
\vcenter{\xymatrix@R=12pt@C=14pt{
&&\\
&*=0{\ \, \bullet_v}\ar@{-}\ar@{-}[u]^{f}\ar@{-}[ul]^d\ar@{-}[ur]_b&\\
&\ar@{-}[u]^a&&\\
}}
\end{equation*}
 three leaves $b,d,f$ and a root $a$. The vertex $v$ of valence $2$ is external, as is the stump $s$ on top of valence zero. The tree $T$ is reduced but not open.
\end{example}

The morphisms $S \to T$ in $\Omega$ are compositions of ``elementary'' morphisms: isomorphisms, external faces $\partial_v: S\to T$ where $S$ is obtained from $T$ by chopping off an external vertex $v$ (together with the external edges attached to it), internal (or inner) faces $\partial_e:S\to T$ where $S$ is obtained from $T$ by contracting an internal edge, and degeneracies $S \to T$ where $T$ is obtained from $S$ by erasing a unary vertex (and identifying its incoming and outgoing edges). In the example above, $\partial_v$ is an external face chopping off the root vertex $v$, while $\partial_e$ and $\partial_c$ are inner faces.

A \emph{subtree} $S$ of $T$ is the source of a composition of external faces $ S\to T$. If this map preserves the root, we call the subtree $S$ and the map a \emph{pruning} of $T$. The set of leaves of a tree $T$ is usually denoted by $\lambda(T)$.
A tree with just one vertex is called a \emph{corolla}, and the unique tree with no vertices is denoted $\eta$. It has just one edge which is at the same time the leaf and the root. For more details and examples, see~\cite{HM}.

\subsection{Subcategories of $\Omega$}

In this paper, we will consider two subcategories $\AA$ and $\RR$ of $\Omega$. In fact, they are subcategories of the category $\Omega^o_r$ of open and reduced trees. Notice that the subcategory 
$\Omega^o_r$ does not have degeneracies, so all morphisms are compositions of external or internal faces and isomorphisms.

The category $\AA$ consists of open and reduced trees (so all of whose vertices have valence at least $2$.) 
The morphisms in $\AA$ are compositions of isomorphisms and inner faces. These are the morphisms $S\to T$ in $\Omega$ which preserve the root edge and the set of leaves (but they can permute the leaves).

The larger category $\RR$ also consists of open and reduced trees. The morphisms in $\RR$ are those morphisms in $\Omega$ which preserve the root edge. So these are compositions of isomorphisms, inner faces and external faces (ie. prunings) chopping off a vertex attached to leaves.

So we have embeddings
$$\AA \subseteq \RR \subseteq \Omega^o_r \subseteq \Omega.$$

\subsection{Factorisation}

Let us refer to composition of inner faces simply as inner faces also, and similarly for external faces. We will often use the fact that any morphism $\alpha: S\mono T$ in $\Omega^o_r$ factors uniquely up to isomorphism as an inner face followed by an external one.
\begin{equation*}
\xymatrix@M=8pt{
 S\ar@{>->}[rr]^{\alpha}\ar@{>->}[dr]_{\beta} & & T\\
 & R \ar@{>->}[ur]_{\gamma} & \\}
\end{equation*}  
The leaves and root of $R$ are the images under $\alpha$ of the leaves and root of $S$. The map $\beta$ is in $\AA$. If $\alpha$ is a map in $\RR$, then $\gamma$ is again in $\RR$, making $R$ a pruning of $T$.

\subsection{Inner faces and blow up}
If $\alpha: S\to R$ is a morphism in $\AA$, we can also think of $R$ as obtained from $S$ by blowing up each vertex $v$ of $S$ to a subtree of $R$ that we denote by $R_v$ or $R(\alpha)_v$. This subtree $R_v \mono R$ can be defined uniquely up to isomorphism by using the factorisation above: The vertex $v$ defines a unique external face map 
$$i_v : C_v \to S$$
from a corolla sending the unique vertex of $C_v$ to $v\in S$. The composition $\alpha \circ i_v $ now factors essentially uniquely as a map in $\AA$ followed by an external face map, as in 
\begin{equation*}
\xymatrix@M=8pt{
 C_v\ar[r]^{i_v} \ar@{-->}[d] & S \ar[d]^\alpha\\
  R(\alpha)_v \ar@{>->}[r] & R \\}
\end{equation*}  
and this defines $R(\alpha)_v$ uniquely as a subtree of $R$. Note that $R(\alpha)_v \to R$ is not a morphism in $\RR$ unless $v$ is the root vertex of $S$.

\subsection{Grafting and decomposition of trees}

We first discuss a simple case. If $e$ is an inner edge in a tree $T$, ``cutting'' the tree $T$ at $e$ results in two subtrees (external faces) $S \mono T$ and $R \mono T$ such that $e$ is a leaf of $S$ and the root of $R$. Then $T$ is the grafting of $R$ onto $S$ at $e$, denoted 
$$T= S \circ_e R.$$
(Note that of the two inclusions $S \mono T$ and $R \mono T$, only $S \mono T$ lies in $\RR$.) We also say that $S$ and $R$ form a \emph{decomposition} of $T$. This decomposition corresponds to an isomorphism class of morphisms in $\AA$ 
$$ \alpha : T_e \mono T$$
where $T_e$ is obtained by contracting all inner edges in $T$ except $e$. So $T_e$ has one inner edge only, and two vertices. In terms of the blow-ups discussed before, $S=T(\alpha)_r$ where $r$ is the root vertex of $T_e$ and $R=T(\alpha)_v$ where $v$ is the top vertex of $T_e$.

More generally, if $E$ is a family of inner edges in $T$, cutting $T$ at all these edges results in a decomposition of $T$ into smaller trees, or an iterated grafting 
\begin{equation}\tag{$\star$}
T = T_r \circ_{e \in E} T_e 
\end{equation}
where $T_r$ is the connected component containing the root edge (and root vertex), and $T_e$ is the connected component having $e$ as root edge (and the vertex of $T$ immediately above $e$ as root vertex). In terms of blow-ups, write $T_E$ for the tree obtained by contracting all inner edges not in $E$. This gives an inner face map $\alpha : T_E  \mono T$, and $E$ is exactly the set of inner edges of $T_E$. Moreover, for the vertex $v$ above $e$ in $T_E$, we have $T(\alpha)_v=T_e$, while for the root vertex $r$ of $T_E$, we have
$T(\alpha)_r=T_r$.  In this way, decompositions of a tree $T$ as in ($\star$) correspond bijectively to isomorphism classes of morphisms $S \mono T$ in $\AA$, the subtrees of $T$ constituting the decomposition being the blow-ups of vertices in $S$.

%

\section{Presheaves, (co)operads and (co)algebras}\label{DefStructures}

In this section, we review the notion of preoperad  and $\infty$-operad and their dual versions from~\cite{HM21}. In addition, we present a corresponding notion of prealgebra and algebra over a preoperad and $\infty$-operad, together with its dual coalgebraic version.

\subsection{Presheaves}

Let us fix a field $k$ and simply write $Ch$ for the category of chain complexes over $k$. A \emph{presheaf} on $\AA$ is a contravariant functor on $\AA$ with values in $Ch$, so $\AAop \to Ch$, and a \emph{copresheaf} is a covariant functor $\AA \to Ch$. We will also consider presheaves and copresheaves on the larger category $\RR$. The morphisms between such (co)presheaves are the natural transformations. A morphism is a quasi-isomorphism if each of its components is.

\begin{example}\label{2.1}
 The following examples motivate much of what follows. If $P$ is an (uncoloured) operad in $Ch$, its nerve $NP$ defines a presheaf on $\AA$, defined on an object $T$ by 
 $$NP(T)=\bigotimes_v P(in(v))$$
 where $v$ ranges over vertices of $T$ and $in(v)$ is the set of incoming edges of $v$. If $S \to T$ is a morphism in $\AA$, composition in the operad $P$ defines a map $NP(T) \to NP(S)$.
 In this context,  it is convenient to present $P$ in a ``coordinate free'' fashion, with structure maps $P(F) \otimes \bigotimes_f P(G_f) \to P(G)$ for maps $G \to F$ of finite sets, where the fibre over $f$ is denoted by $G_f$.
 The reader will notice that this nerve faithfully represents the operad $P$ when $P$ is reduced in the sense that $P(0)=0$ and $P(1)=k$.
 
 If $A$ is a $P$-algebra, then $A$ and $P$ together define a presheaf $N(P,A)$ on the larger category $\RR$, by
 $$N(P,A)(T)=NP(T) \otimes \bigotimes_\ell A=NP(T) \otimes A^{\otimes \lambda T}$$
 where $\ell$ ranges over the leaves of $T$. A morphism $S\to T$ in $\AA$ induces a map $N(P,A)(T) \to N(P,A)(S)$ as for $NP$, leaving the tensor power of $A$ alone. A morphism $\partial_w: S \to T$ chopping off a top vertex $w$ defines a map
 $$\partial_w^*: N(P,A)(T) \to N(P,A)(S)$$
 as follows.
 Write 
%
 $$N(P,A)(T) = \left( \sbigotimes_{v \neq w} P(in \, v) \otimes
  \sbigotimes_{\ell \notin in \, w} A \right) \otimes \left(  P(in \, w) \otimes \sbigotimes_{\ell \in in \, w}  A\right).$$
   The action of $P$ on $A$ gives a map $P \displaystyle{(in\,  w) \otimes\sbigotimes_{\ell \in in \, w} A \to A}$, and hence a map
%
  $$ N(P,A)(T) \to \sbigotimes_{v \neq w} P(in \, v) \otimes \sbigotimes_{\ell \notin in \, w} A \otimes A.$$
 The codomain is precisely $N(P,A)(S)$, with the last occurence of $A$ corresponding to the leaf of $S$ that was the output edge of $w$ in $T$.
\end{example}


\subsection{Preoperads and $\infty$-operads}

A preoperad is a presheaf $X$ on $\AA$ equipped with structure maps $\theta_e^{S,T}$ for each grafting $S \circ_e T$, usually simply denoted by $\theta_e$ or just $\theta$,
$$ \theta : X (S \circ_e T) \to X(S) \otimes X(T).$$
These structure maps  are required to satisfy the evident naturality conditions in $S$ and $T$, as well as associativity conditions. The latter come in two kinds, for ``nested'' graftings and ``parallel'' graftings, as pictured in
\begin{equation*}
\vcenter{\xymatrix@R=7pt@C=7pt{
&*{}\ar@{-}[rrrr]&&&&*{}\\
&&& U &&&\\
&*{}\ar@{-}[rrrr]&&*{} \ar@{-}[uull]\ar@{-}[uurr]&&*{}\\
&&& T &&&\\
&*{}\ar@{-}[rrrr]&&*{} \ar@{-}[uull]\ar@{-}[uurr]&&*{}\\
&&& S &&\\
&&&*{} \ar@{-}[uull]\ar@{-}[uurr]&& \\
&&&*{} \ar@{-}[u] \\
}}
\quad \quad \text{ and } \quad \quad
\vcenter{\xymatrix@R=10pt@C=10pt{
*{}\ar@{-}[rr]&&*{}&&*{}\ar@{-}[rr]&&*{}\\
&T_1&&&&T_2&&\\
&*{}\ar@{-}[uul]\ar@{-}[uur]\ar@{-}[rrrr]&&&&*{}\ar@{-}[uul]\ar@{-}[uur]\\
&&& S &&\\
&&&*{}\ar@{}[u] \ar@{-}[uull]\ar@{-}[uurr]&& \\
&&&*{} \ar@{-}[u] \\
}}\end{equation*}  
exactly as for the usual associativity conditions for the operadic partial compositions, see~\cite{LV}.
We will always assume that $X(\eta)$ is $k$ and $\theta_e^{S,T}$ is the canonical isomorphism whenever $S$ or $T$ is $\eta$.

For $x \in X(S \circ_e T)$, we will denote $\theta_e^{S,T}(x)$ by $x_1 \otimes x_2$ in $X(S) \otimes X(T)$, rather than  $\sum x_1 \otimes x_2$, keeping the sum implicit. This abuse of notation will also be used for all the other kinds of decomposition maps which will appear in the rest of the paper.

The morphisms between preoperads $X \to X'$ are morphisms of presheaves which respect these structure maps $\theta$. In this way, we obtain a category of preoperads, with a forgetful functor to the category of presheaves
$$ PreOp \to Psh(\AA).$$

A preoperad $X$ is called an \emph{$\infty$-operad} if each of its structure maps $X (S \circ_e T) \to X(S) \otimes X(T)$ is a quasi-isomorphism.
The category of $\infty$-operads is denoted by $\infty Op$.
Notice that if $X=NP$ is the nerve of a classical operad, these structure maps are in fact isomorphisms. In this case, we say that $X$ is a \emph{strict} operad.

\subsection{Prealgebras and $\infty$-algebras}\label{defalgebras}

Let $X$ be a preoperad. An $X$-prealgebra is a presheaf $M$ on $\RR$, equipped with a structure map $\tau^\alpha$ for each morphism $\alpha:S\to T$ in $\RR$, of the form
$$\tau^\alpha : M(T) \to X(S) \otimes M(T/ S)$$
where $M(T/ S)=\bigotimes_\ell M(T_\ell)$ with $\ell$ ranging over the leaves of $S$ and $T_\ell=T(\alpha)_\ell$ the subtree of $T$ whose root is $\alpha(\ell)$. When no confusion can arise, we will simply write $\tau$ for $\tau^\alpha$. These structure maps should again satisfy obvious naturality, unitality (that is, $\tau$ is the canonical isomorphism when $S=\eta$) and associativity conditions. In particular, if we factor $\alpha: S \to T$ as $\alpha' \circ \gamma : S \to S' \to T$ where $\gamma$ is in $\AA$ and $\alpha' : S' \to T$ is an external face (ie. a pruning), then the leaves of $S$ are those of $S'$ and $T / S = T / S'$, and
\begin{equation*}
\xymatrix@M=8pt{
 M(T)\ar[rr]^{\tau}\ar[drr]_{\tau} & & X(S')\otimes M(T/ S') \ar[d]^{\gamma^* \otimes id}\\
 & & X(S)\otimes M(T/ S)  \\}
\end{equation*}  
commutes.
Thus the structure maps $\tau$ are completely determined by those for prunings only. Also note that any pruning is a composition of faces chopping off a single vertex. In this way, we see that the presheaf $N(P,A)$ associated to an algebra over a classical operad discussed above is an example of an $N(P)$-prealgebra.

In the special case where $X$ is an $\infty$-operad we say that $M$ is an \emph{$\infty$-algebra over $X$} if for each pruning $\alpha: S \to T$ in $\RR$, the associated structure map $M(T) \to X(S) \otimes M(T / S)$ is a quasi-isomorphism.

With the evident structure preserving morphisms, these definitions give categories 
$$Prealg(X) \ \text{ and }  \ \infty Alg(X)$$
of prealgebras and $\infty$-algebras over $X$.

\subsection{Pushforward}\label{pushforward}

Let $f:X \to X'$ be a morphism of preoperads (or $\infty$-operads).
Then any prealgebra $M$ over $X$ yields one over $X'$ by composition 
of the structure maps
\begin{equation*}
\xymatrix@M=8pt{
 M(T)\ar[rr]^{\tau}\ar[drr]_{\tau} & & X(S)\otimes M(T/ S) \ar[d]^{f \otimes id}\\
 & & X'(S)\otimes M(T/ S) \\}
\end{equation*}  
We will also write $f_!(M)$ or $f_!(M,\tau)$ for $M$ as $X'$-prealgebra with structure maps $\tau'=(f\otimes id) \circ \tau$. Notice that if $M$ is an $\infty$-algebra, $f_!(M)$ is one only if $f$ is a quasi-isomorphism.
This functor $f_!$ has a right adjoint $f^*$ describing the change of base for prealgebras, as $X-$prealgebras can be seen as coalgebras over a comonad induced by $X$, see details in Appendix~\ref{comonads}. 
Moreover in case $f$ is a quasi-isomorphism, it is shown there that $f_!$ and $f^*$ are inverse to each other up to quasi-isomorphism.

The definitions given above have evident duals, which we briefly summarize as follows.

\subsection{Precooperads and $\infty$-cooperads}\label{precoop}
A precooperad is a covariant functor (copresheaf) $Y: \AA \to Ch$, again equipped with structure maps 
$$\theta : Y(S\circ_e T) \to Y(S) \otimes Y(T)$$
satisfying naturality and associativity conditions. 
We will always assume that $Y(\eta)$ is $k$ and $\theta$ is the canonical isomorphism whenever $S$ or $T$ is $\eta$.
Such a precooperad is called an $\infty$-cooperad if all structure maps are quasi-isomorphisms. 
With the evident structure preserving morphisms, this defines categories 
$$Precoop\ \text{ and }  \ \infty Coop$$
of precooperads and $\infty$-cooperads.

\subsection{Coalgebras}
A precoalgebra over a precooperad $Y$ is a copresheaf $N: \RR \to Ch$ equipped with structure maps, one for each pruning map $\alpha : S\to T$ in $\RR$,
$$\nabla=\nabla^\alpha : N(T) \to Y(S) \otimes N(T/ S)$$
These morphisms are again required to satisfy naturality, unitality and associativity conditions. Unlike the case of algebras, these do not determine $\nabla^\alpha$ for any $\alpha:S \to T$ in $\RR$.
A precoalgebra $N$ over an $\infty$-cooperad $Y$ is called an $\infty$-coalgebra if for each pruning the structure map  $\nabla^\alpha$ is a quasi-isomorphism.
With the evident structure preserving morphisms, these give categories 
$$Precoalg(Y) \ \text{ and }  \  \infty Coalg(Y)$$
of precoalgebras and $\infty$-coalgebras over $Y$.
As for prealgebras, any morphism $f: Y \to Y'$ of precooperads again gives an evident pushforward functor 
$$f_! : Precoalg(Y) \to Precoalg(Y').$$

\subsection{Examples}\label{2.7}
As for operads and their algebras, a ``classical'' cooperad $C$ yields an $\infty$-cooperad $NC$. If $C$ is given by coordinate-free decomposition maps 
$$C(G) \to C(F) \otimes \sbigotimes_{f\in F} C(G_f)$$
for maps $G\to F$ of finite sets, then $NC$ can be defined as 
$$NC(T)=\sbigotimes_v C(in \ v),$$
dual to the case for operads.
The decomposition maps of $C$ make $NC$ into a covariant functor on $\AA$, and the structure maps $\theta$ are isomorphisms. Similarly, if $E$ is a coalgebra over $C$ with structure maps $E\to C(F) \otimes E^{\otimes F}$ for finite sets $F$, then we  can define an $\infty$-coalgebra $N(C,E)$ over $NC$ by
$$N(C,E)(T)=NC(T) \otimes \sbigotimes_\ell E$$
where $\ell$ ranges over the leaves of $T$.

Notice that every $\infty$-cooperad $Y$ is quasi-isomorphic to a strict cooperad. Indeed, let $C_Y$ be the cooperad defined by $C_Y(n)=Y(C_n)$ where $C_n$ is the corolla with $n$ leaves.
Then the structure maps $\theta:Y(S \circ T) \to Y(S) \otimes Y(T)$ and the copresheaf structure maps for $C_{k+\ell-1} \to C_k \circ_i C_\ell$ (where $i=1,\ldots,k$) together induce  
$$C_Y(k+\ell-1)=Y(C_{k+\ell-1}) \to Y(C_k \circ C_\ell) \to Y (C_k) \otimes Y(C_\ell) = C_Y(k) \otimes C_Y(\ell)$$ 
and make $C_Y$ into a strict cooperad. The $\infty$-cooperad structure of $Y$ yields a map $Y \isomap N C_Y$, showing that $Y$ is equivalent to the strict cooperad $C_Y$.

Similarly, if $E$ is an $\infty$-coalgebra over $Y$, the complex $E(\eta)$ has the structure of a (strict) coalgebra over the cooperad $C_Y$ by the maps
$$E(\eta) \to E(C_n) \stackrel{\nabla}{\to} Y(C_n) \otimes E(\eta)^{\otimes \lambda C_n} = C_Y(n)  \otimes E^{\otimes n}.$$
Here the first map is given by the copresheaf structure of $E$ for the root inclusion $\eta \to C_n$, and the second one is given by the $\infty$-coalgebra structure of $E$ for the identity maps $C_n\to C_n$.


\section{Review of Koszul duality for $\infty$-operads}\label{Section:Operads}

In this section, we will review the bar and cobar constructions from~\cite{HM21}. These constructions involve a functor 
$$\Bar: Psh(\AA) \to CoPsh(\AA)$$
and its left adjoint
$$\coBar: CoPsh(\AA) \to Psh(\AA).$$
The properties of these functors proved in~\cite{HM21} are the following.
\begin{theorem}\label{OperadicTheorem}
 \begin{enumerate}
 \item[(i)] The functors $\Bar$ and $\coBar$ are quasi-inverse to each other. In other words, each counit $\coBar \Bar X \to X$ is a quasi-isomorphism, as is each unit $Y \to \Bar \coBar Y$.
 \item[(ii)] The functor $\Bar$ sends preoperads (resp. $\infty$-operads) to precooperads (resp. $\infty$-cooperads), and vice versa for $\coBar$.
 \item[(iii)] The adjunction lift to adjunctions
 $$ \coBar :  PreCoop  \rightleftarrows PreOp : \Bar$$
 $$ \coBar :  \infty Coop \rightleftarrows  \infty Op : \Bar$$
  modulo suspension.
\end{enumerate}
\end{theorem}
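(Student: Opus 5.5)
The plan is to write $\Bar$ and $\coBar$ explicitly and verify (i)--(iii) by direct combinatorics of inner faces in $\AA$. For a presheaf $X$ on $\AA$ and a tree $T$, I will set
$$\Bar(X)(T)=\bigoplus_{[\alpha: T\mono S]} X(S)\otimes k[E(S)\setminus \alpha E(T)],$$
where the sum runs over isomorphism classes of inner faces out of $T$. The factor $k[E]$ is the one-dimensional determinant line of the set $E$ of newly created inner edges, placed in degree $|E|$; this is the suspension that produces the ``modulo suspension'' in (iii). The differential is the internal one of $X$ plus a signed sum, over $e\in E(S)\setminus\alpha E(T)$, of the maps $\partial_e^*: X(S)\to X(S/e)$ contracting $e$. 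Covariance of $\Bar(X)$ in $T$ comes from precomposition: an inner face $T'\mono T$ followed by $\alpha$ gives a summand of $\Bar(X)(T')$. Dually, for a copresheaf $Y$ I will set
$$\coBar(Y)(T)=\prod_{[S\mono T]} Y(S)\otimes k[E(T)\setminus E(S)]^{\vee}.$$
Both sums are finite, since a tree has only finitely many inner faces up to isomorphism, so there are no convergence issues. The adjunction $\Hom(\coBar Y,X)\cong\Hom(Y,\Bar X)$ follows because both sides are identified with families of maps $Y(S)\to X(T)$, indexed by inner faces $S\mono T$ and of the appropriate degree, that satisfy one and the same twisting (Maurer--Cartan type) equation.

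For (i), I will compute $\coBar\Bar X(T)$ as a sum over chains $R\mono T\mono S$, or rather over pairs of inner faces $R\mono S$ together with a factorisation through the given tree. I then filter by the number of inner edges of the tree carrying the $X$-factor. On the associated graded, for fixed $X(S)$, the remaining complex is the augmented chain complex of the Boolean lattice of subsets of $E(S)\setminus E(R)$: choosing the intermediate tree amounts to choosing which edges to keep. With the determinant signs, this complex is acyclic unless that set is empty. What survives is exactly the $R=S$ term, which the counit maps isomorphically onto $X$. Because the filtration is finite in each tree, the spectral sequence argument shows that the counit is a quasi-isomorphism, and the dual argument handles the unit $Y\to\Bar\coBar Y$.

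For (ii), I use that an inner face $S\circ_e T\mono R$ keeps $e$ as an inner edge of $R$, so $R=R_1\circ_e R_2$ with $S\mono R_1$ and $T\mono R_2$. The new edges split as a disjoint union, so the determinant lines factor as $k[E_1]\otimes k[E_2]$ up to a sign. Hence $\theta^X$ for $R$ induces $\theta:\Bar X(S\circ_e T)\to \Bar X(S)\otimes\Bar X(T)$. I will check naturality and associativity summand by summand, choosing the orientation sign conventions so that the Koszul signs commute with the differential. For the quasi-isomorphism statement, I filter both sides by the number of new edges. On the associated graded, the map is a direct sum of the maps $\theta^X$, which are quasi-isomorphisms by hypothesis, and finiteness gives convergence. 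The argument for $\coBar$ is the same, with products over finitely many terms.

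For (iii), I check that the unit and counit constructed in (i) commute with the maps $\theta$. This reduces to the splitting $R=R_1\circ_e R_2$ already used in (ii). The same splitting shows that a morphism $Y\to\Bar X$ of precooperads corresponds to a map $\coBar Y\to X$ of preoperads, which gives the bijection on hom-sets.

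I expect the main obstacle to be the sign bookkeeping. The determinant lines must be oriented so that three things hold at once: $d^2=0$, the maps $\theta$ are chain maps, and the Boolean-lattice complexes are genuinely acyclic. This is where the suspension in (iii) enters. I would first fix an orientation convention on $\det(E)$ compatible with the grafting decomposition, and then verify the remaining identities locally, one contracted edge at a time.
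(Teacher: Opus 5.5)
You have the right overall plan: explicit formulas, acyclicity of the complex of intermediate trees for (i), the splitting $R=R_1\circ_e R_2$ for (ii)--(iii), and a filtration by number of edges for the $\infty$-statements. This is the method behind Section~\ref{Section:Operads} and the proof of the $\RR$-analogue, Theorem~\ref{Th:dualite}. But your cobar construction is the wrong one, and with it (i) and the adjunction fail.

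\textbf{The cobar construction.} You index $\coBar(Y)(T)$ by inner faces $S\mono T$ \emph{into} $T$ and evaluate $Y$ at the source. It must be indexed by inner faces \emph{out of} the tree, with $Y$ evaluated at the target:
$\coBar(Y)(S)=\big(\bigoplus_{\alpha:S\to T}Y(T)\otimes k[en_T]\big)_{coinv}$, with $\beta^*(\alpha,y\otimes e)=(\alpha\beta,y\otimes e)$.
This is a ``free'' presheaf, so a map $\varphi:\coBar Y\to X$ is determined by its components $\varphi_T(1_T,-)$. Likewise a map $\psi:Y\to\Bar X$ is determined by $\psi_T(-)_{1_T}$. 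That is why the bijection of Section~\ref{defphipsi} holds: on both sides the data is one twisting family $Y(T)\otimes k[en_T]\to X(T)$, indexed by objects $T$ rather than by faces.

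With your definition, a corolla has only invertible inner faces into it. Hence $\coBar(Y)(C_n)=Y(C_n)$, and so $\coBar\Bar X(C_n)=\Bar X(C_n)$, whose homology is the dendroidal homology $DH^{(n)}_*(X)$, not $H_*X(C_n)$. Concretely, for $X$ the constant presheaf $k$ (the nerve of the commutative operad) and $n=3$, $\Bar X(C_3)$ is the complex $k^3\to k$, with homology $k^2$ in degree $1$, while $X(C_3)=k$ in degree $0$. So the counit cannot be a quasi-isomorphism.

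Similarly, your $\coBar(Y)(S_1\circ_e S_2)$ contains summands in which $e$ has been contracted. These have no counterpart in $\coBar(Y)(S_1)\otimes\coBar(Y)(S_2)$, so your argument for (ii) does not apply to $\coBar$.

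Your description of the counit computation ("chains $R\mono T\mono S$" with the given tree in the middle) also does not follow from your own definitions. With the correct definitions, both $\coBar\Bar X(R)$ and $\Bar\coBar Y(R)$ are sums over chains $R\mono S\mono T$ with the \emph{given} tree $R$ at the bottom. The Boolean lattice you want consists of the intermediate trees $S$ for fixed $R\mono T$, i.e.\ subsets of $E(T)\setminus E(R)$. It is contracted by adding a fixed edge $a\in E(T)\setminus E(R)$, exactly as the homotopy $h_a$ in the proof of Theorem~\ref{Th:dualite}.

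\textbf{The determinant convention.} This is a smaller, fixable problem. Suppose the summand of $\alpha:T\mono S$ carries only $\det(E(S)\setminus \alpha E(T))$, and take a non-invertible $\beta:T'\mono T$. The summand of $\alpha\beta$ in $\Bar X(T')$ carries $\det(E(S)\setminus E(T'))$, while the summand of $\alpha$ in $\Bar X(T)$ carries $\det(E(S)\setminus E(T))$. So $\beta_*$ has degree $-|E(T)\setminus E(T')|$ and depends on a choice of orientation, which means $\Bar X$ is not a copresheaf of chain complexes.

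The paper instead enumerates \emph{all} inner edges of the target tree ($k[en_T]$). This makes $\beta_*$ and $\beta^*$ degree $0$ and sign-free. The price is that the decomposition map $\Delta^a$ drops the grafting edge $a$ and so has degree $-1$. That degree shift, not the determinant lines themselves, is the ``modulo suspension'' in (iii); it is handled by passing to $\Bar(sX)$ and $\coBar(s^{-1}Y)$.

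Once both definitions are corrected, the rest of your plan goes through along the lines you describe.
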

We will now review the definitions involved.
Note that the sign and degree conventions slightly differ from the article~\cite{HM21}. They are now more in line with the Koszul sign rules.
 
 \subsection{The bar complex of a presheaf}\label{BarOperadInv}
 Let $X$ be a presheaf on $\AA$ (always with values in $Ch$).
 The \emph{bar complex} $\Bar X$ of $X$ is the copresheaf $\Bar X : \AA \to Ch$ defined on a tree $S$ by
 $$\Bar (X)(S)= \Big( \prod_{\alpha:S\to T} \Hom(k[en_T],X(T))\Big)^{inv},$$
 where $\alpha$ ranges over the finite set of morphisms in $\AA$ with domain $S$, and $en_T$ is the set of enumerations of inner edges of the tree $T$. We will denote such an enumeration
 $$e=(e_1, \ldots, e_p)$$
 where each $e_i$ is taken to have degree $-1$.
So $e$ has degree $-p$, and we usually write just $e$ in contexts like $(-1)^e$. The invariants are taken for permutations of these enumerations and isomorphism $T \isomap T'$ under $S$. Thus, an element $\omega \in \Bar(X)(S)$ homogeneous of degree $n$ can be written as 
$$\omega=(\omega_\alpha)_{\alpha: S\to T}$$
where $\omega_\alpha$ assigns to each enumeration $e=(e_1, \ldots, e_p)$ an element $\omega_\alpha(e)$ in $X(T)$ of degree $n-p$. Invariance means that, first,
$$\omega_\alpha(e \cdot \sigma)=(-1)^\sigma \omega_\alpha(e)$$
for any permutation $\sigma \in \Sigma_p$, where $(-1)^\sigma$ is the sign of $\sigma$ ; and secondly, for $S \stackrel{\alpha}{\to} T \stackrel{\gamma}{\to} T'$ where $\gamma$ is an isomorphism,
$$\gamma^*\omega_{\gamma\alpha}(\gamma e)=\omega_\alpha(e).$$
Here $\gamma e$ is the induced enumeration of the inner edges of $T'$, and $\gamma^* :X(T')\to X(T)$ is given by the presheaf structure on $X$.

The differential on $\Bar(X)(S)$ is defined for $\alpha : S \to T$ by
$$(\partial \omega)_\alpha (e) =
\partial_X \omega_\alpha (e) + (-1)^\omega \int_d d^* \omega_{d \circ \alpha} (de).$$
Here $\partial_X$ is the differential of $X(T)$, and the integral is the ``weighted'' sum over all morphisms $d:T\to T'$ in $\AA$ where $T'$ has exactly one more inner edge than $T$. This morphism as well as the new inner edge are denoted by $d$. The ``weighted'' sum involves averaging over isomorphism classes of such morphisms $T \stackrel{d}{\to} T'$ (for $T$ fixed), and is a way of avoiding the unnatural choice of representatives for isomorphism classes, cf. Appendix of~\cite{HM21}.
Furthermore, we have written $(-1)^\omega$ as short for $(-1)^{degree(\omega)}$, as elsewhere in this paper.

The copresheaf structure on $\Bar X$ is simply given by composition in $\AA$. So for $\beta:R\to S$ and $\omega \in \Bar(X)(R)$, one defines $\beta_*(\omega)$  on $\alpha:S\to T$ by
$$\beta_*(\omega)_\alpha=\omega_{\alpha \beta}.$$
It is straightforward to check that $\partial \partial=0$ and that $\beta_*$ preserves the differential.

It will sometimes be more convenient to use a different notation for $\Bar X$. Notice that
$$\prod_\alpha \Hom(k[en_T],X(T)) \cong \prod_{\alpha,e} X(T)[e]$$
where the product on the right is over pairs $\alpha:S \to T$ and enumerations $e$ of inner edges in $T$. The shift $X(T)[e]$ is
$$X(T)[e]_n=X(T)_{n-p}$$
where $e$ also stands for $p$ if $e=(e_1, \ldots, e_p)$. So we can also write 
$$\Bar(X)(S) = \Big(\prod_{\alpha, e} X(T)[e]\Big)^{inv}.$$
%
%

Notice that the construction $\Bar(X)$ is obviously functorial in $X$. Moreover if $X \to X'$ is a quasi-isomorphism, then so is $\Bar(X) \to \Bar(X')$.

\subsection{The bar complex by coinvariants}\label{BarOperadCoinv}
We shall need a description of $\Bar X$ in terms of coinvariants, based on the canonical isomorphism
$$\rho : \bigoplus_{\alpha,e} X(T)[e] \to \prod_{\alpha,e} X(T)[e]$$
which induces an isomorphism from coinvariants to invariants
\begin{equation}\tag{$\star \star$}\label{Eq.rho1}
\rho : \Big(\bigoplus_{\alpha,e} X(T)[e]\Big)_{coinv} \to \Big(\prod_{\alpha,e} X(T)[e]\Big)^{inv}.
\end{equation}
We shall write elements on the left as triples $(\alpha,e,x)$, or sometimes simply as $(e,x)$ when $\alpha$ is understood. Such an element $(e,x)$ has degree $e+x$ (i.e. the sum of the length of $e$ and of the degree of $x$).
The isomorphism $\rho$ is completely determined by invariance together with $\rho(\alpha,e,x)_\alpha(e)=x$ and $\rho(\alpha,e,x)_\beta(f)=0$ if $\beta$ is not isomorphic to $\alpha$.
We shall write $\rho$ as an isomorphism 
$$\rho: \bar\Bar(X) \to \Bar(X)$$
where
$$\bar\Bar(X)(S)=\Big(\bigoplus_{\alpha,e} X(T)[e]\Big)_{coinv}$$
as in (\ref{Eq.rho1}), and refer to $\bar\Bar(X)$ as ``the bar complex of $X$ in terms of coinvariants''.
Transporting the differential and the presheaf structure along $\rho$ gives the following formulas for $\bBar(X)$:
For $\beta:R\to S$ the map $\beta_*:\Bar(X)(R)\to \Bar(X)(S)$ is given by
$$\beta_*(\gamma,e,x)= \left\{
\begin{array}{ll}
 (\alpha,e,x) & \text{ if } \gamma=\alpha\beta\\
 0 & \text{ otherwise.}
\end{array}
\right.
$$
(If $\gamma:R \to T$ factors through $\beta$ as $\gamma=\alpha \beta$ then $\alpha$ is unique.) The differential $\bar\partial$ on $\bBar(X)(S)$ is given by
$$\bar\partial(\alpha,e,x)=(\alpha,e,\partial_X (x))
+(-1)^{e+x} \sum_{i \notin im(\alpha)} (-1)^{i-1} (e_i^{-1}\alpha, \partial_ie,e_i^*x).$$
Here $\partial_ie=(e_1, \ldots, \hat{e_i}, \ldots, e_p)$ if $e=(e_1, \ldots, e_p)$, and we have used the notation
\begin{equation*}
\xymatrix@M=8pt{
 S\ar[dr]_{\alpha}\ar[r]^{e_i^{-1}\alpha} & \partial_{e_i}T \ar[d]^{e_i}\\
 &  T  \\}
\end{equation*}  
for the factorisation of $\alpha$ through the tree $\partial_{e_i}T$ obtained from $T$ by contracting the edge $e_i$. Such a factorisation exists and is unique if the edge $e_i$ does not belong to the image of $\alpha$.

\bigskip

Next, we will briefly review the dual construction.

\subsection{The cobar complex of a copresheaf}
Let $Y: \AA \to Ch$ be a copresheaf on $\AA$. We define for each tree $S$ a complex
$$\coBar(Y)(S)=\Big(\bigoplus_{\alpha: S\to T} Y(T) \otimes k[en_T]\Big)_{coinv}$$
with coinvariants as before. The differential on $\coBar(Y)$ is defined by
$$\partial (\alpha,y \otimes e) = (\alpha,\partial_Y y \otimes e) +
(-1)^y \int_d (d \circ \alpha, (\partial_d)_*y \otimes de)$$
where $d$ runs over morphisms $T \stackrel{d}{\to}T'$ in $\AA$ adding exactly one inner edge, again called $d$ (as before). The degree of $y \otimes e$ in the summand for $\alpha$ is $y-e$. The presheaf structure is simply given by composition: For $R \stackrel{\beta}{\to} S$ and $S \stackrel{\alpha}{\to}T$, 
$$\beta^*(\alpha, y \otimes e)=(\alpha \beta, y \otimes e).$$
This clearly respects the differential. Just like for the bar complex $\Bar$, the cobar complex $\coBar(Y)$ is functorial in $Y$ and the functor $\coBar$ preserves the quasi-isomorphisms.

\subsection{Adjointness}\label{defphipsi}
The functor $\coBar$ is left adjoint to $\Bar$.
Explicitly, for a presheaf $X$ and a copresheaf $Y$, there is a bijective correspondence between morphisms
$$ \coBar Y \stackrel{\varphi}{\to}X \quad \text{ and } \quad Y \stackrel{\psi}{\to} \Bar X $$
of which we will need an explicit description later.
Given $\varphi$, one defines $\psi$ for $y \in Y(S)$ and $S\stackrel{\alpha}{\to}T$ by
$$\psi_S(y)_\alpha(e)=(-1)^e\varphi_T(1_T, \alpha_*(y) \otimes e). $$
And conversely, given $\psi$, one defines $\varphi$ for $S\stackrel{\alpha}{\to}T$ and $y\otimes e$ by
$$\varphi_S(\alpha,y\otimes e)=(-1)^e\alpha^*\psi_T(y)_{1_T}(e).$$
Given $\varphi$, the corresponding $\psi$ can also be written in terms of coinvariants as
$$\bar \psi : Y \to \bBar X,$$
$$\bar \psi_S(y)=\int_{\alpha,e}(-1)^e(\alpha,e, \varphi_T (1_T,\alpha_*(y),e)).$$
One easily checks that $\varphi$ preserves the differential if and only if $\psi$ does, that $\varphi$ is natural if and only if $\psi$ is, and that the passages from $\varphi$ to $\psi$ and back are mutually inverse (see Appendix~\ref{AppendixSigns} for some details).

\subsection{The bar complex of a preoperad}\label{barcomplexpreoperad}
Suppose $X$ has the structure of a preoperad, given by maps
$$\theta_a=\theta_a^{S_1,S_2} : X (S_1 \circ_a S_2) \to X(S_1) \otimes X(S_2).$$
Then $\Bar(sX)$ carries the structure of a precooperad, where $sX$ is the suspension of $X$ with elements denoted $sx \in (sX)_n$ for $x \in X_{n-1}$.
We wish to give an explicit description of this structure. First, we will describe how $\theta$ induces a natural map $\Delta^a$ as in
\begin{equation*}
\xymatrix@M=8pt{
 \displaystyle{\prod_{\alpha,e}}X(T)[e]\ar[drrr]_{\Delta'}\ar@{-->}[rrr]^{\Delta^a} &&&
 \displaystyle{\prod_{\alpha_1,e_1}}X(T_1)[e_1] \otimes \displaystyle{\prod_{\alpha_2,e_2}}X(T_2)[e_2] \ar[d]_{\mu}^\wr\\
 &&&  \displaystyle{\prod_{\alpha_1, \alpha_2,e_1,e_2}}X(T_1)\otimes X(T_2)[e_1+e_2]  \\}
\end{equation*}  
Here the grafting $S_1 \circ_a S_2$ induces a bijection between extensions $\alpha : S \to T$ and pairs of such $\alpha_1 : S_1 \to T_1$ and $\alpha_2 : S_2 \to T_2$ where $T=T_1 \circ_a T_2$.
If $ae$ is an enumeration of the inner edges of $T$ where we put the grafting edge $a$ up front, the corresponding enumerations of $T_1$ and $T_2$ are denoted $e_1$ and $e_2$. The isomorphism $\mu$ is given by the usual Koszul sign
$$\mu(\omega_1 \otimes \omega_2)(e_1,e_2)=(-1)^{e_1 \omega_2}\omega_1(e_1) \otimes \omega_2(e_2)$$
where we have suppressed the $\alpha$'s and have written $e_1\omega_2$ for the product of the corresponding degrees, as usual.
The map $\Delta'$ is defined by
$$\Delta'(\omega)(e_1,e_2)=(-1)^\omega \theta_a(\omega(ae_1e_2))$$
where $\theta_a$ is the preoperad structure map of $X$ for the grafting $T=T_1 \circ_a T_2$.
Notice that $\Delta'$ is of degree $-1$ (due to dropping the edge $a$).
This map $\Delta^a$ is natural and well-defined on invariants, so defines a map
$$\Delta^a : \Bar(X)(S) \to \Bar(X)(S_1) \otimes \Bar(X)(S_2)$$
of degree -1.
To check that $\Delta^a$ is coassociative and respects the differential, it is convenient to give a description of the corresponding map on coinvariants, denoted 
$$\bar\Delta^a : \bBar(X)(S) \to \bBar(X)(S_1) \otimes \bBar(X)(S_2).$$
To this end, consider the diagram
\begin{equation*}
\hspace{-2cm}
\xymatrix@M=10pt@R=26pt{
 \displaystyle{\prod_{\alpha,e}}X(T)[e]\ar[r]^{\Delta^a\quad \quad \quad } &
 \displaystyle{\prod_{\alpha_1,e_1}}X(T_1)[e_1] \otimes \displaystyle{\prod_{\alpha_2,e_2}}X(T_2)[e_2] \ar[r]^{\mu}_\sim
 &
 \displaystyle{\prod_{\alpha_1, \alpha_2,e_1,e_2}}X(T_1)\otimes X(T_2)[e_1+e_2] 
 \\
 \displaystyle{\bigoplus_{\alpha,e}}X(T)[e]\ar@{-->}[r]^{\bar\Delta^a \quad \quad \quad }\ar[u]_{\rho} &
 \displaystyle{\bigoplus_{\alpha_1,e_1}}X(T_1)[e_1] \otimes \displaystyle{\bigoplus_{\alpha_2,e_2}}X(T_2)[e_2] \ar[r]^{\bar \mu}_\sim \ar[u]_{\rho \otimes \rho}
 &
 \displaystyle{\bigoplus_{\alpha_1, \alpha_2,e_1,e_2}}X(T_1)\otimes X(T_2)[e_1+e_2] \ar[u]_{\rho}
 \\}
\end{equation*}  
where $\rho$ is the standard isomorphism, and the isomorphism $\bar \mu$ is the one making the right hand square commute, so
$$\bar\mu ((e_1,x_1) \otimes (e_2,x_2)) =
(-1)^{e_1(e_2+x_2)} (e_1, e_2,x_1 \otimes x_2)$$
(We have not specified (co)invariants in the above diagrams in order to save space.)
Then one finds the following formula for $\bar \Delta^a$ making the left hand square commute,
$$\bar\Delta^a(e,x)=
(-1)^{e+x+e_1(e_2+x_2)} ((e_1,x_1) \otimes (e_2,x_2)),$$
with $e+x$ in the sign coming from $\Delta'$ and $e_1(e_2+x_2)$ from $\bar\mu$, and where we have used the notation $\theta_a(x)=x_1 \otimes x_2$ (recall that we keep the sum implicit). Here we assume $e$ is given in the order $ae_1e_2$.
With these formulas, it is now straightforward to check that $\bar \Delta^a$ is coassociative, and that $\bar \partial$ is a coderivation with respect to $\bar \Delta^a$, as in
$$\bar \Delta^a \bar \partial=-(\bar \partial \otimes 1 + 1 \otimes \bar \partial) \bar \Delta^a,$$
where the minus sign comes from $\bar \Delta^a$ being of degree $-1$.
(See the appendix for a verification.)
After suspending,  we obtain a coassociative coproduct of degree $0$,
$$\bar\Delta^a : \bBar(sX)(S) \to \bBar(sX)(S_1) \otimes \bBar(sX)(S_2)$$
for which $\bar\partial$ is a coderivation, showing that $\bBar(sX)$ has the structure of a precooperad. It is clear that $\bBar(sX)$ is in fact an $\infty$-cooperad if $X$ is an $\infty$-operad and a strict cooperad if $X$ is a strict operad.

\subsection{The cobar complex of a precooperad}

Suppose the copresheaf $$Y: \AA \to \Ch$$ has the additional structure of a precooperad given by maps
$$\theta_a : Y(S) \to Y(S_1) \otimes Y(S_2)$$
for each grafting $S=S_1 \circ_1 S_2$, as in Section~\ref{precoop}. This induces a coproduct
$$\nabla=\nabla_a: \coBar(Y)(S) \to \coBar(Y)(S_1) \otimes \coBar(Y)(S_2),$$
this time of degree $+1$. 
For an element $(\alpha, y, e)$ in $\coBar(Y)(S)$, where $\alpha: S\to T, y \in Y(T)$ and $e$ an enumeration of $T$, taken in the order $e=ae_1e_2$ as before,
$$\nabla(\alpha,y,e)=(-1)^{y+e_1y_2} (\alpha_1, y_1, e_1) \otimes (\alpha_2, y_2, e_2).$$
The sign here is the usual Koszul sign, where we view $\nabla$ as the composition
$$\nabla=tw \circ (\theta_a  \otimes id) \circ (id \otimes drop_a)$$
where $drop_a$ maps $e=ae_1e_2$ to $e_1e_2$, while $\theta_a$ maps $y$ to $y_1 \otimes y_2$ and $tw$ shuffles $(y_1\otimes y_2, e_1e_2)$ to $(y_1, e_1) \otimes (y_2, e_2)$ (where we again suppress $\alpha, \alpha_1, \alpha_2$).
As the degree of $(\alpha,y,e)$ in $\coBar(Y)$ is $y-e$, the map $\nabla$ is of degree $+1$. It can again be checked that $\nabla$ is natural in $S$, (graded) coassociative, and that the differential
$$\partial(\alpha, y, e)= (\alpha, \partial_Y y, e)+(-1)^y \int_d
(d \circ \alpha, d_*y, de)$$
is a graded coderivation, i.e.
$$\nabla \circ \partial = - (\partial \otimes 1 + 1 \otimes \partial) \circ \nabla.$$
Thus, if $Y$ is a precooperad, the cobar construction $\coBar(s^{-1}Y)$ of the desuspension $s^{-1}Y$ will have a coproduct $\nabla : \coBar(s^{-1}Y)(S) \to \coBar(s^{-1}Y)(S_1) \otimes \coBar(s^{-1}Y)(S_2)$ of degree $0$ and makes $\coBar(s^{-1}Y)$ into a preoperad. It is evident from the definition of $\nabla$ that $\coBar(s^{-1}Y)$ is in fact an $\infty$-operad if $Y$ is an $\infty$-cooperad, and a strict operad if $Y$ is a strict cooperad.

\subsection{Adjointness and structures}
The bijective correspondence between maps of presheaves ${\coBar Y \stackrel{\varphi}{\to}X}$ and maps of copresheaves
$ Y \stackrel{\psi}{\to}\Bar X$ of Section~\ref{defphipsi} now restricts to preoperads and precooperads (or $\infty$-operads and $\infty$-cooperads, respectively).
More precisely, if $(X, \theta_X)$ is a preoperad and $(Y, \theta_Y)$ is a precooperad, and $\varphi : \coBar Y \to X$ and $\psi: Y \to \Bar X$ are maps of degree $-1$ that correspond under the adjunction, then 
$$(\varphi \otimes \varphi) \nabla=-\theta_X \varphi \quad \quad \text{ if and only if } \quad \quad (\psi \otimes \psi) \theta_Y = - \Delta \psi$$
(see Appendix~\ref{AppendixSigns}).
So if the maps $\coBar(s^{-1}Y) \stackrel{\varphi'}{\to}X$ and $Y \stackrel{\psi'}{\to} \Bar(sX)$ of degree $0$ correspond under the adjunction, then applying the equivalence to $\varphi = \varphi' \circ (\coBar s^{-1})$ and $\psi = (\Bar s)^{-1} \circ \psi'$ shows that $\varphi'$ is a map of preoperads if and only if $\psi'$ is one of precooperads. This gives the adjunction as stated in the beginning of this section.

\subsection{Remark (Strictification)}\label{Strictification}

Recall from Section~\ref{2.7} that every $\infty$-cooperad $Y$ is quasi-isomorphic to a strict one that we denoted $C_Y$.
Since the functor $\coBar$ preserves quasi-isormorphisms, we deduce from Theorem~\ref{OperadicTheorem} that every $\infty$-operad is quasi-isomorphic to a strict one by considering the zig-zag
$$X\stackrel{\sim}{\longleftarrow} \coBar(s^{-1} \Bar (sX)) \stackrel{\sim}{\longrightarrow} \coBar(s^{-1}C_{\Bar(sX)}).$$

%

\section{The bar-cobar adjunction for presheaves on $\RR$}\label{AlgBarETCobar}

Recall from Section~\ref{defalgebras} that algebras over a preoperad $X$ are defined as presheaves on $\RR$ with structure maps relating these to $X$.
Dually, coalgebras over a precooperad $Y$ are copresheaves on $\RR$ with such structure maps. In this section, we will again define bar and cobar functors $\Bar$ and $\coBar$ and prove that these are adjoint.
In a next section, we will show that they are in fact mutually inverse up to quasi-isomorphism.

\subsection{The bar complex of a presheaf}
Let $M$ be a presheaf on $\RR$. For an object $S$ of $\RR$, we define
$$\Bar(M)(S)=\Big(\prodc_{\alpha:S\to T} \Hom(k[en^+_T], M(T))\Big)^{inv}.$$
Here $\alpha:S \to T$ ranges over morphisms in $\RR$. An important difference with the bar construction for presheaves on $\AA$ (as in Section~\ref{BarOperadInv}) is that for a fixed $S$ it is no longer a finite set. The superscript $c$ on the product indicates that we take the subspace of the product consisting of functions with finite support only. The set of enumerations of the root and inner edges  of $T$ is denoted by $en^+_T$ (the root now has to be considered in addition to the inner edges, as the external root face of a corolla is fundamental in the algebra structure). By definition, $en^+(\eta)$ is the empty set (that is the unique edge of $\eta$ has to be thought of as a leaf in this context).
So an element $\omega$ of $\Bar(M)(S)$ assigns to each $\alpha:S \to T$ and each enumeration $e=(e_1, \ldots, e_p)$ an element $\omega_\alpha(e)$ or just $\omega(e)$ in $M(T)$.
Moreover, $\omega_\alpha(e)=0$ for all but finitely many $\alpha$'s. 
Such an element $\omega$ is required to be invariant under permutations of the enumeration $e$ and under isomorphisms under $S$, as before. Explicitly, 
$$\omega_\alpha(e\sigma) = (-1)^\sigma \omega_\alpha(e) $$
for any permutation $\sigma \in \Sigma_{p}$, and
$$\omega_\alpha(e)  = \gamma^* \omega_{\gamma  \alpha}(\gamma e)$$
for any isomorphism $\gamma : T \to T'$.
An enumeration $e=(e_1, \ldots, e_p)$ is taken to be of degree $-p$, sometimes abusively denoted by $-e$. So if $\omega$ is of degree $n$, it sends $\alpha$ and $e$ to $\omega_\alpha( e) \in M(T)_{n-p}$. 
We will also write
$$\Bar(M)(S)= \Big( \prodc_{\alpha,e}M(T)[e] \Big)^{inv}.$$
To describe the differential, recall first that an inner edge $a$ of $T$ defines a map $\partial_a : \partial_a T \mono T$ in $\AA$ (and hence in $\RR$), sometimes also simply denoted $a: \partial_a T \mono T$. And a top vertex $v$ in $T$ defines a map $\partial_v : \partial_v T \to T$ in $\RR$ where $\partial_v T$ is the tree obtained from $T$ by chopping off the vertex $v$. Now for $\omega \in \Bar(M)(S)$ the differential is given for $\alpha: S \to T$ and an enumeration $e$ by
$$(\partial \omega)_\alpha (e) = \partial \omega_\alpha(e) +(-1)^\omega \int_{d} \partial_d^* \omega_{d \alpha} (de) - (-1)^\omega \int_{v \ top} \partial_v^* \omega_{d \alpha} (de).$$
This formula resembles the one for preoperads, but there are some important differences. 
In the first integral, $d$ ranges over inner morphisms $d:T\to T'$ in $\RR$ that add one more vertex with output edge $d$, so that $T$ is obtained from $T'$ by contracting an inner edge $d$ in $T'$. In the second integral, $v$ ranges over top vertices of $T'$ with output edge $d$ which is a leaf of $T$. So
$T$ is obtained from $T'$ by chopping off this vertex $v$ in $T'$.
The external morphisms  $\partial_v: T \to T'$ of the second kind form an infinite set, so it is essential that we have finite support for the second integral to make sense. This second integral was not present in the preoperadic case, as only inner maps were considered.
One can check that $\partial \partial=0$, making $\Bar(M)(S)$ into a chain complex (it will actually be easier to check this after passing to coinvariants, as described in Section~\ref{BarAlgCoinv}).
The copresheaf structure on $\Bar(M)$ is given by composition:
For $\beta: S \to R$ and $\omega \in \Bar(M)(S)$, we set
$$ \beta_*(\omega)_\alpha(e)=\omega_{\alpha \beta}(e),$$
for $\alpha : R \to T$ and $e \in en^+_T$.
This map $\beta_*: \Bar(M)(S) \to \Bar(M)(R)$ preserves the differential and gives $\Bar(M) : \RR \to \Ch$ the structure of a copresheaf.

\subsection{The bar construction by coinvariants}\label{BarAlgCoinv}
We will need a description of $\Bar(M)$ in terms of coinvariants, as for $\Bar(X)$ in Section~\ref{BarOperadCoinv}.
Write
$$ \bBar (M) (S)= \Big(\bigoplus_{\alpha:S \to T} k[en^+_T]\otimes M(T)\Big)_{coinv}=\Big(\bigoplus_{\alpha,e} M(T)[e]\Big)_{coinv}$$
and $\rho : \bBar(M)(S) \isomap \Bar(M)(S)$ for the canonical isomorphism (remember elements of $\Bar(M)$ have finite support).
In order to describe the way the differential of $\Bar(M)$ transports along $\rho$, it will be convenient to introduce some notation.
Suppose $e=(e_1, \ldots, e_p)$ is an enumeration of the root and the inner edges of a tree $T$.
An element in $\bBar (M) (S)$ is represented by a triple $(\alpha,e,x)$, with $x \in M(T)_n$, of degree $n+e=n+p$.
Let $v_i$ be the vertex immediately above $e_i$.
The presheaf structure of $M$ gives maps 
$$e_i^* : M(T) \to M (\partial_{e_i}T)$$
for $e_i$ an inner edge and if $v_i$ happens to be a top vertex, also
$$v_i^* : M(T) \to M (\partial_{v_i}T).$$
Now define for $\alpha: S \to T, e \in en^+_T$ and $x\in M(T)$ elements $e_i^*(\alpha,e,x)$ and $v_i^*(\alpha,e,x)$ by
\begin{eqnarray*}
 e_i^*(\alpha,e,x)&=&0  \text{ if $e_i$ is an inner edge of $S$ or the root}\\
v_i^*(\alpha,e,x)&=& 0 \text{ unless $v_i$ is a top vertex of $T$ that does not belong to $S$}\\
e_i^*(\alpha,e,x)& = & (e_i^{-1}\alpha, (e_1, \ldots, \hat{e_i}, \ldots, e_p), e_i^*x) \\
& & \, \text{ if $e_i$ is an inner edge of $T$, not belonging to $S$}\\
v_i^*(\alpha,e,x)&=&(v_i^{-1}\alpha, (e_1, \ldots, \hat{e_i}, \ldots, e_p), v_i^*x) \\
& & \, \text{ if $v_i$ is a top vertex of $T$, not belonging to $S$.}
\end{eqnarray*}
Here we have used the notation
\begin{equation*}
\xymatrix@M=10pt{
 S\ar[r]^{e_i^{-1}\alpha} \ar[dr]_{\alpha}& \partial_{e_i} T \ar[d]^{e_i}\\
 & T \\}
\end{equation*}  
for $e_i$ not in $S$, and similarly for $v_i$. 
Now define the differential $\bar \partial$ on $\bBar(M)(S)$ by the formula
$$\bar\partial(\alpha,e,x)=(\alpha, e, \partial_Xx)
+(-1)^{e+x}\sum_{i=1}^p(-1)^{i-1}(e_i^*(\alpha,e,x)-v_i^*(\alpha,e,x))$$
where $\alpha:S\to T$, $e=(e_1, \ldots, e_p)\in en^+_T$ and $x \in M(T)$, as before.
Notice that in the expression after the summation for a fixed $i$ there can be no, one or two non-zero terms: one term if $e_i$ is inner in $T$ and $v_i$ non top in $T$ if in addition $e_i$ does not belong to $S$; one term if $e_i$ is a leaf of $S$ and $v_i$ is a top vertex in $T$; and two terms if $v_i$ is a top vertex in $T$ and $e_i$ does not belong to $S$. 
We will also use the notation
$$\hat \partial_{i}(\alpha,e,x)=e_i^*(\alpha,e,x)-v_i^*(\alpha,e,x)$$

The fact that $\bar\partial$ is a differential on $\bBar(M)(S)$, i.e. that $\bar\partial \bar\partial=0$, can be verified by checking that $\hat \partial_{i} \hat \partial_{j}=\hat \partial_{j}\hat \partial_{i}$.
This latter equation is non-trivial if $e_i$ is an input edge of $v_j$
 as in the picture below,  
\[ 
 \xymatrix@R=12pt@C=14pt{
&&&&& \\
 &&*=0{\ \, \bullet_{v_i}}\ar@{-}[u]\ar@{-}[ul]\ar@{-}[ur]&&\\
&&*=0{\ \, \bullet_{v_j}}\ar@{-}[u]^{e_i}\ar@{-}[ull]\ar@{-}[urr]&\\
&&\ar@{-}[u]^{e_j}&&\\
}
\]
and is then based on the dendroidal identity $\ \partial_{v_j} \partial_{e_i}=\partial_{v_j} \partial_{v_i}$ as in the commutative diagram below.

\begin{equation*}
\xymatrix@M=10pt{
 \partial_{v_j} \partial_{v_i}T\ar[r]^{\partial_{v_j}} \ar@{=}[d] & \partial_{v_i}T \ar[r]^{\partial_{v_i}} & T\ar@{=}[d]\\
\partial_{v_j} \partial_{e_i}T\ar[r]^{\partial_{v_j}}  & \partial_{e_i}T \ar[r]^{\partial_{e_i}} & T}
\end{equation*}  
(Notice that after contracting $e_i$ as in the tree $\partial_{e_i}T$, the vertex $v_j$ above $e_j$ in $\partial_{e_i}T$ is a ``composed vertex'' of the vertices $v_j$ and $v_i$ originally in $T$.)

\subsection{Conilpotent copresheaves}\label{Conilp}
A copresheaf $N : \RR \to \Ch$ is said to be conilpotent if for any object $S$ in $\RR$ and any $y \in N(S)$, there is only a finite set of morphisms $\alpha :S \to T$ for which $\alpha_*(y)$ is non-zero. In other words, for any $y \in N(S)$, $\alpha_*(y)$ becomes zero for morphisms $\alpha : S \to T$ into sufficiently large trees $T$. For example, let $C$ be a cocommutative coalgebra in chain complexes, and let $N(C)$ be its nerve as defined in Section~\ref{Section:Operads}. 
Thus $N(C) : \RR \to \Ch$ is defined by
$$N(C)(S)=\underline k (S) \otimes C^{\otimes \lambda S} = C^{\otimes \lambda S}.$$
Here $\underline k : \RR \to \Ch$ is the constant functor with value $k$ representing the cooperad for cocommutative coalgebras, and $\lambda S$ is the set of leaves of the tree $S$. If $S\to T$ is an inner morphism then $N(C)(S) \to N(C)(T)$ is an isomorphism. But if $S\to T$ is an external face map, ie. a pruning, it involves a non-trivial coproduct $C^{\otimes \lambda S}\to C^{\otimes \lambda T}$. Thus $N(C)$ is a conilpotent copresheaf precisely when $C$ is a conilpotent coalgebra in the usual sense.

We will write
$$CoPsh^c(\RR) \subset CoPsh(\RR)$$
for the full subcategory of conilpotent presheaves on $\RR$.
We notice that the bar construction $\Bar M$ of a presheaf $M$ is conilpotent, so $\Bar$ is a functor $Psh(\RR) \to CoPsh^c(\RR) $.

\subsection{The cobar construction of a conilpotent presheaf}
Let $N : \RR \to \Ch$ be a conilpotent copresheaf. We will now define a cobar presheaf
$$\coBar (N) : \RRop \to \Ch$$
by the formula
$$\coBar(N)(S)=\left(\bigoplus_{\alpha: S\to T} N(T) \otimes k[en^+_T] \right)_{coinv}$$
where $\alpha$ runs over the infinite set of morphisms in $\RR$ with domain $S$, while the degree of a triple $(\alpha, y \otimes e)$ (which we also write $(\alpha,y,e)$) is $y-e$, where $e$ stands here for $p$ if $e=(e_1, \ldots, e_p)$. So we can also write
$$\coBar(N)(S)=\left(\bigoplus_{\alpha,e} N(T)[-e] \right)_{coinv}$$
The coinvariants are taken for permutations of the enumeration $e$ and for isomorphisms under $S$. So for $\sigma \in \Sigma_{p}$,
$$(\alpha, y \otimes e)=(-1)^\sigma(\alpha, y\otimes e\sigma).$$
And for $\gamma : T \isomap T'$,
$$(\alpha, y \otimes e)=(\gamma\alpha, \gamma_* y\otimes \gamma e).$$
The differential is defined by
$$\partial(\alpha, y \otimes e) = (\alpha, \partial_Y(y) \otimes e) +(-1)^y \int_d (d \circ \alpha, (\partial_d)_* y \otimes de)
- (-1)^y \int_{v \ top} (d \circ \alpha, (\partial_v)_* y \otimes de)
$$
where in the first integral $d: T\to T'$ ranges over morphisms in $\AA$ adding exactly one internal edge, and in the second integral, $v$ ranges over top vertices of $T'$ with output edge $d$ which is a leaf of $T$, as for the differential of $\Bar M$ above.
The set of such morphisms $d: T\to T'$ is infinite, and the differential only makes sense because we have assumed that $N$ is conilpotent.
As to the presheaf structure, the map $\beta^* : \coBar(N)(S) \to \coBar(N)(R)$ associated to a morphism $\beta: R \to S$ is defined by
$$\beta^*(\alpha, y \otimes e)=(\alpha \beta, y \otimes e).$$
The differential and presheaf structure are compatible with each other and are well-defined on coinvariants, so as to make $\coBar(N) : \RRop \to \Ch$ into a presheaf.

\begin{prop}\label{adjonctionAlg}
 The bar and cobar functors
 $$\coBar : CoPsh^c(\RR) \rightleftarrows Psh(\RR) : \Bar$$
 are mutually adjoint, with $\coBar$ left adjoint to $\Bar$.
\end{prop}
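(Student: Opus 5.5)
We will mimic the explicit description of the adjunction for presheaves on $\AA$ in Section~\ref{defphipsi}, now with $\alpha$ ranging over morphisms of $\RR$ and enumerations $e \in en^+_T$ including the root. For a presheaf $M$ on $\RR$ and a conilpotent copresheaf $N$, we set up a bijection between morphisms of presheaves $\varphi : \coBar N \to M$ and morphisms of copresheaves $\psi : N \to \Bar M$. Given $\varphi$, we define, for $y\in N(S)$ and $\alpha:S\to T$ in $\RR$,
$$\psi_S(y)_\alpha(e) = (-1)^e \varphi_T(1_T, \alpha_*(y)\otimes e),$$
and given $\psi$, we define
$$\varphi_S(\alpha, y\otimes e) = (-1)^e \alpha^*\psi_T(y)_{1_T}(e).$$

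The first point to check is that these formulas land in the right places. For $\psi$, the family $(\psi_S(y)_\alpha)_\alpha$ must have finite support in $\alpha$. This is exactly where conilpotency of $N$ enters: $\alpha_*(y)=0$ for all but finitely many $\alpha$. Invariance under permutations of $e$ and under isomorphisms $T\isomap T'$ under $S$ follows from the coinvariance relations in $\coBar N(T)$ combined with naturality of $\varphi$ along isomorphisms. For $\varphi$, we check that the formula is compatible with the coinvariance relations defining $\coBar N(S)$. This uses invariance of $\psi_T(y)$ together with naturality of $\psi$ along isomorphisms $\gamma$. The latter gives $\psi_{T'}(\gamma_* y)_{1} = (\gamma^{-1})^*$-twisted $\psi_T(y)_\gamma$, and the invariance identity then converts this back.

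Next we verify naturality. Naturality of $\psi$ means $\beta_*\psi(y)=\psi(\beta_*y)$, which reduces to $(\alpha\beta)_* = \alpha_*\beta_*$. Naturality of $\varphi$ follows from $\beta^*(\alpha,y,e)=(\alpha\beta,y,e)$ and functoriality of $M$. We then check that the two passages are mutually inverse. In the direction $\varphi\mapsto\psi\mapsto\varphi$ this uses naturality of $\varphi$: $\varphi_S(\alpha,y,e)=\alpha^*\varphi_T(1_T,\alpha_* y, e)$, because $(\alpha,y,e)=\alpha^*(1_T,\alpha_*y,e)$ in $\coBar N$, where $y\in N(S)$ is pushed forward via the coinvariance/naturality relation. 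The other direction is symmetric, using $\psi_S(y)_\alpha = \psi_T(\alpha_*y)_{1_T}$, which is naturality of $\psi$. Both facts are routine once naturality is in place.

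The main obstacle is compatibility with differentials: $\varphi$ must be a chain map if and only if $\psi$ is. We plan to write out $\psi(\partial y)_\alpha(e)$ and $(\partial\psi(y))_\alpha(e)$ term by term. The internal differential terms match directly. The inner-face integral $\int_d$ in the differential of $\Bar M$ corresponds to the inner-face integral in $\coBar N$. The signed top-vertex integral $-\int_{v\ top}$ in $\Bar M$ corresponds to the identically signed top-vertex term in $\coBar N$. In each case the sign $(-1)^e$ versus $(-1)^{de}$, together with the factor $(-1)^\omega$, produces the required agreement. The check is the same as in the $\AA$ case (cf.\ Appendix~\ref{AppendixSigns}), with the only new features being the external faces $\partial_v$ and the root edge in the enumeration.

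We will also have to confirm that the integrals, which are now over infinite sets, are handled consistently. On the $\coBar$ side they are finite sums because of conilpotency. On the $\Bar$ side they make sense because of finite support. The weighted averages over isomorphism classes match on both sides because the same classes of morphisms $d:T\to T'$ index them. Once the sign identity is established for generators $(1_T,y,e)$, naturality extends it to all of $\coBar N(S)$, and this completes the adjunction.
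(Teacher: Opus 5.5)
Your proposal is correct and matches the paper's proof: you use the same explicit formulas for $\varphi$ and $\psi$, get finite support of $\psi_S(y)$ from conilpotency of $N$, and carry over the routine checks of naturality, mutual inverseness and compatibility with the differentials from the $\AA$ case (Appendix~\ref{AppendixSigns}). One small slip needs fixing: for $(\alpha, y\otimes e)\in\coBar(N)(S)$ with $\alpha:S\to T$, the element $y$ already lies in $N(T)$, so the round trip $\varphi\mapsto\psi\mapsto\varphi$ only needs $(\alpha,y\otimes e)=\alpha^*(1_T,y\otimes e)$, hence $\varphi_S(\alpha,y\otimes e)=\alpha^*\varphi_T(1_T,y\otimes e)$, with no pushforward $\alpha_*$.
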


\begin{proof}
 The bijective correspondence  between morphisms
 $\varphi :\coBar N \to M$ and $\psi : N \to \Bar M$ is again given by the formulas 
$$\varphi_S(\alpha,y\otimes e)=(-1)^e\alpha^*\psi_T(y)_{1_T}(e)$$
 $$\psi_S(y)_\alpha(e)=(-1)^e\varphi_T(1_T, \alpha_*(y) \otimes e). $$

 Note that $\psi_S(y)$ indeed has finite support if $y$ is an element in a conilpotent copresheaf $N$. It is straightforward to check that $\varphi$ is natural if and only if $\psi$ is, that $\varphi$ preserves the differential if and only if $\psi$ does, and that the two definitions, of $\varphi$ in terms of $\psi$ and vice versa, are mutually inverse. 
 The verifications proceed exactly as for operads and cooperads, see Appendix~\ref{AppendixSigns}.
\end{proof}

 \begin{remark}
  (i) 
  Although we have no use for it in this paper, we note that the adjunction is mediated by a ``twisting cocycle'' 
  $$\tau_S : (\sbigoplus_e N(S))_{coinv} \to M(S)$$ 
  where $S$ ranges over objects of $\RR$ and $e \in en^+_S$.
  The correspondence between $\varphi$ and $\psi$ above can be expressed for $\alpha:S\to T$ and $y\in N(S)$ as 
  $$\psi_S(y)_\alpha(e)=(-1)^e \tau_T(e, \alpha_* y)$$
  $$\varphi_S (\alpha, y \otimes e)=\alpha^* \tau_T(e,y),$$
  and $\tau$ is defined in terms of $\varphi$ by this last equation, i.e.
  $\tau_S(e,y)=\varphi_S(1_S, y \otimes e)$.
  The family of morphisms $\tau_S$ for $S$ in $\RR$ is natural for isomorphisms in $\RR$, and satisfies the ``Maurer-Cartan equation''
  $$\partial_M \tau_S(e,y)-\tau_S(e, \partial_N y)=(-1)^y \int_d d^* \tau_{S'} (de,d_*y)-(-1)^y \int_{v \, top} d^* \tau_{S'} (de,(\partial_v)_*y)$$
  where both integrals range over the same $d$'s and $v$'s as before.

  (ii) We shall need an explicit description of the unit and counit of the adjunction. The unit $\eta:N \to \Bar \coBar N$ at an object $S$ is given by 
  $$\eta_S(y)_\alpha(e)=(-1)^e (1_T,\alpha_*(y) \otimes e).$$
  The counit $\varepsilon:\coBar \Bar M \to M$ is given at $S$ by
  $$\varepsilon_S(\beta, \omega \otimes e)=(-1)^e \beta^* (\omega_{1_T}(e))$$
  for $\beta : S\to T$, $\omega \in \Bar(M)(T)$ and $e\in en^+_T$.
  
  (iii) 
  For the description of $\Bar M$ in terms of coinvariants as in Section~\ref{BarAlgCoinv}, the correspondence between $\varphi:\coBar N \to M$ and 
  $\bar \psi : N \to \bBar M$ takes the form
  $$\bar \psi(y)= \int_{S\stackrel{\alpha}{\to}T,e} (-1)^e \varphi_T (1_T, e, \alpha_*y)$$
  for $y \in N(S)$. This integral (or sum over isomorphism classes, cf. Appendix of~\cite{HM21}) is finite because $N$ is assumed conilpotent. In particular, this gives the following formula for the unit 
  $\bar \eta : N \to \bBar \coBar N$ at an object $S$ of $\RR$:
  $$\bar \eta _S(y)=\int_{\alpha,e} (-1)^e (\alpha, e,(1_T,e,\alpha_*y))$$
  with $\alpha:S\to T$ and $e\in en^+_T$ as before.
  The counit $\bar \varepsilon :\coBar \bBar M \to M$ at $S$ is defined for $\alpha:S\to R$ and $e\in en^+_R$, and for any $\xi=(R \stackrel{\beta}{\to}T,f\otimes x) \in \bBar(M)(R)$, by
  $$\bar \varepsilon_S(\alpha, \xi \otimes e)=(-1)^{e+\sigma}(\beta\alpha)^*(x)$$
  provided $\beta$ is an isomorphism for which $\beta e = f \sigma$; otherwise $\bar \varepsilon_S(\alpha, \xi \otimes e)=0$.
 \end{remark}

 \section{Bar-cobar duality}
 In this section, we will discuss a version of bar-cobar (or ``Koszul'') duality for (co)presheaves on the category $\RR$, as expressed in the following theorem.
 
 \begin{theorem}\label{Th:dualite}
The bar and cobar functors $\Bar: Psh(\RR) \to CoPsh^c(\RR)$ and $\coBar : CoPsh^c(\RR) \to Psh(\RR)$ are mutually inverse up to quasi-isomorphism. More precisely, in terms of the adjunction of Proposition~\ref{adjonctionAlg}, for each conilpotent copresheaf $N$ the unit $\eta : N \to \Bar \coBar N$ is a quasi-isomorphism, as is the counit $\varepsilon : \coBar \Bar M \to M$ for each presheaf $M$.
 \end{theorem}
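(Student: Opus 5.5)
We argue objectwise at a fixed tree $S$, using the coinvariant descriptions $\bBar$ and the explicit formulas of the Remark following Proposition~\ref{adjonctionAlg} for $\bar\eta$ and $\bar\varepsilon$. For the counit, $\coBar\bBar M(S)$ is spanned by classes of pairs of composable arrows $S\stackrel{\alpha}{\to}R\stackrel{\beta}{\to}T$ in $\RR$ together with enumerations $e\in en^+_R$ and $f\in en^+_T$ and an element $x\in M(T)$. The differential has three kinds of terms: the internal differential of $M$; the terms enlarging $R$ (the $\coBar$ part, adding an inner edge $d$ or a top vertex $v$ to $R$); and the terms shrinking $T$ relative to $R$ (the $\bBar$ part, contracting an edge $e_i$ of $T$ not in $R$ or chopping a top vertex of $T$ not in $R$). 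The counit $\bar\varepsilon$ is nonzero only on the summand where $\beta$ is an isomorphism, where it sends the element to $\pm(\beta\alpha)^*x$. We therefore filter by the number of vertices of $T$ (equivalently, we fix the composite $\beta\alpha:S\to T$). This filtration is preserved, since the $\coBar$-part only factors $\beta\alpha$ differently and the $\bBar$-part decreases $T$. It is exhaustive and bounded below, so a spectral sequence argument, or a direct contracting homotopy on each graded piece, reduces the claim to the following: for fixed $\gamma:S\to T$ and $x$, the complex of factorisations $S\to R\to T$ of $\gamma$, weighted by $k[en^+_R]\otimes k[en^+_T\setminus en^+_R]$ (the edges of $T$ not coming from $R$), is acyclic unless $R=T$, and in that case it contributes exactly $M(T)$.

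To prove this we identify factorisations of $\gamma$ through $R$ combinatorially. By the factorisation of Section~\ref{S:Categories} (an inner face followed by a pruning), an intermediate $R$ is determined up to isomorphism by the set $A\subseteq E(T)\setminus E(S)$ of root/inner edges of $T$ that survive in $R$, subject to the constraint that the part chopped off is a union of top vertices. The complementary edges, those in $E(T)\setminus A$, are exactly the ones enumerated by $f$ but not by $e$. The associated graded piece is then the tensor product of $M(T)$ with a cochain complex of the simplex (or of a product of simplices) on the set $E(T)\setminus E(S)$, where each edge independently sits either in $e$ or in $f$. The differential moves an edge from one enumeration to the other, and both move-types (inner contraction $e_i^*$ and vertex chopping $v_i^*$) are compatible with this picture thanks to the dendroidal identity $\partial_{v_j}\partial_{e_i}=\partial_{v_j}\partial_{v_i}$ used for $\bar\partial\bar\partial=0$. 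Such a Koszul-type complex $\bigotimes_{\text{edges}}(k\to k)$ is acyclic as soon as there is at least one edge. When $E(T)\setminus E(S)$ is empty, $\gamma$ is an isomorphism and we recover $M(S)$. A concrete contracting homotopy is to choose one distinguished edge not in $S$ (for instance the lowest one, the edge closest to the root of $T$ among the new edges) and move it from $f$ to $e$.

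The unit is handled dually. $\bBar\coBar N(S)$ is spanned by $(\alpha:S\to T, e, (\beta:T\to U, y, f))$ with $y\in N(U)$. Since $N$ is conilpotent, every sum appearing is finite, and we filter by the size of $U$, using that $\bar\eta(y)=\int_{\alpha,e}\pm(\alpha,e,(1,e,\alpha_*y))$ lands in the diagonal part. The same per-edge Koszul complex then shows that the graded pieces are acyclic except for the diagonal contribution, which is identified with $N(S)$. Convergence here requires care: the filtration is by the size of $U$, which is unbounded, but every element involves only finitely many trees, so the filtration is exhaustive and the spectral sequence still converges (or the homotopy is defined degree by degree on finitely supported sums).

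The main obstacle is the edge combinatorics. In $\RR$, external edges of $R$ may become inner in $T$, and leaves of $R$ where a top vertex is attached behave differently from inner contractions. One must check that the set of admissible subsets $A$ really forms a full Boolean cube, or at least a contractible cone with respect to the chosen homotopy edge, despite the constraint that prunings only remove top vertices; the root edge, which lies in $en^+$ but is never contracted, also needs separate treatment. A second difficulty is verifying the signs and the weighting in the integrals, so that the chosen homotopy $h$ actually satisfies $\partial h+h\partial=\mathrm{id}$ on the non-diagonal part. The first thing to try is to choose the homotopy edge as the lowest new edge relative to $S$: this makes the face either inner or a chop at a top vertex in a uniform way.
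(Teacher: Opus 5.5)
Your plan is the paper's in mirror image. The paper proves the unit in detail and leaves the counit to the reader. Its unit argument uses a triple complex whose rows (fixed size of the big tree) are contracted by a one-edge homotopy $h_a$, then a tic-tac-toe lemma, then an explicit solution $y_{\alpha,e}=(-1)^e\alpha_*(y_{\mathrm{id}})$ of the degree-zero cycle equations. You start with the counit, where the filtration by the number of vertices of $T$ is increasing, exhaustive and bounded below, so that reduction is sound.

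Both routes rest on one lemma: for non-invertible $\gamma:S\to T$ in $\RR$, the complex spanned by the faces of $T$ through which $\gamma$ factors is acyclic, with differential given by elementary inner faces and prunings (in either direction). Here you have a genuine gap.

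This complex is not a Boolean cube on $en^+_T\setminus en^+_S$. A new edge can be kept, contracted, or cut off with the vertex above it, and the edge set does not determine the face; the identity $\partial_{v_j}\partial_{e_i}=\partial_{v_j}\partial_{v_i}$ you cite is exactly this phenomenon. For example, let $S$ be the corolla with root $r$, vertex $v$ and leaves $a,b$. Let $T$ be obtained by grafting onto $a$ a vertex $w$ with inputs $c,d$, and onto $c$ a further binary vertex $u$. The factorisations of $S\to T$ are:
$S$; the pruning $R_1$ with vertices $v,w$; the inner face $R_2=\partial_cT$; and $T$.
Their root-and-inner edge sets are $\{r\}$, $\{r,a\}$, $\{r,a\}$, $\{r,a,c\}$, so $\{r,c\}$ never occurs and $\{r,a\}$ occurs twice.

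Your lowest new edge is $a$, which is inner in three of the four factorisations. So inserting or deleting $a$ is not a bijection and cannot be a contracting homotopy; it already fails on $T$. For $S=\eta$ the lowest new edge is the root, and the same thing happens.

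The paper's one-edge homotopy works when the inner part of $\gamma$ is non-trivial. Take $a$ an inner edge of the minimal pruning containing $S$ that is not an edge of $S$; every factorisation has $a$ inner or contracted, and uncontracting is a bijection. In the pruning case, however, the example above defeats a one-edge homotopy as well, including the paper's.

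What works is a second filtration, by the pruning part $P$ of the middle tree.
For fixed $P$ only contractions survive. They form an honest cube on the inner edges of $P$ that are not edges of $S$, which is acyclic unless this set is empty.
That set is never empty when $\gamma$ has non-trivial inner part.
If $\gamma$ is a pruning, the survivors are the trees obtained from $S$ by grafting a subset of the set $L$ of vertices of $T$ sitting directly on leaves of $S$ (on the root if $S=\eta$). Under chopping and grafting these form a cube on $L$, and $L\neq\emptyset$ because $\gamma$ is not invertible.

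Your convergence argument for the unit is also insufficient. Write $|U|$ for the number of vertices. The $\coBar$-part of the differential enlarges $U$, so the subcomplexes are $\{|U|\ge m\}$: a decreasing filtration that is infinite in every total degree. For a direct-sum total complex, exhaustiveness plus finite support does not give convergence. For instance, take the double complex with:
$C_{q,q}=kx_q$ for $q\ge r$ and $C_{q-1,q}=ky_q$ for $q>r$;
horizontal maps $x_q\mapsto y_q$ and vertical maps $x_q\mapsto y_{q+1}$ for $q>r$, and $x_r\mapsto 0$.
Its rows are exact for $q>r$. Yet the functional $y_q\mapsto(-1)^q$ kills all boundaries but not $y_{r+1}$, so the total homology is non-zero in degree $-1$.

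Conilpotency must enter at this point. If you push a cycle up the filtration using the homotopies on the graded pieces, every element of $N$ that appears comes from the finitely many $y_0$ in the original cycle by pushforwards (and $\partial_N$). These vanish once the target tree is large enough, so the procedure stops after finitely many steps. Injectivity of $\bar\eta$ in homology is then immediate: the quotient by $\{|U|>|S|\}$ is $N(S)$, and the projection is a left inverse of $\bar\eta$ up to sign. The paper's tic-tac-toe iteration tacitly needs the same finiteness.
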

 
 We will prove in detail that each unit  $\eta : N \to \Bar \coBar N$ is a quasi-isomorphism, and use the description of the bar construction in terms of coinvariants to do so.
 The case of the counit is essentially dual and proceeds along the same lines, so we leave the details for the counit to the reader.
 
 \subsection{The complex $\bBar \coBar(N)$}
 
For an object $R \in \RR$, let us describe the complex $\bBar \coBar (N)(R)$ in more detail. By definition, it is the complex
$$\Big(\bigoplus_{\alpha,e} \ \bigoplus_{\beta,f} N(T)
\Big)_{coinv}$$
where $\alpha: R \to S$ and $\beta:S\to T$ are in $\RR$, $e$ runs over enumerations of the root and inner edges of $S$ and $f$ over those of $T$. Since we are taking coinvariants, we may as well assume that $e$ is the restriction of $f$ along $\beta$, and discard $e$ altogether. For similar reasons, we may assume that $S$ is a face of $T$, i.e. $\beta$ is a composition  of elementary inner and external face maps but no isomorphisms.
With these conventions, the coinvariants are only for permutations of the enumeration $f$ and for isomorphisms of $T$. In other words, elements of the complex can be written as sums of elements of the form
$$(R \stackrel{\gamma}{\to} T, S, f,y)$$
where $S \mono T$ is a face containing the image of $R$, $f$ is an enumeration of (the root and inner edges of) $T$, and $y \in N(T)$.
Taking coinvariants in particular means that for an isomorphism $\delta : T \isomap T'$, such a quadruple $(R \stackrel{\gamma}{\to} T, S, f,y)$ is identified with $(R \stackrel{\delta\gamma}{\to} T', \delta(S), \delta f,\delta_*y)$.
The complex $\bBar \coBar (N)(R)$ is the total complex of a triple complex.
The degree in this triple complex of an element $(R \stackrel{\gamma}{\to} T, S, f,y)$ is $(p,q,n)$ where $p$ is the number of vertices of $S$, $q$ that of vertices of $T$ (for which we also wrote $f$), and $n$ is the degree of $y \in N(T)$. (We also write $y$ for this degree $n$.)
The total degree is $p+(n-q)$, in accordance with the degree conventions for the bar and cobar constructions in Section~\ref{AlgBarETCobar}.
The external differential  $\partial^{ext} :\bBar \coBar (N)(R) _{p,q,n} \to \bBar \coBar (N)(R)_{p-1,q,n}$ coming from $\bBar$ is given by the formula
$$\partial^{ext}(R \stackrel{\gamma}{\to} T, S, f,y)=(-1)^{p+n-q} \sum_{i=1}^p (-1)^{i-1} (R \stackrel{\gamma}{\to} T, \hat \partial_{i} S, f,y), \quad \quad (1) $$
where $e=(e_1, \ldots, e_p)$ is the enumeration of the root and inner edges of $S$ induced by the enumeration $f$, and the sum is only over $i$ for which $\partial_{e_i} S$ contains $R$.
This differential of tridegree $(-1,0,0)$ affects neither $T$ and $f$, nor $y$.
The inner differential of $\bBar \coBar (N)(R)$ coming from the differential of $\coBar (N)(R)$ has itself an ``external'' part $\partial ^\vee$ of tridegree $(0,1,0)$ which we can write as 
$$\partial^\vee(\gamma, S, f,y)=(-1)^{n} \int_d (d\gamma, S, df,(\partial_d)_* y) -(-1)^{n} \int_{v \ top} (d\gamma, S, df,(\partial_v)_* y)
\quad \quad (2)$$
where both integrals range over the same $d$'s and $v$'s as before, and an internal part $\partial_Y$ of tridegree $(0,0,-1)$,
$$\partial_Y(\gamma, S, f,y)=(\gamma, S, f,\partial_N y). \quad \quad (3)$$

Notice that, ignoring the degree $n$ from $N$, for the bidegree $(p,q)$  we have $p \leq q$, where $p$ is bounded below by the size of the tree $R$ while $q$ is in principle unbounded above.
So the $(p,q)$-double complex lives in an octant as pictured below, where $r$ is the number of vertices of $R$.

\bigskip

\begin{center}
  \begin{tikzpicture}[scale=0.05]
 \draw (10,1) -- (10,100) ;
 \draw (1,10) -- (100,10) ;
 \draw (1,1) -- (100,100) ;
 \draw (40,1) -- (40,100) ; 
 \draw (13,7) node {0} ;
 \draw (43,7) node {$r$} ;
 \draw (98,7) node {$\to p$} ;
 \draw (5,95) node {$\uparrow q$} ;
 \draw (95,80) node {$p=q$} ; 
\fill[fill=lightgray]  (40,40) -- (100,100) -- (40,100) ;
 \end{tikzpicture}
\end{center}
Let us keep the degree $n$ fixed for the moment and focus on this $(p,q)$-bicomplex.

\begin{lemma}
 Each row in the $(p,q)$-complex is acyclic, with the exception of the bottom row in bidegree $p=q=r$ which consists of $N(R)$ only (for $\gamma :R \to T$ and $S \to T$ both the identity).
\end{lemma}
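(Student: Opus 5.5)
Fix the degree $n$ and a row, i.e. a value of $q$. By the conventions set up above, a generator of the row is a quadruple $(R\stackrel{\gamma}{\to}T,S,f,y)$. The data $(\gamma,T,f,y)$ is fixed up to the coinvariance relations, and only the face $S$, with $\gamma(R)\subseteq S\mono T$, varies. The differential $\partial^{ext}$ of formula~(1) changes only $S$. So the row splits as a direct sum, over isomorphism classes of $(\gamma,T,f)$, of copies of a complex $K(\gamma,T)\otimes N(T)$, taken with the coinvariants for the automorphisms of $T$ fixing $\gamma$. Here $K(\gamma,T)$ is the purely combinatorial complex spanned by the faces $S$ of $T$ in $\RR$ that contain $\gamma(R)$, with differential $\sum_i(-1)^{i-1}\hat\partial_i$. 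Since $k$ is a field of characteristic zero, or at least since the automorphism groups act compatibly and coinvariants are exact over $k$ for finite groups, it suffices to show that $K(\gamma,T)$ is acyclic whenever $\gamma$ is not an isomorphism, equivariantly in the choices. When $\gamma$ is an isomorphism, the only face is $S=T\cong R$, giving the copy of $N(R)$ in bidegree $p=q=r$.

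The first step is to describe $K(\gamma,T)$ as a cube complex. A face $S\mono T$ in $\RR$ factors as an inner face followed by a pruning. Because of the conventions above, $S$ is recorded by the subset $A\subseteq en^+_T$ of edges of $T$ that remain root or inner edges of $S$, with the enumeration of $S$ induced by $f$. The operator $\hat\partial_i$ deletes $e_i$ from $A$. It does so either by contracting $e_i$ (the term $e_i^*$), by chopping the top vertex $v_i$ (the term $v_i^*$), or as the difference of the two when both are possible. This difference is exactly what the identity $\partial_{v_j}\partial_{e_i}=\partial_{v_j}\partial_{v_i}$ makes consistent. I would check that, after this identification, $K(\gamma,T)$ becomes the augmented simplicial chain complex of subsets $A$ with $B\subseteq A\subseteq en^+_T$. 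Here $B$ is the set of edges forced by containing $\gamma(R)$. The check is that the $e^*$ and $v^*$ contributions assemble into a single coefficient $\pm1$ on each codimension-one inclusion $A\setminus\{e_i\}\subset A$, with the Koszul signs $(-1)^{i-1}$.

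Granting this, acyclicity follows from the standard contracting homotopy. Choose an edge $e_0\in en^+_T\setminus B$, which exists precisely when $\gamma$ is not an isomorphism. To make the choice canonical, and hence compatible with automorphisms of $T$ under $R$, take $e_0$ to be the unique lowest such edge if possible; otherwise average over the orbit, which the ``integral'' notation of the paper allows. Define $h(A)=(e_0,A)$, that is, $A\cup\{e_0\}$ with $e_0$ placed first in the enumeration, if $e_0\notin A$, and $h(A)=0$ otherwise. Then $\partial h+h\partial=\pm\mathrm{id}$ on $K(\gamma,T)$. Equivalently, $K$ is the cone of the identity map between the summands with and without $e_0$. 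This homotopy commutes with $\partial_N$ on $y$, so it gives acyclicity of the row.

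I expect the main obstacle to be the first step: showing that the two kinds of terms $e_i^*$ and $v_i^*$ in $\hat\partial_i$ do not spoil the cube structure. The difficulty is cases where deleting $e_i$ by contraction and by chopping lead to different faces $S$, so the complex is not literally a cube on subsets. If that happens, I would instead filter $K(\gamma,T)$ by the number of top vertices of $T$ chopped off. On the associated graded only the inner terms $e_i^*$ survive. That graded piece is an honest cube on inner edges, which is acyclic as long as some inner edge of $T$ lies outside $R$. The remaining cases, where the only missing edges are roots of top vertices, would be handled by the $v^*$-cube in the same way. A spectral sequence argument, which converges because the filtration is finite for fixed $T$, would then conclude.
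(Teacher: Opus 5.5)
\textbf{The gap in your main route.} The step your main argument rests on is false. $K(\gamma,T)$ is not the cube of subsets $B\subseteq A\subseteq en^+_T$. A face $S\mono T$ in $\RR$ containing $\gamma(R)$ is not determined by which edges of $T$ are its root and inner edges, and many subsets are not realised at all.

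Take $R=\eta$ and $T$ with root vertex $v$ and one vertex $w$ on the inner edge $a$. The faces are $T,\partial_aT,\partial_wT,\eta$. The two corollas $\partial_aT$ and $\partial_wT$ both have $en^+=\{r\}$, while no face has $en^+=\{a\}$. With two top vertices above $v$ the ranks are $1,4,4,1$, so the complex is not a cube at all.

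Consequently the $e_i^*$- and $v_i^*$-terms of $\hat\partial_i$ are distinct basis elements, not one cube coefficient. Your $h(A)=A\cup\{e_0\}$ is also undefined. For $e_0=a$ there is no face for $\{a\}$. For $e_0=r$, both $\partial_aT$ and $\partial_wT$ compete for $h(\eta)$, and either choice violates $\partial h+h\partial=\pm\mathrm{id}$ on the other corolla, since $h$ kills it while $h\partial$ sends it to $\pm h(\eta)$.

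\textbf{Your fallback is the right repair, but its criterion is wrong.} Filter by the pruning $T'\mono T$ through which $S$ factors. Note that chopping a composite top vertex of $S$ removes several vertices of $T$, not only top ones. The associated graded is then $\bigoplus_{T'}$ of cubes on the inner edges of $T'$ (not of $T$) that are not edges of $\gamma(R)$; leaves of $R$ cannot be contracted.

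When $R\to R'$ is not an isomorphism, every $T'$ contains $R'$, hence an inner edge of $R'$ outside $R$, and $E^1=0$. Here the paper's single homotopy $h_a$, which un-contracts such a fixed edge $a$, contracts the whole complex directly.

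When $\gamma$ is external, look at the pieces for $R$ with single vertices of $T$ grafted onto a subset of $L=\{\ell\in\lambda(R):\ell\in en^+_T\}$ (for $R=\eta$, $L=\{r\}$). These pieces are one-dimensional however many inner edges $T$ has outside $R$. So this is not a separate ``remaining case''. For every external $\gamma$ you must identify $d^1$, induced by the $v^*$-terms, with the cube differential on subsets of $L\neq\emptyset$, which gives $E^2=0$. With that done, and working in characteristic zero as you assume in order to pass to coinvariants, your argument is complete.

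\textbf{Comparison with the paper.} Your corrected route is actually more robust than the paper's external case. There $h_a$ grafts the single vertex of $T$ above a leaf $a$ of $R$. Take $T$ with root vertex $u$ (inputs $a,b$), $w$ above $a$ (inputs $c,d$), $x$ above $c$, and $R=C_u$. Then $\partial(\partial_cT)=\pm R$ and $h_a(R)=\partial_xT$, so $(\partial h_a+h_a\partial)(\partial_cT)=\pm\partial_xT$, not $\pm\partial_cT$. This is the same non-uniqueness of ``$S$ plus one edge'' that breaks your cube. Moreover, for $R=\eta$ the required leaf of $R$ inner in $T$ does not exist.
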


\begin{proof}
 First notice that as the differential $\partial^{ext}$ of bidegree $(-1,0)$ on $(\gamma, S,f,y)$ does not affect $\gamma,f$, and $y$, for fixed $q$ (and fixed $n$ and $y$) the complex falls apart into a sum of complexes $C(\gamma,f)$ where $(\gamma,f)$ ranges over isomorphism classes of morphisms $\gamma : R \to T$ and enumerations $f\in en^+_T$. Let us fix such an isomorphism class, which we can represent by a face map $\gamma : R \to T$ and an enumeration $f$.
 Then $C(\gamma,f)$ can be identified with the tensor product of $N(T)_n$ and the complex
 $$\underline k [Fact(\gamma)]$$
 which in degree $p$ is the vector space generated by factorisations $R\mono S \mono T$ of $\gamma$ where $S \mono T$ is a face of $T$ with exactly $p$ vertices. We will simply write $S$ for such a factorisation.
 The differential is a sum
 $$\partial S = \sum \pm S'$$
 where $S'\mono S$ is an elementary face of $S$ containing $R$.
 This could be an inner face contracting an inner face $e$ of $S$, or a pruning chopping off a top vertex $v$ of $S$ not belonging to $R$.
 If $(e_1, \ldots, e_p)$ is the enumeration of inner edges of $S$ induced by the fixed enumeration $f$, then the sign $\pm$  in the previous equation is $(-1)^{p+(n-q)+i-1}$ if $S'=\partial_{e_i}S$ is an inner face, and $(-1)^{p+(n-q)+i}$ if $S'=\partial_{v_i}S$ with $v_i$ a top vertex of $S$ with output edge $e_i$. For ease of notation, let us multiply away the factor $p+(n-q)$, as it suffices to show that the complex $\underline k [Fact(\gamma)]$ with signs given by 
 $$(-1)^{i-1} \text{ if } S'=\partial_{e_i}S$$
 $$(-1)^{i} \text{ if } S'=\partial_{v_i}S$$
 is contractible.
 
 Now suppose $\gamma: R \to T$ is not an isomorphism, i.e. the complex $\underline k [Fact(\gamma)]$ is not concentrated in just one degree $p-q=r$. Let us factor  $\gamma: R \to T$ as $R \to R' \to T$ where $R\to R'$ is inner and $R' \to T$ is external. Then one of these is not an isomorphism. If $R\to R'$ isn't, choose an inner edge $a$ in $R'$ that does not belong to $R$. And if $R \to R'$ is an isomorphism, choose a leaf $a$ of $R$ (or $R'$) which is an inner edge of $T$. Such a leaf of $R$ exists because otherwise $R' \to T$ would also be an isomorphism.
 
 Using this edge $a$ of $T$, we can define a homotopy operator $h_a$ by
 $$\begin{array}{rcl}
 h_a(R\to S\to T) & = & R \to S_a \to T \text{, if $a$ is not an inner edge of $S$} \\
  & = & 0 \text{ otherwise.}
   \end{array}
   $$
 Here $S_a$ is the unique face of $T$ whose inner edges are those of $S$ together with $a$. To see that such a face $S_a$ exists (in case $a$ is not an inner edge of $S$), we again use the inner-external factorisation, as in the diagram
 \begin{equation*}
\xymatrix@M=5pt@R=12pt@C=12pt
{
R \ar[rr] \ar[dr]_{in} && S\ar[dr]_{in}  \ar[rr] && T \\ 
& S'\ar[dr]_{in} \ar[ur]_{ex} && T'\ar[ur]_{ex} \\
 && R'\ar[ur]_{ex} 
}
\end{equation*}
where edges marked $in$ are inner and those marked $ex$ are external.
 If $R\to T$ is not external, then $a$ belongs to $R'$ hence to $T'$, and $S$ is obtained from $T'$ by contracting a number of inner edges including $a$ (still assuming $a$ is not an inner edge of $S$), and $S_a$ is the tree obtained from $T'$ by contracting these inner edges except $a$.
 And if $R \to T$ is external, then $R=S'=R'$ in the diagram above, which simplifies to
 \begin{equation*}
\xymatrix@M=6pt@R=14pt@C=14pt
{
R \ar[r]^{ex} \ar[d]_{ex} & S\ar[d]^{in}\\ 
T& T'\ar[l]_{ex}
}
\end{equation*}
 where the composition $R\to T'$ is again external and $a$ is a leaf of $R$. If $a$ is not an inner edge in $S$ then $a$ is a leaf of $S$, and $S_a$ is the tree obtained by grafting the vertex of $T$ immediately above $a$ on top of $S$.
 
 To see that $h_a$ is a contracting homotopy, we may assume $a=f_1$ in the chosen enumeration $(f_1, \ldots, f_q)=f$ of  the root and inner edges of $T$. Then if $R \to T$ is not external, $S\to S_a$ is an inner face for each $S$, and
 $$\partial (h_a S)=S -h_a(\partial S).$$
 And if $R \to T$ is external, $S\to S_a$ is an external face for each $S$, and 
 $$\partial (h_a S)=- S -h_a(\partial S).$$
 In either case, we see that for $R \to T$ fixed, $h_a$ is a contracting homotopy.
 
 This shows that the complex $\underline k [Fact(\gamma)]$ is contractible if $\gamma: R \to T$ is not an isomorphism, and proves the lemma.
\end{proof}

Thus, still keeping the degree $n$ coming from the presheaf $N$ fixed, we find ourselves in the following situation.

\begin{lemma}
 Let $C_{p,q}$ be a double complex with  $C_{p,q}=0$ unless $r \leq p \leq q$ for a fixed lower bound $r$, with differentials $\partial^{ext}$ of bidegree $(-1,0)$ and $\partial^\vee$ of bidegree $(0,1)$, as in the picture (*) above.
 Suppose the rows $C_{-,q}$ are contractible for $q >r$. Then the homology of the total complex $Tot(C)_s=\bigoplus_{p-q=s} C_{p,q}$ vanishes in degree $s<0$.
\end{lemma}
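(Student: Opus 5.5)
We will prove the vanishing by a direct zig-zag (staircase) argument, using the contracting homotopies of the rows. Fix $s<0$ and a cycle $c\in Tot(C)_s$, and write $c=\sum_{p-q=s} c_{p,q}$. Since $C_{p,q}=0$ unless $r\le p\le q$, the component $c_{p,q}$ is nonzero only for $p\ge r$. Because $Tot$ is a direct sum, only finitely many components are nonzero, so we can let $p_0$ be the largest $p$ with $c_{p,p-s}\neq 0$. Note that $q_0=p_0-s>p_0\ge r$, because $s<0$. So every component of $c$ lies in a row $q>r$, and all these rows are contractible by hypothesis. We choose contracting homotopies $h_q$ on the rows, with $\partial^{ext}h_q+h_q\partial^{ext}=\mathrm{id}$ for $q>r$.

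The plan is to kill $c$ from the top component downwards. The total differential is $D=\partial^{ext}+\partial^\vee$, up to the usual sign convention making $D^2=0$. The component of $Dc$ in bidegree $(p_0,q_0+1)$ is $\partial^\vee c_{p_0,q_0}$, and the component in bidegree $(p_0+1,q_0+1)$ would come from $\partial^{ext}c_{p_0+1,q_0+1}$, which is zero. Examining the lowest-indexed component is simpler. Let $p_1$ be the smallest $p$ with $c_{p,p-s}\ne 0$. The component of $Dc$ in bidegree $(p_1-1,q_1)$ is $\partial^{ext}c_{p_1,q_1}$, and it must vanish. So $c_{p_1,q_1}$ is a $\partial^{ext}$-cycle in row $q_1>r$. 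Hence $c_{p_1,q_1}=\partial^{ext}b$ with $b=h_{q_1}c_{p_1,q_1}\in C_{p_1+1,q_1}$, which lies in total degree $s+1$. Replacing $c$ by $c-Db$ removes the component at $p_1$ and alters only the component in bidegree $(p_1+1,q_1+1)$, which is still in total degree $s$ and still in a row $q_1+1>r$.

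We iterate this step, moving the lowest component up the diagonal $p-q=s$. The main obstacle is termination, because $q$ is unbounded above and the process pushes components upward. We would handle this as follows. After the step at $p$, the component at $p$ is zero. The new component at $p+1$ is the old one plus $\pm\partial^\vee b$, and the higher components are unchanged. After passing $p_0$, the cycle consists of a single component $c'_{p,q}$ in some bidegree. The condition $Dc'=0$ then forces both $\partial^{ext}c'_{p,q}=0$ and $\partial^\vee c'_{p,q}=0$. One more step gives $c'=Db'+(\text{new component})$, where the new component is $\pm\partial^\vee h_q c'_{p,q}$, so the single-component case does not terminate by itself.

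To fix this, we take the homotopies compatibly and write a closed formula instead of iterating. Set $b=\sum_{k\ge0}(h\partial^\vee)^k h\,c$ with the appropriate signs. This is the standard homological perturbation formula. We then need to argue that it is a finite sum, or else that in the intended application the homotopies $h_a$ built in the previous lemma satisfy a nilpotence such as $h\partial^\vee h$-vanishing. If finiteness fails, we would instead filter by $p$: the column filtration is bounded below at $p\ge r$ and exhaustive. The $E^1$ page computed with $\partial^{ext}$ is zero except in row $q=r$, which sits only in total degree $0$. A complete-convergence argument (Boardman) then gives vanishing for $s<0$. Of the two, we expect the convergence route to be the cleaner one to write down, since the direct-sum totalization together with the lower bound on $p$ gives a bounded-below exhaustive filtration.
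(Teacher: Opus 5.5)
Your diagnosis of the staircase is correct, and it applies verbatim to the paper's own proof. There, one subtracts $Dh(x'_{q'+s,q'})$ to kill the lowest component and claims the new cycle is still supported in $q\le q_0$. But $\partial^\vee h(x'_{q'+s,q'})$ lies in row $q'+1$, so the support only moves up one row per step.

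Neither of your repairs closes this. The series $\sum_k (h\partial^\vee)^k h\,c$ is in general infinite, so it only makes sense in the product totalization $\prod_{p-q=s}C_{p,q}$. The spectral-sequence route conflates two filtrations. The column filtration by $p$ is bounded below and exhaustive, but its $E^0$-differential is $\partial^\vee$ (the one preserving $p$), about which nothing is assumed. The filtration whose $E^1$ is computed by $\partial^{ext}$ is the row filtration $F_k=\bigoplus_{q\ge k}C_{p,q}$. In total degree $s$ it involves the infinitely many rows $q\ge r-s$, and it is not complete for the direct-sum totalization, so Boardman's convergence theorem does not apply.

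In fact no argument can succeed, because the statement as written is false. Take $r=0$, and set $C_{q-1,q}=k\,a_q$ and $C_{q,q}=k\,b_q$ for $q\ge1$, with all other $C_{p,q}=0$. Define $\partial^{ext}b_q=a_q$, $\partial^\vee b_q=a_{q+1}$ and $\partial^{ext}a_q=\partial^\vee a_q=0$. All mixed composites vanish, so this is a double complex with $D=\partial^{ext}+\partial^\vee$. Each row $q\ge1$ is $k\,b_q\cong k\,a_q$, hence contractible, and $Db_q=a_q+a_{q+1}$. The functional $a_q\mapsto(-1)^q$ kills every boundary but not the cycle $a_1$, so $H_{-1}(Tot)\neq0$. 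The staircase turns $a_1$ into $-a_2$, then $a_3$, and so on forever.

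What does hold is the version with $\prod$ in place of $\bigoplus$. Alternatively, the direct-sum version holds under an additional hypothesis such as local nilpotence of $h\partial^\vee$. Then your perturbation formula is a finite sum, contracts the subcomplex $\bigoplus_{q>r}C_{p,q}$, and gives $Tot\simeq C_{r,r}$.

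In the application in the proof of Theorem~\ref{Th:dualite} this hypothesis is available, but it comes from the conilpotence of $N$, not from the $h_a$ themselves. The homotopy $h$ leaves $\gamma\colon R\to T$, $f$ and $y$ unchanged. So $(h\partial^\vee)^k h$ only involves pushforwards of $y\in N(T)$ along composites of $k$ elementary faces, and these vanish for $k\gg0$. That finiteness input is the missing idea, and the lemma has to be restated to include it.
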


\begin{proof}
The proof follows the usual tic-tac-toe argument. 
Since the grading is perhaps a bit unusual, we sketch the proof. 
Let $h:C_{p,q}\to C_{p+1,q}$ be the contracting homotopy, say 
$h \partial^{ext} +\partial^{ext}h=Id$.
(The precise signs are irrelevant here.)
Consider the total complex $Tot(C)_s=\bigoplus_{p-q=s} C_{p,q}$ with differential $D=\partial^{ext}+\partial^\vee$, and notice that it is concentrated in degrees $\leq 0$. Let $x$ be a cocycle of total degree $s<0$, say $x=(x_{p,q})_{p-q=s}=(x_{q+s,q})_{q \geq r-s}$.
As an element of the direct sum, $x_{q+s,q}$ is non-zero for only finitely many $q$'s, say $q\leq q_0$. So it suffices to prove that the homology class of $x$ can be represented by another cycle $x'=(x'_{q+s,q})_{q \leq q_0}$ for which $x'_{q+s,q}=0$ for increasingly large $q$'s. Suppose we have already achieved that $x'_{q+s,q}=0$ for $q<q'$. 
Let $x''=x'-Dh(x'_{q'+s,q'})$. Since $Dh(x'_{q'+s,q'})$ lives in bidegrees 
$(q'+s,q')$ and $(q'+s,q'+1)$, the cycle $x''$ still has the property that $x''_{q+s,q}=0$ for $q<q'$. In addition, $\partial^{ext}(x'_{q'+s,q'})=0$ so $x''_{q'+s,q'}=0$ by the homotopy equation for $h$.
This proves the lemma.
\end{proof}

\begin{remark}
 In the situation of the lemma, the homology in degree $0$ is the kernel of the map 
 $$D : \bigoplus_p C_{p,p} \to \bigoplus_p C_{p,p+1}$$
 consisting of elements $x=(x_p)_p$ with $x_p \in C_{p,p}$ such that $\partial^{ext}(x_{p+1})+\partial^\vee(x_p)=0$ in $C_{p,p+1}$. 
\end{remark}

\begin{proof}[Proof of the theorem]
 Let us go back to the triple complex
 $\bBar \coBar (N)(R)_{p,q,n}$ with its differentials
 $\partial^{ext}, \partial^\vee, \partial_N$ of degrees $(-1,0,0), (0,1,0)$ and $(0,0,-1)$, and total degree $p-q+n$.
 Let $K_n \subset \bigoplus_p \bBar \coBar (N)(R)_{p,p,n}$ be the kernel as in the previous remark.
 The differential of $N$ induces a differential $\partial_N : K_n \to K_{n-1}$, and by the previous lemma and remark, the inclusion if $(K_n, \partial_N)$ into the total complex of $\bBar \coBar (N)(R)$ is a quasi-isomorphism.
 We claim that this inclusion is exactly the image of the unit $\bar \eta: N(R) \to\bBar \coBar (N)(R)$, which would complete the proof of the theorem.
 
 To check this claim, observe that elements of $K_n$ can be represented as finite support assignments of an element
 $$y_{\gamma,e} \in N(T)_n$$
 to each $\gamma:R\to T$ and enumeration $e=(e_1, \ldots, e_p) \in en^+_T$ ; or in additive notation as a sum of elements of the form $(R \to S \to T, e, \gamma)$, as
 $$y=\sum (R\stackrel{\gamma}{\to}T=T, e, y_{\gamma,e})$$
 with the usual identification for coinvariants.
 The equation $\partial^{ext}y+\partial^\vee y=0$ means that for each composition of the form $R \stackrel{\alpha}{\to} \partial_{e_i}T \stackrel{\partial_{e_i}}{\to}T$ and enumeration $e$ as above,
 $$(-1)^{(p-1)+(n-p)} (-1)^{i-1} y_{\partial_{e_i}\circ \alpha,e} 
 = (-1)^n (\partial_{e_i})_*(y_{\alpha,e_1 \ldots \hat{e_i} \ldots e_p});$$
 or equivalently,
 $$y_{d\alpha, df}=-d_* y_{\alpha, f}$$
 for each $R \stackrel{\alpha}{\to} T \stackrel{d}{\to}T'$ where $f$ enumerates inner edges of $T$ and $d:T\to T'$ is inner.
 
 For an external face $\partial_{v_i}$ associated to the vertex $v_i$ above the edge $e_i$, the equation $\partial^{ext}y+\partial^\vee y=0$ means
 $$(-1)^{(p-1)+(n-p)} (-1)^{i} y_{\partial_{e_i}\circ \alpha,e} 
 = -(-1)^n (\partial_{v_i})_*(y_{\alpha,e_1 \ldots \hat{e_i} \ldots e_p});$$
 which again gives the equation
 $$y_{d\alpha, df}=-d_* y_{\alpha, f}$$
 for each $d:T\to T'$ external face.
 Thus by induction on the size on $T$, we obtain  $y_{\alpha,e}=(-1)^e \alpha_*(y_{id_R})$, hence $y=\bar\eta(y_{id_R})$.
\end{proof}

%
%

\section{Bar and Cobar constructions for (co)algebras}\label{Section:Alg}

Recall from Section~\ref{Section:Operads} that there are adjoint functors bar and cobar, $\Bar: Psh(\AA) \to CoPsh(\AA)$ and $\coBar : CoPsh(\AA) \to Psh(\AA)$, which induce a similar adjunction (modulo suspension) between preoperads and precooperads (respectively, $\infty$-operads and $\infty$-cooperads), cf. Theorem~\ref{OperadicTheorem}. Moreover, in each case the unit and counit are quasi-isomorphisms. In Section~\ref{AlgBarETCobar} we extended the first adjunction to an adjunction
$$\coBar : CoPsh^c(\RR) \rightleftarrows Psh(\RR): \Bar$$
between conilpotent copresheaves and presheaves on the larger category $\RR$, which again forms an equivalence of categories up to quasi-isomorphisms. The goal of this section and the next is to show that for a given preoperad $X$, this last adjunction lifts to one between prealgebras over $X$ and precoalgebras over $\Bar(sX)$.

For a preoperad $X$, the precooperad structure on $\Bar(sX)$ depends on a correspondence between pairs of morphisms $R\to U$ and $S \to V$ in $\AA$ on the one hand, and morphisms $R \circ_e S \to U \circ_e V$ in $\AA$ between graftings along a leaf $e$ of $R$ on the other, cf~\cite[Lemma 5.12]{HM21}. We begin by examining a similar correspondence for the category $\RR$ related to pre(co)algebra structures over $\Bar(sX)$. To this end, let $S$ be a fixed object of $\RR$.  Consider the category of two-fold extensions
$$ S \stackrel{\alpha}{\to} T \stackrel{\beta}{\to} U$$
where $\alpha$ is a pruning in $\RR$ and $\beta$ is an arbitrary map in $\RR$.
A morphism from one such extension to another one $ S \stackrel{\alpha'}{\to} T' \stackrel{\beta'}{\to} U'$ is a pair of isomorphisms $T \to T'$ and $U \to U'$ making the following diagram commute:
\begin{equation*}
\xymatrix@M=6pt@R=8pt{
& T \ar[r]^\beta \ar@{->}[dd]^\wr & U\ar@{->}[dd]^\wr  \\
 S\ar[dr]^{\alpha'}\ar[ur]^\alpha &&  \\
 &  T'\ar[r]^{\beta'} & U'.}
\end{equation*} 
Let us suggestively denote this category, in fact a groupoid, by
$$S / (Ext,\RR).$$
We will now give another description of this category. To this end, define for each set $L$ a groupoid $\mathbb G_L$ whose objects are $L$-indexed families of pairs of morphisms $(T_\ell \to U_\ell : \ell \in L)$ in $\RR$. The morphisms in $\mathbb G_L$ from $(T_\ell \to U_\ell)_{ \ell \in L}$ to $(T'_\ell \to U'_\ell)_{ \ell \in L}$ are $L$-indexed families of pairs of isomorphisms $T_\ell \isomap T'_\ell$ and $U_\ell \isomap U'_\ell$  making each of the squares
\begin{equation*}
\xymatrix@M=6pt@R=12pt{
 T_\ell\ar[r] \ar@{->}[d]^\wr  & U_\ell\ar@{->}[d]^\wr  \\
 T'_\ell\ar[r] & U'_\ell }
\end{equation*} 
commute.
Clearly $\mathbb G_L$ is functorial in $L$, in the sense that any function $f:K\to L$ between sets induces a functor $f^* : \mathbb G_L \to \mathbb G_K$.
Also consider the groupoid $(S / \AA)_{iso}$ whose objects are morphisms $\gamma : S\to V$ in $\AA$ and whose morphisms are isomorphisms under $S$.
This groupoid acts on $\mathbb G_-$ with fibre $\mathbb G_{\lambda(V)}$ at $\gamma: S \to V$. So we can form the semi-direct product
$$(S / \AA)_{iso} \ltimes \mathbb G_\lambda.$$
Its objects are pairs $ (S \stackrel{\gamma}{\to} V, (T_\ell \stackrel{\beta_\ell}{\to} U_\ell)_{\ell \in \lambda(V)})$ with $\gamma$ in $\AA$ and $(T_\ell \stackrel{\beta_\ell}{\to} U_\ell)_{\ell \in \lambda(V)}$ an object in $\mathbb G_{\lambda(V)}.$
Its morphisms from such an object to another one $ (S \stackrel{\gamma'}{\to} V', (T'_\ell \stackrel{\beta'_\ell}{\to} U'_\ell)_{\ell \in \lambda(V')})$ are families of isomorphisms as in
\begin{equation*}
\xymatrix@M=4pt@R=14pt{
 & S \ar@{->}[dl]_\gamma \ar@{->}[dr]^{\gamma'}&   \\
 V\ar[rr]_f^\sim && V' }
\quad \quad \quad 
\xymatrix@M=6pt@R=12pt{
 T_\ell\ar[r] \ar@{->}[d]^\wr  & U_\ell\ar@{->}[d]^\wr  \\
 T'_{f(\ell)}\ar[r] & U'_{f(\ell)} }
 \quad 
 (\ell \in \lambda(V))
\end{equation*} 

\begin{lemma}\label{L.Decomp}
 There is an equivalence of categories (groupoids)
 $$ S/ (Ext, \RR) \simeq (S / \AA)_{iso} \ltimes \mathbb G_\lambda.$$
\end{lemma}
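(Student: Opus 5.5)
We will construct a functor $\Phi : S/(Ext,\RR) \to (S/\AA)_{iso} \ltimes \mathbb G_\lambda$ and check that it is fully faithful and essentially surjective. Given an object $S \stackrel{\alpha}{\to} T \stackrel{\beta}{\to} U$, the pruning $\alpha$ cuts $T$ at the leaves of $S$. In the notation of prealgebras, $T$ is the iterated grafting $S \circ_{\ell \in \lambda(S)} T_\ell$ with $T_\ell = T(\alpha)_\ell$ (taking $T_\ell=\eta$ when $\alpha(\ell)$ is a leaf of $T$). Next we factor the composite $\beta\alpha : S \to U$ as an inner map $\gamma : S \to V$ in $\AA$ followed by a pruning $V \mono U$, using the factorisation of Section~1. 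This factorisation is unique up to unique isomorphism under $S$, and $V$ is a pruning of $U$. Cutting $U$ at the leaves of $V$ then gives $U = V \circ_{\ell \in \lambda(V)} U_\ell$.

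The leaves of $V$ are the images of the leaves of $S$, so $\gamma$ induces a bijection $\lambda(S)\cong\lambda(V)$. Through this bijection we index the $T_\ell$ by $\lambda(V)$. The key claim is that $\beta$ restricts to maps $\beta_\ell : T_\ell \to U_\ell$ in $\RR$. The argument is as follows. The edge $\alpha(\ell)$ is sent by $\beta$ to $\gamma(\ell)$, the root of $U_\ell$. Since $\beta$ is root-preserving and faces preserve the tree order, the part of $T$ above $\alpha(\ell)$ lands in the part of $U$ above $\beta\alpha(\ell)$. We can argue this by writing $\beta$ as a composite of elementary inner faces, prunings and isomorphisms and checking each one separately. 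The map $\beta_\ell$ sends the root to the root, so it lies in $\RR$. We then set $\Phi(\alpha,\beta)=(\gamma,(\beta_\ell)_{\ell})$. A morphism of extensions, that is, a pair of isomorphisms $T\cong T'$ and $U\cong U'$, induces an isomorphism $V\cong V'$ under $S$ by uniqueness of the factorisation, together with compatible isomorphisms $T_\ell\cong T'_\ell$ and $U_\ell\cong U'_\ell$. This makes $\Phi$ a functor.

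For the inverse direction, start from $(\gamma,(\beta_\ell:T_\ell\to U_\ell))$. We put $T = S\circ_{\ell} T_\ell$, using $\gamma$ to identify the leaves, and $U = V \circ_\ell U_\ell$. We then take $\alpha$ to be the evident pruning and $\beta = \gamma \circ_\ell \beta_\ell$. The point to check is that the grafting of a map in $\AA$ with maps in $\RR$ is again a morphism in $\RR$. This is the $\RR$-analogue of \cite[Lemma 5.12]{HM21}, and it holds because grafting preserves faces. Applying $\Phi$ to this construction gives back the data up to canonical isomorphism, by uniqueness of the inner–external factorisation and of the decomposition along the leaves of a pruning. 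In the other direction, every extension is isomorphic to the one rebuilt from its image, since $T$ and $U$ are recovered as the graftings. Fully faithfulness then follows from the same uniqueness statements: an isomorphism on the right-hand side assembles uniquely into isomorphisms of $T$ and $U$.

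The main obstacle is the restriction step, that is, showing that $\beta$ maps $T_\ell$ into $U_\ell$ with $\beta = \gamma\circ_\ell \beta_\ell$. We must verify this carefully, especially for the inner part of $\beta$ that contracts edges of $T$ lying in $S$ and for the prunings that add vertices above the leaves of $T$. I would argue by induction on the number of elementary faces in $\beta$. For an inner face contracting an edge $e$ of $U$, there are two cases: either $e$ lies in the part of $U$ coming from $S$, which changes $\gamma$, or $e$ lies above some $\gamma(\ell)$, which changes $\beta_\ell$. For a pruning, the added vertex always sits above some leaf of $T$, so only the corresponding $\beta_\ell$ changes.
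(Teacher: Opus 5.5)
Your proposal is correct and follows essentially the same route as the paper: factor $\beta\alpha$ as an inner map $\gamma : S \to V$ followed by a pruning $V \to U$, write $T$ and $U$ as graftings over the leaves of $S$ and $V$, restrict $\beta$ to maps $\beta_\ell : T_\ell \to U_\ell$, and invert by grafting. The only difference is that you justify the restriction step and the identity $\beta = \gamma \circ_\ell \beta_\ell$ by induction on elementary faces, whereas the paper reads the restriction off from $\beta$ sending $\alpha(\ell)$ to $\varepsilon\gamma(\ell)$ and leaves the remaining checks to the reader.
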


\begin{proof}
We will just indicate how to transform an object in one of these categories to an object in the other, and vice versa, leaving further details to the reader.

First, given $S \stackrel{\alpha}{\to} T \stackrel{\beta}{\to} U$ in $ S/ (Ext, \RR)$, factor $\beta \circ \alpha$ as an inner face followed by an external one, as in 
\begin{equation}\label{lemme61}
\xymatrix@M=4pt@R=14pt{
 S\ar[r]^\alpha \ar@{-->}[d]_\gamma  & T\ar@{->}[d]^\beta  \\
 V\ar@{-->}[r]^\varepsilon & U }
\end{equation} 
The pruning $S \stackrel{\alpha}{\to} T$ exhibits $T$ as a grafting of a tree $T_\ell$ on each leaf $\ell$ of $S$, denoted 
$$T\cong S \circ (T_\ell)_{\ell \in \lambda(S)}.$$
Similarly, $U\cong V \circ (U_{\gamma(\ell)})_{\ell \in \lambda(S)}$, as $\gamma : S \to V$ in $\AA$ induces an isomorphism $\lambda(S) \to \lambda (V)$.
By commutativity of the square above, $\beta$ sends the edge $\alpha(\ell)$ in $T$ to the edge $\varepsilon \gamma(\ell)$ in $U$, so $\beta$ restricts to a map $\beta_\ell$ from the subtree $T_\ell$ of $T$ with root $\alpha(\ell)$ to the subtree $U_\ell$ with root $\varepsilon \gamma (\ell)$, and we obtain a family
$$(\beta_\ell: T_\ell \to U_\ell)_{\ell \in \lambda(S)}$$
of morphisms in $\RR$.
This family together with the map $S \stackrel{\gamma}{\to}V$ in $\AA$ defines an object in $(S / \AA)_{iso} \ltimes \mathbb G_\lambda$.

In the other direction, given $\gamma : S \to V$ in $\AA$ and such a family $(\beta_\ell: T_\ell \to U_\ell)_{\ell \in \lambda(S)}$, we can graft the $T_\ell$'s onto the leaves $\ell$ of $S$ to obtain a pruning $\alpha : S \to T$. And we can graft each tree $U_\ell$ on top of the leaf $\gamma(\ell)$ of $V$ to obtain a pruning $\varepsilon : V \to U$. The map $S \stackrel{\gamma}{\to}V$ and the family of maps $T_\ell \stackrel{\beta_\ell}{\to} U_\ell$ together define a map $\beta : T \to U$ which fits into a commutative square as the one above, and in particular defines an object $(S \stackrel{\alpha}{\to} T \stackrel{\beta}{\to} U)$ in $S/ (Ext, \RR)$. 

As said, we leave it to the reader to check that these two constructions are functorial and mutually inverse.
\end{proof}

\begin{remark}
Consider an object $S \stackrel{\alpha}{\to} T \stackrel{\beta}{\to} U$ in  $S/ (Ext, \RR)$ and the corresponding object $(\gamma,(\beta_\ell: T_\ell \to U_\ell)_{\ell \in \lambda(S)})$
in $(S / \AA)_{iso} \ltimes \mathbb G_\lambda$ as above.
Then the inner edges of $U$ are the inner edges of $V$ together with the roots and inner edges of the $U_\ell$'s. So an enumeration $e$ in $en^+_U$ corresponds (up to permutation) to an enumeration $f$ in $en_V$, a sequence of enumerations $g_\ell$ in $en^+_{U_\ell}$, and the root of $V$ (or of $S$).
\end{remark}

\begin{prop}\label{BarCoalgSurBar}
 (i) Let $X$ be a preoperad. Then for any prealgebra $M$ over $X$, the bar construction $\Bar M$ has the natural structure of a precoalgebra over $\Bar (sX)$.
 
 (ii) If in addition $X$ is an $\infty$-operad and $M$ is an $\infty$-algebra over $X$, then $\Bar M$ is an $\infty$-coalgebra over $\Bar(sX)$.
\end{prop}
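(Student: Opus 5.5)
We will construct, for each pruning $\alpha : S \to T$ in $\RR$, a structure map
$$\nabla^\alpha : \Bar(M)(T) \to \Bar(sX)(S) \otimes \Bar(M)(T/S) = \Bar(sX)(S) \otimes \sbigotimes_{\ell \in \lambda(S)} \Bar(M)(T_\ell).$$
It is easiest to do this on coinvariants, as for $\bar\Delta^a$ in Section~\ref{barcomplexpreoperad}, and then to transport along $\rho$.

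\textbf{Definition of $\nabla^\alpha$.} A generator of $\bBar(M)(T)$ is a triple $(\beta, e, x)$ with $\beta : T \to U$ in $\RR$, $e \in en^+_U$ and $x \in M(U)$. The pair $(\alpha, \beta)$ is an object of $T/(Ext,\RR)$ after precomposition; more precisely, $S \stackrel{\alpha}{\to} T \stackrel{\beta}{\to} U$ is an object of $S/(Ext,\RR)$. By Lemma~\ref{L.Decomp} it corresponds to $\gamma : S \to V$ in $\AA$ together with maps $\beta_\ell : T_\ell \to U_\ell$, where $U \cong V \circ (U_\ell)_\ell$ via the pruning $\varepsilon : V \to U$.

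By the remark following Lemma~\ref{L.Decomp}, we may reorder $e$ (up to a sign $(-1)^\sigma$) as $(r, f, g_{\ell_1}, \ldots, g_{\ell_k})$. Here $r$ is the root of $V$, $f \in en_V$, and $g_\ell \in en^+_{U_\ell}$. Apply the prealgebra structure map $\tau^\varepsilon(x) = x_0 \otimes \bigotimes_\ell x_\ell \in X(V) \otimes \bigotimes_\ell M(U_\ell)$, and set
$$\bar\nabla^\alpha(\beta,e,x) = \pm\, (\gamma, f, s x_0) \otimes \sbigotimes_\ell (\beta_\ell, g_\ell, x_\ell).$$
The sign is the Koszul sign for dropping $r$ and shuffling the $g_\ell$ past the $x$'s, exactly as in the formula for $\bar\Delta^a$. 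The suspension $s$ compensates for the dropped root edge, so that the total degree is $0$.

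\textbf{Well-definedness and axioms.} Well-definedness on coinvariants follows from the functoriality of the equivalence in Lemma~\ref{L.Decomp} under isomorphisms and from the naturality of $\tau$. After that, three checks remain.
\begin{enumerate}
\item Naturality in $T$ for the copresheaf structure, i.e.\ compatibility with $\beta'_*$ where $\beta' : T \to T'$ extends a pruning. The formula for $\beta'_*$ on coinvariants just composes, so this reduces to the correspondence of Lemma~\ref{L.Decomp}.
\item Coassociativity with respect to $\bar\Delta$ on $\bBar(sX)$, together with associativity for nested prunings $S \to T \to T'$. These follow from the associativity conditions on $\tau$ (nested and parallel) and the grafting associativity of $X$.
\item Unitality, which is immediate from unitality of $\tau$ and $X(\eta)=k$.
\end{enumerate}

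\textbf{Compatibility with differentials (main obstacle).} The central check is that $\bar\nabla^\alpha$ is a chain map, i.e.\ $\bar\nabla\bar\partial = (\bar\partial \otimes 1 + 1 \otimes \bar\partial)\bar\nabla$. I will split $\bar\partial$ on $\bBar(M)(T)$ into three kinds of terms $e_i^* - v_i^*$, according to where $e_i$ lies:
\begin{enumerate}
\item $e_i$ is an inner edge of $V$. These terms match the inner terms of the differential of $\bBar(sX)(S)$, using naturality of $\tau$ in the $\AA$-direction, i.e.\ the commuting triangle in Section~\ref{defalgebras}.
\item $e_i$ lies in some $U_\ell$, either as an inner edge or as its root. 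These match the terms of $\bar\partial$ on $\bBar(M)(T_\ell)$, using naturality of $\tau^\varepsilon$ with respect to the presheaf maps on $U_\ell$.
\item $e_i = r$ is the root of $V$, or $v_i$ is a top vertex of $V$ with $e_i$ a leaf of $S$. These are dropped by the construction, and the terms of this kind must cancel in pairs or vanish. The key identity is the associativity $\tau^{\varepsilon'} = (\text{id}\otimes\tau)\circ\tau$ for the composite pruning obtained by chopping a top vertex of $V$, played against the inner contraction of the corresponding edge. This mirrors the identity $\partial_{v_j}\partial_{e_i}=\partial_{v_j}\partial_{v_i}$ used for $\bar\partial\bar\partial=0$.
\end{enumerate}
Sorting out the signs in the third kind of term is where I expect the real work to lie. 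I would defer it to an appendix-style verification analogous to Appendix~\ref{AppendixSigns}.

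\textbf{Part (ii).} Suppose $X$ is an $\infty$-operad and $M$ an $\infty$-algebra. Then, up to the isomorphism $\rho$ and reindexing via Lemma~\ref{L.Decomp}, $\bar\nabla^\alpha$ is a direct sum of the maps $\tau^\varepsilon$, which are quasi-isomorphisms. Since the direct sum is compatible with a filtration by the number of vertices of $V$ and of the $U_\ell$, a spectral-sequence (filtration) comparison shows that $\bar\nabla^\alpha$ is a quasi-isomorphism. The filtration argument is legitimate because all sums are finite in each bounded filtration degree, which is guaranteed by the finite support of elements of $\Bar(M)$.
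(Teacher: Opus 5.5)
This is essentially the paper's proof: your coinvariant formula for $\bar\nabla^\alpha$ is the one in Remark~\ref{Rem64}, your checks are those sketched in Appendix~\ref{AppendixSigns}, and in (ii) your filtration by the number of vertices of $U$ makes explicit a step the paper leaves implicit when it calls $\nabla^\alpha$ a product of the quasi-isomorphisms $\tau^\varepsilon$ (what makes that filtration argument work is that the filtration is bounded below and exhaustive, not finite support).

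One correction to your differential analysis: your third kind of terms does not occur.
\begin{itemize}
\item The root edge of $U$ is never contracted.
\item A vertex of $V$ that is top in $U$ (including the root vertex of $U$ when $S\neq\eta$) has all its input edges in the image of $\beta$, so $\beta$ does not factor through the corresponding pruning.
\item Chopping a corolla $U_\ell$ sitting over a leaf $\ell$ with $T_\ell=\eta$ is already a term of $\bar\partial$ on $\bBar(M)(T_\ell)$.
\end{itemize}
So the chain-map check is a term-by-term bijection using only naturality of $\tau$ plus Koszul signs. Associativity of $\tau$ is needed only for coassociativity, not for compatibility with the differential.
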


The constructions involved in the proof of the proposition will ostensibly be functorial in $M$. So we obtain a commutative diagram of functors
\begin{equation}
\xymatrix@M=4pt@R=14pt{
\infty Alg(X)\ar@{^{(}->}[r] \ar[d]^\Bar & Prealg(X)\ar[r] \ar[d]^\Bar & Psh(\RR) \ar[d]^\Bar \\
\infty Coalg \Bar(sX) \ar@{^{(}->}[r]& Precoalg \Bar(sX) \ar[r]& CoPsh^c(\RR) 
}
\end{equation} 
where the horizontal functors are the inclusion and the forgetful functor, respectively.

\begin{proof}
 (i) Let $M$ be a prealgebra over $X$. The structure on $\Bar M$ of coalgebra over $\Bar (sX)$ is given by maps of degree zero
 $$\Bar M(T) \to \Bar(sX)(S) \otimes \Bar(M)(T / S),$$
 one such map for every pruning $\alpha: S \to T$.
 We will instead define maps
 $$\nabla^\alpha : \Bar M(T) \to \Bar(X)(S) \otimes \Bar(M)(T / S)$$
 of degree $-1$, and obtain the required structure maps  by composition with the suspension $\Bar X \to \Bar(sX)$.
 
 So fix a pruning $\alpha : S \to T$, and write $\Bar M(T)$, as ``invariant product''
 $$\Bar M (T) = \Big( \prodc_{\beta,e} M(U)[e] \Big)^{inv}$$
 where $\beta$ runs over morphisms $\beta: T \to U$ in $\RR$ and $e$ over enumerations in $en_U^+$.
 So elements of $\Bar M(T)$ are finitely supported functions $\omega$ with $\omega_\beta(e) \in M(U)$ of degree $\omega-e$.
 To define $\nabla^\alpha(\omega)$, we proceed as in Section~\ref{barcomplexpreoperad} and first consider the isomorphism
  $$\prodc_{\gamma,f} X(V)[f] \otimes  \prodc_{\beta_\ell,g_\ell} M(U_\ell)[g_\ell]  
  \stackrel{\mu}{\longrightarrow}  
  \prodc_{\gamma,\beta_\ell, f, g_\ell} X(V) \otimes \sbigotimes_\ell M(U_\ell) [f+\Sigma g_\ell].$$
  Here $\gamma$ ranges over morphisms $S \to V$ in $\AA$, $\ell$ over leaves of $S$, and $\beta_\ell$ over morphisms $T_\ell \to U_\ell$ in $\RR$ where $T_\ell$ is the part of $T$ above the edge $\alpha(\ell)$ as in the proof of Lemma~\ref{L.Decomp}.
  Moreover, $f$ ranges over enumerations in $en_V$ and each $g_\ell$ over those in $en^+_{U_\ell}$. With the notation thus explained, the map $\mu$ is defined by
  $$\mu(\omega, (\psi_\ell)_\ell)_{\gamma, \beta_\ell}(f, g_\ell)
  =
  (-1)^{f \cdot \Sigma g_\ell} (\omega_\gamma(f), (\psi_\ell)_{\beta_\ell}(g_\ell)_\ell).$$
  This map $\mu$ induces a similar isomorphism after taking invariants.
  
  Now we can introduce a map $\nabla'$ of degree $-1$
  $$\prodc_{\beta,e} M(U)[e] \to \prodc_{\gamma,\beta_\ell, f, g_\ell} X(V) \otimes \bigotimes_\ell M(U_\ell) [f+\Sigma g_\ell]$$
  whose value at $\zeta \in \displaystyle{\prodc_{\beta,e} M(U)[e]}$ is defined as follows.
  Take any indices in the codomain product, i.e., a map $\gamma : S \to V$ in $\AA$, maps $\beta_\ell : T_\ell \to U_\ell$ in $\RR$ and enumerations $f$ in $en_V$ and $g_\ell$ in $en^+_{U_\ell}$. Then $\gamma$ and the $\beta_\ell$'s together define an object in $(S / \AA)_{iso} \ltimes \mathbb G_\lambda$. Let $S \stackrel{\alpha}{\to} T \stackrel{\beta}{\to} U$  be the object in $S/ (Ext, \RR)$ corresponding to it under Lemma~\ref{L.Decomp}, where we have used the same notation as in the proof of the lemma,
  and let $e \in en^+_U$ be the enumeration of $U$ given by the root of $U$, the enumeration $f$ and the enumerations $g_\ell$'s in some order, cf. the previous Remark. Then define
  $$\nabla'(\zeta)_{\gamma, \beta_\ell}(f,g_\ell)=(-1)^\zeta \theta^\varepsilon (\zeta_\beta(e))$$
  where $\theta^\varepsilon : M(U) \to X(V) \otimes \sbigotimes_\ell M(U_\ell)$ is the structure map of $M$ as algebra over $X$ for the map $\varepsilon : V \to U$ in $\RR$ (cf. diagram~(\ref{lemme61}) in the proof of Lemma~\ref{L.Decomp}). A priori the values of $\nabla'(\zeta)$ depend (up to sign) on how we reconstruct the enumeration $e$ from $f$ and the $g_\ell$'s.
  But $\nabla'(\zeta)$ gives a well-defined element in the invariant product if $\zeta$ is invariant itself.
  In this way, we obtain for any given pruning $\alpha: S\to T$ as above a map of degree $-1$
 $$\nabla^\alpha : \Bar M(T) \to \Bar(X)(S) \otimes \Bar(M)(T / S).$$
  We claim that this map satisfies the required associativity and unit conditions, modulo suspension; i.e., the map 
 $(\Bar s \otimes id) \circ \nabla^\alpha : \Bar M(T) \to \Bar(X)(S) \otimes \Bar(M)(T / S)$ does. The reader may wish to check this directly. But it is notationally more convenient to do this for a description of the maps in terms of coinvariants as in the next remark. Proving that the differential is a coderivation with respect to this map $\nabla^\alpha$ is also easier in terms of coinvariants.
 
 (ii) Again consider an external face map $\alpha : S \to T$. By invariance, we can fix enumerations and use the equivalence of Lemma~\ref{L.Decomp} to write the structure map $\nabla^\alpha$ of  $\Bar M$ as
 $$\Big( \prodc_{\beta : T \to U} M(U) \Big) ^{inv} \to
  \Big( \prodc_{\beta : T \to U} X(V) \otimes M(U / V)\Big) ^{inv}$$
  where $V$ is obtained by factoring $\beta \circ \alpha : S \to U$ as in Lemma~\ref{L.Decomp}. Then the structure map becomes a product of structure maps $M(U) \to X(V) \otimes M(U / V)$ for $M$ itself for external face maps $\varepsilon : V \to U$. These are all quasi-isomorphisms if $M$ is an algebra over the $\infty$-operad $X$, hence so is $\nabla^\alpha$, proving part (ii).
\end{proof}

\begin{remark}\label{Rem64}
 The coalgebra structure map $ \Bar M(T) \to \Bar(X)(S) \otimes \Bar(M)(T / S)$ for a given external face map $\alpha : S \to T$ described above can be transfered to the bar construction in terms of coinvariants via the isomorphism $\rho : \bBar M \to \Bar M$, and it will be useful to be more explicit about this. To do so, consider the following diagram, where we have deleted the subscripts for coinvariants and superscripts for invariants in the bottom and top rows to save some space:
\begin{equation*}
\hspace{-2cm}
\xymatrix@M=4pt@C=52pt{
\displaystyle{\prodc_{\beta,e} M(U)[e] \ar[r]^{\nabla^\alpha \quad \ \quad \ \quad } \ar@/^2pc/[rr]^{\nabla'}} &
\displaystyle{\prodc_{f} X(V)[f] \otimes \sbigotimes_\ell \prodc_{g_\ell} M(U_\ell) [g_\ell]}
\ar[r]^\mu
&
\displaystyle{\prodc_{f, g_\ell} X(V) \otimes \sbigotimes_\ell M(U_\ell) [f+\Sigma g_\ell]}\\
\displaystyle{\sbigoplus_{\beta,e} M(U)[e] \ar@{-->}[r]^{\overline \nabla^\alpha  \quad \ \quad \ \quad }\ar[u]^\rho} &
 \displaystyle{\sbigoplus_{f} X(V)[f] \otimes \sbigotimes_\ell \sbigoplus_{g_\ell} M(U_\ell) [g_\ell]}
\ar[r]^{\overline \mu} \ar[u]^\rho &
\displaystyle{\sbigoplus_{f, g_\ell} X(V) \otimes \sbigotimes_\ell M(U_\ell) [f+\Sigma g_\ell]}\ar[u]^\rho
}
\end{equation*} 
(We have also omitted the indices $\gamma, \beta_\ell$ going with $f$ and $g_\ell$ to save space.)
The map $\bar \mu$ making the right hand square commute is given by
$$\bar \mu ((f,x)\otimes \sbigotimes_\ell (g_\ell, y_\ell))=(-1)^{f \cdot (\Sigma g_\ell + y_\ell)} ((f,(g_\ell)_\ell), x\otimes \sbigotimes_\ell y_\ell).$$
Using the explicit description of $\nabla'$ in the previous proof, we find the following description of the map 
$$\bar \mu \overline\nabla^\alpha :
\sbigoplus_{\beta,e} M(U)[e] \to 
\sbigoplus_{f} X(V)[f] \otimes \sbigotimes_\ell \sbigoplus_{g_\ell} M(U_\ell) [g_\ell].
$$
Fix a summand in the domain of $\overline \nabla^\alpha$, i.e., a pair consisting of a morphism $\beta : T \to U$ in $\RR$  and an enumeration $e$ in $en^+_U$. By the correspondence of Lemma~\ref{L.Decomp}, 
$S \stackrel{\alpha}{\to} T \stackrel{\beta}{\to} U$ and $e$ correspond to a morphism $\gamma : S \to V$ in $\AA$ and a morphism $\beta_\ell: T_\ell \to U_\ell$ in $\RR$ for each leaf $\ell$ of $S$ (or equivalently of $V$), and enumerations $f$ in $en_V$ and $g_\ell$ in $en^+_{U_\ell}$ such that $e=(root, f, (g_\ell)_\ell)$ after a suitable permutation.
Then 
$$\bar \mu \overline\nabla^\alpha (e,y)= (-1)^{e+y} (f, (g_\ell)_\ell, \theta^\varepsilon (y))$$
where $\varepsilon : V \to U$ as in Lemma~\ref{L.Decomp} above. The sign 
$(-1)^{e+y}$ corresponds to the sign $(-1)^\zeta$ in the definition of $\nabla'$.
Using the sign for $\bar \mu$, this gives the following formula for $\overline \nabla^\alpha$:
$$\overline\nabla^\alpha (e,y)= (-1)^{f \cdot (\Sigma g_\ell + y_\ell)+e+y}
((f,(g_\ell)_\ell), x\otimes \sbigotimes_\ell y_\ell).$$
where $e=f+\Sigma g_\ell +1$ and $\theta^\varepsilon(y)= \sum x\otimes \sbigotimes_\ell y_\ell$ (rewriting the last term as $(f,x)\otimes \sbigotimes_\ell (g_\ell, y_\ell)$ in addition introduces the usual Koszul sign).
\end{remark}

\begin{example}
 Let $P$ be a ``classical'' operad in the category $\Ch$ of chain complexes, with associated ``strict'' $\infty$-operad $NP : \AAop \to \Ch$.
 The bar complex $\Bar(sNP) : \AA \to \Ch$ defines a strict cooperad $BP$ (denoted $C_{\Bar(sNP)}$ in Section~\ref{2.7}), with
 $$BP(n)=\Big( \prod_{\alpha: C_n \to T,e} sNP(T)[e] \Big)^{inv}$$
 where $\alpha$ and $e$ range respectively over morphisms in $\AA$ and enumerations $e \in en_T$.
 Its cooperad structure maps $\nabla_i : BP(n)\to BP(k) \otimes BP(\ell)$ for $k+\ell=n+1$ and $i=1, \ldots, k$ can explicitly be described as follows.
 The codomain of $\nabla_i$ is isomorphic to an invariant product
 $$ \Big( \prod_{\beta, f, \gamma, g} sNP(R)[f] \otimes sNP(S)[g] \Big)^{inv}$$
 where the indices range over morphisms $\beta : C_k \to R$ and $\gamma : C_\ell \to S$ in $\AA$,
together with enumerations.
 For $\omega$ in $BP(n)$, the value of $\nabla_i(\omega)$ at $\beta,f, \gamma,g$ is the image of $\omega_\alpha(e)$ under the isomorphism
 $$sNP(T)[e] \to sNP(R)[f] \otimes sNP(S)[g]$$
 where $\alpha$ is the composition $C_n \to C_k \circ_i C_\ell \to R \circ_{\beta(i)} S$ and $e$ is the enumeration of $R \circ S$ given by $\beta(i), f$ and $g$.
 
 Now let $A$ be a $P$-algebra in the classical sense, with associated functor 
 $N(P,A) : \RR^{op}\to\Ch$ as in Example~\ref{2.1}. Its bar complex $\Bar N(P,A) : \RR \to \Ch$ has the structure of a coalgebra over $\Bar(sNP)$.
 Write
 $$BA=\Bar(N(P,A))(\eta)= \Big( \prodc_{U,f} NP(U)[f] \otimes A^{\otimes \lambda U} \Big)^{inv}$$
 where $U$ ranges over objects of $\RR$ and $f$ over elements in $en^+_U$.
 Then $BA$ is a coalgebra over $BP$, with structure maps
 $$BA \stackrel{\nabla}{\to} BP(n) \otimes BA^{\otimes n}$$
 described as follows.
The codomain of $\nabla$ is (isomorphic to) an invariant product over maps $\alpha : C_n\to T$ in $\AA$, trees $U_1, \ldots, U_n$ and enumerations $e\in en_T$ and $f_i \in en^+_{U_i}$. The map $\nabla$ is the one making the diagram with the projection on such a factor
\begin{equation*}
\xymatrix@M=4pt@R=16pt@C=40pt{
BA\ar@{-->}[rr]^\nabla \ar[d]^{proj_{U,e}} & & BP(n) \otimes BA^{\otimes n}\ar[d]_{proj} \\
   NP(U)  \otimes A^{\otimes \lambda U} \ar[rr] & &
  sNP(T) \otimes NP(U_1) \otimes \ldots \otimes NP(U_n) \otimes \sbigotimes_{i=1}^n A^{\otimes \lambda U_i} 
 }
\end{equation*} 
commute, where $U$ is the tree obtained by grafting $U_1, \ldots, U_n$ onto the leaves $\alpha(1), \ldots, \alpha(n)$ of $T$, and $f$ is the enumeration given by the root, $e$ and $f_1, \ldots, f_n$ together. (We have not indicated the shifts in degree in the diagram. The enumeration $f$ is one longer then $e, f_1, \ldots, f_n$ together, accounting for the suspension in the bottom morphism.)
\end{example}

We now turn to the analogue of Proposition~\ref{BarCoalgSurBar} for the cobar construction $\coBar$ adjoint to $\Bar$. The reader will observe that this cobar analogue is considerably easier than the bar case just dealt with.

\begin{prop}\label{CobarAlgsurCobar}
 (i) Let $Y$ be a precooperad. Then for any precoalgebra $N$ over $Y$, the cobar complex $\coBar N$ has the natural structure of a prealgebra over the preoperad $\coBar(s^{-1}Y).$
 
 (ii) If in addition $Y$ is an $\infty$-cooperad and $N$ is an $\infty$-coalgebra over $Y$, then $\coBar N$ is an $\infty$-algebra over $\coBar(s^{-1}Y).$
\end{prop}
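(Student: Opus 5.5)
The plan is to define the structure maps on $\coBar N$ directly in terms of coinvariants, using the decomposition of Lemma~\ref{L.Decomp}. This mirrors the construction of $\nabla^\alpha$ in Proposition~\ref{BarCoalgSurBar}, but it is simpler because $\coBar$ is a direct sum and so no invariant products are involved. By Section~\ref{defalgebras}, a prealgebra structure is determined by the maps $\tau^\alpha$ for prunings $\alpha : S \to T$. So I would fix a pruning $\alpha$ and define a map of degree $+1$
$$\tau'^\alpha : \coBar(N)(T) \to \coBar(Y)(S) \otimes \coBar(N)(T/S), \qquad \coBar(N)(T/S)=\sbigotimes_{\ell \in \lambda(S)} \coBar(N)(T_\ell).$$
The required $\tau^\alpha$ of degree $0$ is then $(\coBar s^{-1} \otimes id)\circ \tau'^\alpha$. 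For non-prunings, $\tau^\alpha$ is forced by the rule $\tau^{\alpha'\gamma}=(\gamma^*\otimes id)\tau^{\alpha'}$.

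To define $\tau'^\alpha$, take a generator $(\beta, y, e)$ with $\beta : T \to U$ in $\RR$, $y \in N(U)$ and $e \in en^+_U$. Lemma~\ref{L.Decomp} applied to $S \stackrel{\alpha}{\to} T \stackrel{\beta}{\to} U$ yields a map $\gamma : S \to V$ in $\AA$, a pruning $\varepsilon : V \to U$, and maps $\beta_\ell : T_\ell \to U_\ell$ in $\RR$. By the Remark after that lemma, after a permutation (which costs a sign in the coinvariants) we may write $e=(r, f, (g_\ell)_\ell)$, where $r$ is the root, $f \in en_V$ and $g_\ell \in en^+_{U_\ell}$. Applying the coalgebra structure map $\nabla^\varepsilon(y)= y_0 \otimes \sbigotimes_\ell y_\ell \in Y(V)\otimes \sbigotimes_\ell N(U_\ell)$, I set
$$\tau'^\alpha(\beta,y,e)=\pm\,(\gamma, y_0, f)\otimes \sbigotimes_\ell (\beta_\ell, y_\ell, g_\ell).$$
Here the sign is $(-1)^y$ from dropping the root $r$, followed by the Koszul sign of the shuffle $tw$, exactly as for $\nabla_a$ in the cobar complex of a precooperad. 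The routine checks are the following. First, $\tau'^\alpha$ is well defined on coinvariants: permutations of $e$ change both sides by the same sign, and isomorphisms of $U$ under $T$ correspond to morphisms in $(S/\AA)_{iso}\ltimes\mathbb G_\lambda$ by Lemma~\ref{L.Decomp}. Second, $\tau'^\alpha$ is natural for the presheaf structure $\delta^*(\beta,y,e)=(\beta\delta,y,e)$; this holds because precomposition simply reindexes the decomposition. Third, unitality holds, since for $S=\eta$ we have $V=\eta$ and $\nabla^\varepsilon$ is the canonical isomorphism. Fourth, associativity for nested and parallel prunings follows by iterating Lemma~\ref{L.Decomp} and using the associativity of $\nabla$ for $N$ together with the precooperad maps $\theta$ of $Y$.

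The main obstacle is compatibility with the differentials, namely that $\tau'^\alpha\partial = -(\partial\otimes 1 + 1\otimes\partial)\tau'^\alpha$. The terms involving $\partial_N$ match the $\partial_Y \otimes 1$ and $1\otimes\partial_N$ terms because $\nabla^\varepsilon$ is a chain map. For the terms of $\partial^\vee$, I would split the maps $d : U \to U'$ (adding an inner edge, or adding a top vertex) according to where the new edge lands after decomposing $U'$. If it is an inner edge of $V$, the term matches a term of the differential of $\coBar(Y)(S)$; this uses naturality of $\nabla^\varepsilon$ in $V$ for inner maps. If it lies in some $U_\ell$, including the case of a top vertex grafted onto a leaf of $V$ so that $U_\ell=\eta$ becomes a corolla whose root is the new edge, it matches the differential of $\coBar(N)(T_\ell)$; this uses naturality of $\nabla$ for maps in $\RR$ on the $U_\ell$ factors. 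The delicate points are the case $U_\ell=\eta$, where the root convention $en^+(\eta)=\emptyset$ must produce exactly the external vertex terms, and tracking signs under reordering of $e$. I would verify this on coinvariants with $e$ ordered as $(r,f,(g_\ell))$, imitating the appendix computation for $\nabla_a$.

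For (ii), fix representatives of the isomorphism classes and enumerations. Via Lemma~\ref{L.Decomp}, the map $\tau^\alpha$ then becomes a direct sum, indexed by the classes of $(\gamma, (\beta_\ell))$, of shifts of the maps $\nabla^\varepsilon$ for prunings $\varepsilon : V\to U$, with the differential filtered by tree size. If each $\nabla^\varepsilon$ is a quasi-isomorphism, a filtration argument on the number of vertices, with the $\partial_N$-part as associated graded, shows that $\tau^\alpha$ is a quasi-isomorphism.
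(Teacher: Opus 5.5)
Your part (i) is the paper's construction. The summand $N(U)[-e]$ indexed by $(\beta,e)$ goes, via Lemma~\ref{L.Decomp} and $e=(r,f,(g_\ell))$, to the summand indexed by $(\gamma,f),(\beta_\ell,g_\ell)$ through the coalgebra map of the pruning $\varepsilon\colon V\to U$, with the desuspension absorbing the dropped root. Your case analysis of $\partial^\vee$ (new edge in $V$ versus in some $U_\ell$, including $U_\ell=\eta$) correctly unpacks what the paper says in one sentence.

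\textbf{The gap is in (ii).} The vertex-count filtration you invoke is \emph{decreasing}, $F_p=\{|U|\ge p\}$, because $\partial^\vee$ raises the number of vertices. Moreover $\coBar(N)(T)$ is a \emph{direct sum} over infinitely many $U$. Such a filtration is exhaustive and Hausdorff but not complete, so a quasi-isomorphism on associated gradeds does not give a quasi-isomorphism. For example, $\partial a_p=b_p-b_{p+1}$ on $\bigoplus_p(ka_p\oplus kb_p)$ has acyclic associated graded but nonzero homology. This is the same phenomenon that stops the classical cobar construction from preserving quasi-isomorphisms without connectivity hypotheses. The paper's own justification of (ii) (``evident from the explicit description'') has the same hole, and (ii) in fact fails as stated.

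\textbf{Counterexample.} Let $U_p$ be the binary comb with $p+1$ vertices. Let $K$ be the copresheaf with:
\begin{itemize}
\item $K(U_p)=ka_p\oplus kb_p$, $|a_p|=p$, $\partial a_p=b_p$;
\item every morphism $U_p\to U_{p+1}$ acting by $a_p\mapsto b_{p+1}$, $b_p\mapsto 0$;
\item isomorphisms acting trivially, all other non-invertible morphisms acting by zero, and $K=0$ on non-combs.
\end{itemize}
Then $K$ is acyclic and conilpotent. Take $\nabla^\alpha=\mathrm{id}$ for $S=\eta$ and $\nabla^\alpha=0$ otherwise. With these structure maps $K$ is an $\infty$-coalgebra over $Y=\underline k$, since every non-unit structure map goes between acyclic complexes.

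Combs admit no inner extensions. Hence $\coBar(K)(\eta)$ lives in degrees $-1,-2$, with $\partial[U_p,a_p]=[U_p,b_p]+c_p[U_{p+1},b_{p+1}]$ and $c_p\neq0$, which gives $H_{-2}\cong k$.

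Now look at the structure map of $\coBar K$ for the pruning $\mathrm{id}_{C_2}$. It is zero, because every $\varepsilon$ that occurs has $V=C_2\neq\eta$. Its target $\coBar(s^{-1}Y)(C_2)\otimes\coBar(K)(\eta)^{\otimes 2}$ has nonzero homology, so this map is not a quasi-isomorphism.

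\textbf{What would fix it.} You need a boundedness hypothesis, for example $N$ and $Y$ uniformly bounded above. Then only finitely many vertex counts occur in each total degree, the filtration is finite degreewise, and your filtration argument goes through.
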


\begin{proof}
 As in the case of Proposition~\ref{BarCoalgSurBar} for the bar construction, Part (ii) will be evident from the explicit description of the structure maps involved, and we will only prove Part (i).
 
 Recall that for a tree $T$,
$$\coBar(N)(T)=\Big(\bigoplus_{\beta: T\to U} N(U) \otimes k[en^+_U]\Big)_{coinv}$$
$$= \Big(\bigoplus_{\beta,e} N(U)[-e]\Big)_{coinv}$$
where in the first sum, $y \otimes e$ has degree $y-e$. We need to describe for each external face map (pruning) $S \stackrel{\alpha}{\to} T$ a map 
$$\coBar(N)(T) \to \coBar(s^{-1}Y)(S) \otimes \coBar(N)(T / S)$$
Consider a summand of $\coBar(N)(T) $ labelled by $\beta : T \to U$ and $e$.
By Lemma~\ref{L.Decomp}, the composition $s \stackrel{\alpha}{\to} T \stackrel{\beta}{\to} U$ corresponds to a morphism $\gamma: S \to V$ in $\AA$ together with prunings $\beta_\ell: T_\ell \to U_\ell$, one for each leaf $\ell$ of $S$, such that $T$ is the grafting of the $T_\ell$'s onto $S$ and $U$ is that of the $U_\ell$'s onto $V$, while we denoted the corresponding pruning by $\varepsilon : V \to U$. Moreover, the enumeration $e\in en^+_U$ can be taken to be of the form $(root, f, (g_\ell))$ where $f \in en_V$ and $g_\ell \in en^+_{U_\ell}$ (for some chosen ordering of the leaves of $S$). The precoalgebra structure of $N$ gives a morphism 
$$\theta^\varepsilon : N(U)[-e] \to s^{-1}Y(V)[-f] \otimes N (V /U)[-\Sigma g_\ell]$$
(where the desuspension compensates for the fact that $f+\Sigma g_{\ell}$ differs from $e$ by $1$). The required map is the map making each diagram of the form
\begin{equation*}
\xymatrix@M=4pt@R=16pt@C=40pt{
\coBar(N)(T)\ar@{-->}[rr] & & 
\coBar(s^{-1}Y)(S) \otimes \coBar(N)(T / S) \\
    N(U)[-e] \ar[rr]^{\theta^\varepsilon \quad \ \quad \ } \ar@{^{(}->}[u]  & &
 s^{-1}Y(V)[-f] \otimes N (V /U)[-\Sigma g_\ell] \ar@{^{(}->}[u]
 }
\end{equation*} 
commutes, where the vertical maps are inclusion of summands
(recall that $\coBar(N)(T / S)$ is the coproduct
$\displaystyle{\sbigoplus_{\beta_\ell, g_\ell} \sbigotimes_\ell N(U_\ell) [-g_\ell]}$).
The compatibility of this map with the differential is induced by the compatibility of the structure map $\theta$ with the differential.
\end{proof}

\section{Duality for algebras over $\infty$-operads}

We will now turn to the problem of lifting the adjunction of Theorem~\ref{OperadicTheorem} to prealgebras and precoalgebras, as stated in Theorem~\ref{ThAdjAlgCoalg} below.

To prepare for the proof of this theorem, let $X$ be a preoperad, and let $M$ be an $X$-prealgebra and $N$ be a $\Bar(sX)$-coalgebra.
Consider a morphism of presheaves $\varphi :\coBar N \to M$ over $\RR$. We wish to simplify the condition for $\varphi$ to be a map of prealgebras over $ \coBar(s^{-1} \Bar(sX)) \stackrel{\varepsilon}{\to} X$, 
or in other words, for $\varepsilon_! \coBar N \to M$ to be a map of prealgebras over $X$.
To do so, recall that for a tree $T$, we have
$$\coBar(N)(T)=\Big(\bigoplus_{\beta: T\to U,e} N(U)[-e]\Big)_{coinv}.$$
We will write
$$j_{\beta,e}: N(U)[-e] \to \coBar N (T)$$
for the canonical map. The structure of $\varepsilon_! \coBar N $ as an $X$-algebra gives for each pruning $\alpha : S\to T$ and each $\beta :T \to U$ and $e\in en^+_U$ a map
$$\nabla^\alpha_{\beta,e} = \nabla^\alpha \circ j_{\beta,e}: N(U)[-e] \to
X(S) \otimes \sbigotimes_\ell N(U_\ell) [-g_\ell] $$
with $U_\ell$ and $g_\ell$ as before, cf. Lemma~\ref{L.Decomp} and the remark following it.
In particular, for $\beta$ the identity, we have maps
$$\nabla^\alpha_{1,e}: N(T)[-e] \to
X(S) \otimes \sbigotimes_\ell N(T_\ell) [-g_\ell]$$
and the maps $\beta_\ell$ are also identities.

\begin{lemma}
 A map $\varphi :\coBar N \to M$ of presheaves over $\RR$ defines a map of $X$-prealgebras $\varepsilon_! \coBar N \to M$ if and only if for each pruning $\alpha : S \to T$ and each enumeration $e\in en^+_T$, the diagram
 \begin{equation*}
\xymatrix@M=4pt@R=16pt@C=40pt{
N(T)[-e]\ar@{->}[rr]^{\nabla^\alpha_{1,e} \quad \ \quad \ } \ar[d]^{\varphi_{T, 1, e}} & & 
 X(S) \otimes \sbigotimes_\ell N(T_\ell) [-g_\ell] \ar[d]^{1 \otimes \sbigotimes \varphi_{T_\ell, 1, g_\ell}} \\
    M(T) \ar[rr]^{\theta^\alpha \quad \ \quad \ }   & &
  X(S) \otimes \sbigotimes_\ell M(T_\ell)
 }
\end{equation*} 
 commutes, where $\varphi_{T,1,e}=\varphi_T \circ j_{1,e}$ and similarly for $\varphi_{T_\ell, 1, g_\ell}$.
\end{lemma}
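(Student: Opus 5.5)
The condition for $\varphi$ to be a map of $X$-prealgebras $\varepsilon_!\coBar N \to M$ is that for every pruning $\alpha : S\to T$ the square
$$(1\otimes \sbigotimes_\ell \varphi_{T_\ell})\circ \nabla^\alpha = \theta^\alpha\circ \varphi_T$$
of maps $\coBar(N)(T)\to X(S)\otimes \sbigotimes_\ell M(T_\ell)$ commutes. Here $\nabla^\alpha$ is the structure map of $\varepsilon_!\coBar N$, i.e.\ the map of Proposition~\ref{CobarAlgsurCobar} followed by the counit $\varepsilon\otimes 1$. Since $\coBar(N)(T)$ is a quotient of the direct sum of the $N(U)[-e]$, this is equivalent to commutativity after precomposing with each $j_{\beta,e}$. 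The case $\beta=1_T$ gives exactly the stated diagram, so the ``only if'' direction is immediate. For the ``if'' direction, I will show that every summand $(\beta,e)$ reduces to an identity summand by using naturality.

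Fix $\beta : T\to U$ and $e\in en^+_U$, and let Lemma~\ref{L.Decomp} give $\gamma : S\to V$ in $\AA$, the pruning $\varepsilon : V\to U$ and the maps $\beta_\ell : T_\ell\to U_\ell$. In $\coBar N$ we have $j_{\beta,e}=\beta^*\circ j_{1_U,e}$, where $j_{1_U,e}$ is the identity summand of $\coBar(N)(U)$, because the presheaf structure is given by precomposition. Since $\varphi$ is a map of presheaves, $\varphi_T\circ j_{\beta,e}=\beta^*\varphi_U\circ j_{1_U,e}$. Next I will use naturality of $\theta$ in $M$ with respect to the morphism $(\gamma,\beta)$ from the pruning $\alpha$ to the pruning $\varepsilon$ in $S/(Ext,\RR)$:
$$\theta^\alpha\circ\beta^*=(\gamma^*\otimes\sbigotimes_\ell\beta_\ell^*)\circ\theta^\varepsilon.$$
This is the naturality or associativity condition in the definition of prealgebra, together with the factorisation remark in Section~\ref{defalgebras}. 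The same naturality holds for $\nabla$ on $\coBar N$, namely $\nabla^\alpha\circ j_{\beta,e}=(\gamma^*\otimes\sbigotimes_\ell\beta_\ell^*)\circ\nabla^\varepsilon\circ j_{1_U,e}$. This can be read off from the construction in Proposition~\ref{CobarAlgsurCobar}: both sides send $N(U)[-e]$ through $\theta^\varepsilon_N$ into the summand indexed by $(\gamma,f)$ and $(\beta_\ell,g_\ell)$. Combining these with the hypothesis for the pruning $\varepsilon : V\to U$ and the identity summand $e$, and with naturality of $\varphi$ on each $T_\ell$, gives commutativity for the summand $(\beta,e)$.

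I expect the main obstacle to be the compatibility of $\nabla$ with $\varepsilon : \coBar(s^{-1}\Bar(sX))\to X$. The hypothesis concerns $\nabla^\varepsilon_{1,e}$ as a map into $X(V)$, but the identity summand of $\coBar(s^{-1}\Bar sX)(S)$ indexed by $\gamma$ maps under the counit to $\gamma^*$ of a $\Bar(sX)(V)$-evaluation at $1_V$. I must check that this agrees with $\gamma^*$ applied to $\nabla^{\varepsilon}_{1,e}$, using the explicit formula $\varepsilon_S(\beta,\omega\otimes e)=(-1)^e\beta^*(\omega_{1}(e))$ for the counit of Theorem~\ref{OperadicTheorem}. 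The signs $(-1)^e$, $(-1)^f$ and the suspension shifts are where errors could creep in. I will track them by writing $e=(root,f,(g_\ell))$ as in the Remark after Lemma~\ref{L.Decomp}, and I expect them to cancel, just as in the corresponding operadic computation in Appendix~\ref{AppendixSigns}.
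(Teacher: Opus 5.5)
Your proposal is correct and is essentially the paper's argument: you reduce to the identity summands via $j_{\beta,e}=\beta^*\circ j_{1_U,e}$ and naturality of $\varphi$, and then use naturality of $\theta$ and of $\nabla$. The paper packages this as a cube whose front face sits at the composite $\beta\alpha$, whereas you factor $\beta\alpha=\varepsilon\gamma$ explicitly and apply the hypothesis at the pruning $\varepsilon$. The sign bookkeeping you expect to be the main obstacle is not needed: the counit $\coBar(s^{-1}\Bar(sX))\to X$ is a map of presheaves on $\AA$, and $\gamma^*$ sends the summand $(1_V,f)$ to the summand $(\gamma,f)$, so the required compatibility is just $\varepsilon_S\circ\gamma^*=\gamma^*\circ\varepsilon_V$, with the same factor $(-1)^f$ on both sides.
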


\begin{proof}

 Let $\varphi :\coBar N \to M$ be such a map of presheaves. Then $\varphi$ is a map of $X$-prealgebras if and only if for each pruning $\alpha : S \to T$ and each $\beta:T\to U$ and each enumeration $e\in en^+_T$, the diagram
 \begin{equation*}\tag{$\square$}
\xymatrix@M=4pt@R=16pt@C=40pt{
N(U)[-e]\ar@{->}[rr]^{\nabla^\alpha_{\beta,e} \quad \ \quad \ } \ar[d]^{\varphi_{T, \beta, e}} & & 
 X(S) \otimes \sbigotimes_\ell N(U_\ell) [-g_\ell] \ar[d]^{1 \otimes \sbigotimes \varphi_{T_\ell, \beta_\ell, g_\ell}} \\
    M(T) \ar[rr]^{\theta^\alpha \quad \ \quad \ }   & &
  X(S) \otimes \sbigotimes_\ell M(T_\ell)
 }
\end{equation*} 
 commutes, where $\beta_\ell : T_\ell \to U_\ell$ and $g_\ell \in en^+_{U_\ell}$ as before, and where 
$\varphi_{T,\beta,e}=\varphi_T \circ j_{\beta,e}$ and similarly for $\varphi_{T_\ell, \beta_\ell, g_\ell}$.

The lemma asserts that it is enough to require this for each $\alpha: S\to T$ and each $e$ where $\beta$ is the identity.
This follows from the naturality of $\varphi$ together with the fact that for an arbitrary $\beta : T \to U$, the presheaf structure $\beta^* : \coBar N(U) \to \coBar N (T)$ sends the summand for the identity to that for $\beta$, as in 
 \begin{equation*}
\xymatrix@M=4pt@R=16pt@C=40pt{
\coBar(N)(U)\ar[rr]^{\beta^*} & & \coBar(N)(T) \\
& N(U)[-e] \ar[ul]^{j_{1,e}}\ar[ur]_{j_{\beta,e}} &
 }
\end{equation*} 
Thus, by naturality of $\varphi$, we find for $\varphi_{T, \beta, e}: N(U)[-e] \to M(T)$ that 
$$\varphi_{T, \beta, e}= \varphi_{T} \circ j_{\beta,e} = \varphi_T \circ \beta^* \circ j_{1,e} = \beta^* \circ \varphi_U \circ j_{1,e} = \beta^* \circ \varphi_{U,1,e}.$$
In other words, the left hand square in the cube below commutes, and for the same reason so does the right hand  square.
(In the cube, we delete the shifts in degree and write $\varphi_{T,\beta}$ instead of $\varphi_{T,\beta,e}$ to save some space.)
 \[
  \xymatrix{
& N(U) \ar[rr]^{\nabla^\alpha_{\beta,e}} \ar'[d][dd]^{\varphi_{T,\beta}}&& \makebox[\widthof{$B$}][l]{$X(S)\otimes N(U/S)$} \ar@{->}[dd] \\
N(U) \ar@{=}[ru] \ar[rr]^{\nabla^{\alpha \beta}_{1,e}} \ar[dd]_{\varphi_{U,1}} && X(S)\otimes N(U/S) \ar@{=}[ru] \ar[dd]^(.25){1\otimes \sbigotimes \varphi_{U_{\ell,1}}} & \\
& M(T) \ar'[r][rr]^{\theta^\alpha} && \makebox[\widthof{$B$}][l]{$X(S)\otimes M(T/S)$} \\ M(U) \ar@{>->}[ru]^{\beta^*} \ar[rr]^{\theta^{\alpha \beta}} && X(S)\otimes M(U/S) \ar@{->}[ru] &
}
\]
The front square is a square of the form ($\square$) for $\beta$ the identity, the bottom square commutes by naturality of the operadic decomposition map, and the commutativity of the back square follows from that for the front one. This proves the lemma.
\end{proof}

Now let us turn to the dual case and consider for precoalgebras $M$ over $X$ and $N$ over $\Bar(sX)$ as before a map of copresheaves $\psi : N \to \Bar M$.
We wish to examine when $\psi$ is a morphism of precoalgebras.
Recall first that for a tree $S$, the value of $\Bar(sX)$ at $S$ is given by
$$\Bar(sX)(S)=\Big( \prod_{\gamma: S\to V,f} sX(V)[f]\Big)^{inv}.$$
where $\gamma$ ranges over morphisms in $\AA$ and $f \in en_V$. Write
$$p_{\gamma,f} : \Bar(sX)(S)\to sX(V)[f] $$
for the projection. Similarly,
$$\Bar(M)(T)= \Big(\prodc_{\beta:T\to U,e} M(U)[e]\Big)^{inv}$$
and we will write 
$$q_{\beta,e} : \Bar(M)(T) \to M(U)[e]$$
for the projection.
The structure of $\Bar M$ as a $\Bar(sX)$-precoalgebra is given by maps
$$\nabla^\alpha : \Bar M(T) \to \Bar(sX)(S) \otimes \Bar(M)(T/S),$$
one for each pruning $\alpha: S\to T$. Let us write
$$\nabla^\alpha_{\beta,e}=(p\otimes q)\circ \nabla^{\alpha} : 
\Bar M(T) \to sX(V)[f] \otimes \sbigotimes_\ell M(U_\ell / T_\ell)[g_\ell]
$$
where $p=p_{\gamma,f}$ and $q=\otimes_\ell q_{\beta_\ell, g_\ell}$ for $\gamma, \beta_\ell$ and $f,g_\ell$ as in Lemma~\ref{L.Decomp} and the remark following it. The map $\psi : N \to \Bar M$ of copresheaves then gives the following commutative diagram, for any enumeration $e \in en_T^+$ and resulting enumerations $f\in en_S$ and $g_\ell \in en^+_{T_\ell}$ as before:
 \begin{equation*}
\xymatrix@M=4pt@R=16pt@C=50pt{
N(T)\ar@{->}[r]^{\tau^\alpha \quad \ \quad} \ar[d]_{\psi_T} &  
 \Bar(sX)(S) \otimes N(T/S) \ar[r]^{p_{1,e} \otimes 1} \ar[d]^{1\otimes \psi_{T/S}} &
 sX(S)[f] \otimes \sbigotimes_\ell N(T_\ell)  \ar[d]^{1 \otimes \sbigotimes \psi_{T_\ell, \beta_\ell, g_\ell}} \\
   \Bar M(T)[e]\ar[d]_{q_{1,e}} \ar[r]^{\nabla^\alpha \quad \ \quad  }   &
    \Bar(sX)(S) \otimes \Bar M(T/S) \ar[r]^{p_{1,e} \otimes 1} \ar[d]^{p \otimes q}  &
  sX(S)[f] \otimes \sbigotimes_\ell \Bar M(T_\ell) \ar[dl]^{1 \otimes \sbigotimes q_{1, g_\ell}}  \\
  M(T)[e]  \ar[r]^{\theta^\alpha \quad \ \quad  }& 
  sX(S)[f] \otimes \sbigotimes_\ell M(T_\ell)[g_\ell]  &
 }
\end{equation*} 
%
%
where $p \otimes q$ in the middle stands for $p_{1,f} \otimes \sbigotimes_\ell q_{1,g_\ell}$.

Let us write $\psi_{T, \beta,e} : N(T) \to M(U)[e]$ for $q_{\beta,e} \circ \psi_T$, where $\beta : T \to U$ and $e \in en^+_U$. Then exactly as for the previous lemma, the naturality of $\psi$ now implies the following.

\begin{lemma}
 With the notation as above, a map $\psi : N \to \Bar M$ of copresheaves over $\RR$ is a morphism of $\Bar(sX)$-algebras if and only if for each pruning $\alpha : S \to T$, the diagram 
 \begin{equation*}
\xymatrix@M=4pt@R=16pt@C=40pt{
N(T)\ar@{->}[rr]^{\nabla^\alpha_{1,e} \quad \ \quad \ } \ar[d]^{\psi_{T, 1, g}} & & 
 sX(S)[f] \otimes \sbigotimes_\ell N(T_\ell) \ar[d]^{1 \otimes \sbigotimes \psi_{T_\ell, 1, g_\ell}} \\
    M(T)[e] \ar[rr]^{\theta^\alpha \quad \ \quad \ }   & &
  sX(S)[f] \otimes \sbigotimes_\ell M(T_\ell)[g_\ell]
 }
\end{equation*} 
commutes.
\end{lemma}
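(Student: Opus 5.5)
The plan is to dualize the proof of the preceding lemma on $\varphi :\coBar N \to M$. The cobar complex is a coproduct; $\Bar M(T)$ is an invariant (finitely supported) product, detected by the projections $q_{\beta,e}$. By definition, $\psi$ is a morphism of $\Bar(sX)$-precoalgebras if and only if, for every pruning $\alpha: S\to T$,
$$\nabla^\alpha\circ\psi_T=(1\otimes\psi_{T/S})\circ\tau^\alpha .$$
Since both sides land in an invariant product, it suffices to check this equation after composing with every projection $p_{\gamma,f}\otimes\sbigotimes_\ell q_{\beta_\ell,g_\ell}$. By Lemma~\ref{L.Decomp}, these projections are indexed by two-fold extensions $S\stackrel{\alpha}{\to}T\stackrel{\beta}{\to}U$ together with enumerations $e\in en^+_U$, written as $(root,f,(g_\ell))$ as in the remark following that lemma.

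So the condition is a family of squares of the form ($\square$), one for each pair $(\beta,e)$. Unwinding the definition of $\nabla^\alpha$ on $\Bar M$ (Proposition~\ref{BarCoalgSurBar}, or Remark~\ref{Rem64} in coinvariant form), the $(\beta,e)$-component of $\nabla^\alpha\circ\psi_T$ is, up to the sign $(-1)^\zeta$ built into $\nabla'$, the map $\theta^\varepsilon\circ\psi_{T,\beta,e}$. Here $\varepsilon: V\to U$ is the pruning produced by factoring $\beta\alpha$. The $(\beta,e)$-component of $(1\otimes\psi_{T/S})\tau^\alpha$ is
$$(p_{\gamma,f}\otimes\sbigotimes_\ell \psi_{T_\ell,\beta_\ell,g_\ell})\circ\tau^\alpha .$$
The square for $\beta=1$ is precisely the square in the statement, so the condition in the statement is necessary.

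For sufficiency, I reduce a general $\beta$ to the identity using naturality of $\psi$. The copresheaf structure of $\Bar M$ satisfies $q_{\beta,e}=q_{1,e}\circ\beta_*$, so
$$\psi_{T,\beta,e}=q_{1,e}\circ\beta_*\circ\psi_T=q_{1,e}\circ\psi_U\circ\beta_*=\psi_{U,1,e}\circ\beta_* .$$
The same identity holds for the $\beta_\ell$ and the $T_\ell$. I then assemble a cube analogous to the one in the previous proof:
\begin{itemize}
\item the front face is the given square for the pruning $\varepsilon$ (or for $\beta\alpha$ after factoring as in Lemma~\ref{L.Decomp}) with identity extension;
\item the left and right faces are the naturality identities just derived;
\item the top face is the compatibility of the precoalgebra structure $\tau$ of $N$ with $\beta_*$, together with the compatibility $\tau^{\beta\alpha}$ versus $\tau^{\varepsilon}$ followed by $\gamma_*$ on the $Y$-factor, which holds because $\tau$ of $N$ is natural and $p_{\gamma,f}=p_{1,f}\circ\gamma_*$ in $\Bar(sX)$.
\end{itemize}
The back face, which is the $(\beta,e)$-component, then commutes.

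The main obstacle is this top face. For coalgebras, the structure maps are given only for prunings and are not determined by inner maps (as remarked in the definition of precoalgebras). I therefore need to verify carefully that the naturality conditions imposed on $\nabla$ for precoalgebras relate $\tau^\alpha$ followed by $\beta_*$ to $\tau^\varepsilon$ precomposed with the inner map $\gamma$, combined with the copresheaf action of $\Bar(sX)$. I would make this explicit first, using the decomposition $T\cong S\circ(T_\ell)$ and $U\cong V\circ(U_\ell)$. I also need to check that the sign $(-1)^\zeta$ and the enumeration reorderings agree on both sides. This follows because the signs depend only on degrees and on the position of the root edge, both of which are preserved under $\beta_*$.
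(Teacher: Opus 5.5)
Your proposal is correct and follows the paper's route. The paper proves this lemma ``exactly as'' the preceding one: it reduces an arbitrary extension $\beta$ to the identity by naturality of $\psi$ (your identity $\psi_{T,\beta,e}=\psi_{U,1,e}\circ\beta_*$), using a cube whose front face is the hypothesis for the pruning $\varepsilon$. The top face you flag as the main obstacle is simply the naturality axiom for precoalgebras, $\tau^\varepsilon\circ\beta_*=(\gamma_*\otimes\sbigotimes_\ell(\beta_\ell)_*)\circ\tau^\alpha$, combined with $p_{1,f}\circ\gamma_*=p_{\gamma,f}$, so no further verification is needed. Note that $\tau^{\beta\alpha}$ itself is not defined when $\beta\alpha$ is not a pruning.
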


As said, this lemma is proved in exactly the same way as the previous one, and we omit the details. The main result of this section now readily follows:

\begin{theorem}\label{ThAdjAlgCoalg}
 (i) Let $X$ be a preoperad. The the bar-cobar adjunction for presheaves over $\RR$
 lifts to an adjunction between prealgebras over $X$ and precoalgebras over $\Bar(sX)$,
 $$\varepsilon_! \coBar : Precoalg( \Bar sX) \rightleftarrows Prealg(X) : \Bar$$
 where $\varepsilon : \coBar (s^{-1} \Bar(sX)) \to X$ is the counit of the adjunction.
 
 (ii) The unit and counit of the adjunction are quasi-isomorphisms.
 
 (iii) If in addition $X$ is an $\infty$-operad, the adjunction restricts to an adjunction between $\infty$-algebras over $X$ and $\infty$-coalgebras over $\Bar(sX)$.
\end{theorem}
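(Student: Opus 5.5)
The plan is to assemble the theorem from results already established: the adjunction of Proposition~\ref{adjonctionAlg}, the duality Theorem~\ref{Th:dualite}, the two structure results Propositions~\ref{BarCoalgSurBar} and~\ref{CobarAlgsurCobar}, and the two lemmas just proved. For part (i), we first use Proposition~\ref{CobarAlgsurCobar} with $Y=\Bar(sX)$. It makes $\coBar N$ a $\coBar(s^{-1}\Bar(sX))$-prealgebra, so $\varepsilon_!\coBar N$ is an $X$-prealgebra by Section~\ref{pushforward}. Dually, Proposition~\ref{BarCoalgSurBar} makes $\Bar M$ a $\Bar(sX)$-precoalgebra. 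Both constructions are functorial.

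The core of the argument is to check that the bijection $\varphi\leftrightarrow\psi$ of Proposition~\ref{adjonctionAlg} restricts to structure-preserving maps. By the two preceding lemmas, $\varphi$ is a map of $X$-prealgebras iff the square with $\nabla^\alpha_{1,e}$, $\theta^\alpha$ and the components $\varphi_{T,1,e}$ commutes for every pruning $\alpha$. Likewise $\psi$ is a map of precoalgebras iff the analogous square with components $\psi_{T,1,e}$ commutes.

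The adjunction formulas give
$$\psi_T(y)_{1_T}(e)=(-1)^e\varphi_T(1_T,y\otimes e),$$
so $q_{1,e}\circ\psi_T=\pm\,\varphi_{T,1,e}$. Hence the two squares are the same square up to sign. There is one point to verify here. In the structure map $\nabla^\alpha_{1,e}$ for $\varepsilon_!\coBar N$, the $X(S)$-component is obtained by applying the counit $\varepsilon:\coBar(s^{-1}\Bar(sX))\to X$. This evaluates an element of $\Bar(sX)(S)$ at $1_S$ and the enumeration $f$, which is precisely the projection $p_{1,f}$ used on the bar side. So the two conditions coincide once the signs $(-1)^e=(-1)^{1+f+\Sigma g_\ell}$ are matched against the Koszul signs in $\overline\nabla^\alpha$ from Remark~\ref{Rem64} and in the suspension. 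This sign bookkeeping is the main obstacle. I would carry it out in the coinvariant description, exactly as in Appendix~\ref{AppendixSigns} for the operadic case, where the extra root edge accounts for the shift by one.

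For part (ii), the unit and counit of the lifted adjunction have the same underlying maps of (co)presheaves as those of Proposition~\ref{adjonctionAlg}. The one exception is the pushforward $\varepsilon_!$, but this changes only the structure maps, not the underlying presheaf. Quasi-isomorphisms are detected on underlying (co)presheaves, so Theorem~\ref{Th:dualite} gives (ii) directly.

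For part (iii), Proposition~\ref{BarCoalgSurBar}(ii) shows that $\Bar$ sends $\infty$-algebras to $\infty$-coalgebras. For the left adjoint, Proposition~\ref{CobarAlgsurCobar}(ii) shows that $\coBar N$ is an $\infty$-algebra over $\coBar(s^{-1}\Bar(sX))$. The counit $\varepsilon$ is a quasi-isomorphism by Theorem~\ref{OperadicTheorem}, and Section~\ref{pushforward} says that $\varepsilon_!$ preserves $\infty$-algebras when $\varepsilon$ is a quasi-isomorphism. Hence $\varepsilon_!\coBar N$ is an $\infty$-algebra over $X$. Since $\infty Alg(X)$ and $\infty Coalg(\Bar sX)$ are full subcategories, the adjunction restricts to them.
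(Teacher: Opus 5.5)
Your proposal is correct and follows essentially the same route as the paper. You lift $\Bar$ and $\varepsilon_!\coBar$ via Propositions~\ref{BarCoalgSurBar} and~\ref{CobarAlgsurCobar}, match the two component-wise lemmas through the adjunction formulas for (i), read off (ii) from Theorem~\ref{Th:dualite} on underlying (co)presheaves, and get (iii) from the fact that the counit $\varepsilon$ is a quasi-isomorphism together with the pushforward. Your identification of $\varepsilon$ on the relevant summand with the projection $p_{1,f}$, and the sign bookkeeping you flag, simply make explicit what the paper compresses into ``comparing the previous two lemmas''.
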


\begin{proof}
 For a prealgebra $M$ over $X$, the precoalgebra structure on $\Bar M$ established in Proposition~\ref{BarCoalgSurBar} is clearly functorial in $M$, so the bar construction lifts to a functor $Prealg(X) \to Precoalg(\Bar sX)$. Similarly, for a precooperad $Y$, Proposition~\ref{CobarAlgsurCobar} gives that the functor $\coBar$ lifts to a functor $Precoalg(Y) \to Prealg(\coBar s^{-1}Y)$. Thus we obtain a pair of functors
 $$\varepsilon_! \coBar : Precoalg( \Bar sX) \rightleftarrows Prealg(X) : \Bar$$
 as stated in the theorem.
 If one now compares the previous two lemmas, one sees that for a prealgebra $M$ over $X$ and a precoalgebra $N$ over $\Bar(sX)$, a map of copresheaves $\varphi : N \to \Bar M$ is a map of precoalgebras if and only if its transpose $\psi : \coBar N \to M$ along the adjunction of Theorem~\ref{adjonctionAlg} is a map of prealgebras. This proves Part (i). Part (ii) follows from Part (i) and Theorem~\ref{adjonctionAlg}. Finally, for Part (iii), first notice that since the counit $\varepsilon$ is a quasi-isomorphism (see Theorem~\ref{Th:dualite}), the functor $\varepsilon_!$ sends  $\infty$-algebras to $\infty$-algebras (see Section~\ref{pushforward}). Part (iii) now formally follows from Proposition~\ref{BarCoalgSurBar} and Proposition~\ref{CobarAlgsurCobar}.
\end{proof}

%
%

\section {The homology of the bar complex}

%

In our earlier paper~\cite{HM21}, we defined the \emph{dendroidal homology} $DH_*(X)$ of an $\infty$-operad $X$ as the homology of the total complex of the double complex 
\begin{equation}\label{Eq8.1}
DC_p(X)_q= \sbigoplus_\ell \bBar(X_q)(C_\ell)_p 
\end{equation}
where $C_\ell$ is the corolla with $\ell \geq 2$ leaves. 
This homology splits as a direct sum
\begin{equation}\label{Eq8.2}
DH_*(X)=\sbigoplus DH_*^{(\ell)}(X) 
\end{equation}
and the $DH_*^{(\ell)}(X)$ form a (strict) cooperad in vector spaces after suspension (i.e. the $DH_*^{(\ell)}(sX)$ do).
Indeed, $\bBar(sX)$ is an $\infty$-cooperad by  Theorem~\ref{OperadicTheorem}, so the 
$\bBar(sX)(C_\ell)$ together form a strict cooperad in chain complexes, cf.~\ref{2.7}.

We can define in an analogous way the \emph{dendroidal homology} $DH_*(M)$ of an $\infty$-algebra $M$ over an $\infty$-operad $X$ as the homology of the bar complex evaluated at the tree $\eta$,
\begin{equation}\label{Eq8.3}
DH_*(M)=H_*(\bBar(M)(\eta)). 
\end{equation}
Then $DH_*(M)$ is a coalgebra over the (strict) cooperad $DH_*(sX)$, since $\bBar(M)$ is an $\infty$-coalgebra over $\bBar(sX)$ by Theorem~\ref{BarCoalgSurBar}, whence $\bBar(M)(\eta)$ is a strict coalgebra over the cooperad given by the $\bBar(sX)(C_\ell)$'s, cf. Remark~\ref{Strictification}.

This dendroidal homology as in (\ref{Eq8.3}) of course satisfies the usual standard properties, such as invariance under quasi-isomorphism, long exact sequences and spectral sequences associated to the double complex coming from the grading in $M$ and the one by edges in a tree.

If $P$ is a classical Koszul operad in vector spaces (viewed as chain complexes concentrated in degree $0$) then $DH_*(NP)$ as defined in (\ref{Eq8.1}) agrees with the homology of an operad as defined in~\cite{GK}, and the cooperad $\{DH_*^{(\ell)}(NP)\}_\ell$ is the Koszul dual $P^\acc$ of $P$. The vector space $DH_0(NP)$ is the space of indecomposables of $P$. Similarly, if $A$ is a $P$-algebra (again concentrated in degree $0$) then $DH_*(N(P,A))$ as just defined agrees with the homology $H_*^P(A)$ of $A$ as defined in~\cite{LV}, and $DH_0 N(P,A)$ is the space of indecomposables of $A$, i.e. the cokernel of the map $$\sbigoplus_\ell P(\ell) \otimes A^{\otimes \ell} \to A.$$

%
We will now show that this homology $H_*\Bar(M)$ can be interpreted in terms of the homology of categories, as stated in Theorem~\ref{Prop:HomologyOfCat}.
More generally we will describe the value of the homology of $\Bar(M)$ at each tree.

Recall that for a small category $\CC$ and a presheaf $A: \CC^{op} \to \Ch$ with values in chain complexes, the homology $H_*(\CC,A)$ is that of the total complex of the double complex $C_*(\CC,A)$ defined by 
$$C_p(\CC,A)_q = \bigoplus_{c_0 \to \ldots \to c_p} A(c_p)_q$$
with differential in $p$ induced by the face maps $d_i:(c_0 \to \ldots \to c_p) \mapsto (c_0 \to \ldots \hat{c_i} \to \ldots \to c_p)$ of the nerve of $\CC$ and the action of $\CC$ on $A$ for $d_p$. If $\CC' \subset \CC$ is a subcategory, then if we also write $A$ for the restriction to $\CC'$, we obtain a double complex $C_*(\CC', \CC ; A)_*$ as the usual cokernel
$$0 \to C_*(\CC', A)_* \to C_*(\CC , A)_* \to C_*(\CC', \CC ; A)_* \to 0.$$
The total complex of this cokernel computes the homology $H_*(\CC', \CC ; A)_*$ of a pair of categories, the category $\CC$ and a subcategory $\CC'$.


Recall also that there is an isomorphism of graded abelian groups
$$DH_*^{(\ell)}(X)=H_*(C_\ell / \AA, C_\ell /\!/ \AA ; X)$$
(see~\cite{HM21}, but notice that our grading of $\bBar(X)$ differs by $1$ from the one in~\cite{HM21}).
Notice that there is no a priori natural (shifted) cooperad structure on the right hand side of Equation~(\ref{Eq8.2}).

Now let us return to the category $\RR$ of trees defined in Section~\ref{S:Categories}. For an object $S$ of $\RR$, the comma (or ``slice'') category of objects under $S$ is denoted $S/\RR$. Its full subcategory on the non-isomorphisms $S \to T$ is denoted $S/\!/\RR$. We then have the following property of the bar construction.

\begin{theorem}\label{Prop:HomologyOfCat}
 For any presheaf $M$ on $\RR$ with values in chain complexes, and for any object $S$ of $\RR$, there is an isomorphism
 $$H_*(\Bar(M)(S))=H_{*-p}(S/\RR, S/\!/\RR ; M),$$
 where $p$ is the number of vertices of the tree $S$.
\end{theorem}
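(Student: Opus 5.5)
Since every morphism of $\RR$ is a composite of elementary faces and isomorphisms, I will compare $\bBar(M)(S)$ with a normalized chain complex of the pair of categories $(S/\RR, S/\!/\RR)$. The homotopy used in the first lemma of the proof of Theorem~\ref{Th:dualite} suggests the route. There, factorisations $S \mono U \mono T$ of a face $S\to T$ were shown to form a cubical complex. Concretely, the faces of $T$ containing the image of $S$ correspond to subsets of the root and inner edges of $T$ lying outside $S$: contract the inner ones, and chop off the top vertices above leaves. So the category $S/\RR$ is, up to isomorphisms, a poset-like category in which the interval under each object $\alpha:S\to T$ is a cube of dimension $q-p$, where $q$ counts the root and inner edges of $T$.

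The first step is to reduce the nerve complex $C_*(S/\RR, S/\!/\RR;M)$ to a smaller complex. Every object of $S/\RR$ has only automorphism groups acting freely on chains, and we work over a field. I will therefore replace the nerve by the complex of coinvariants under isomorphisms, with chains of faces $\alpha=\alpha_0 \to \alpha_1\to\dots\to\alpha_k$ and coefficient $M(T_k)$. This is harmless: a standard argument (isomorphisms can be skeletally removed, and finite groups have invertible order in characteristic zero; in general one uses that they act freely) shows that the categorical homology is computed by the skeletal, iso-free simplicial complex. Next, by the cubical structure of intervals, the chains ending at a fixed $\alpha_k:S\to T$ form the barycentric subdivision of the cube of faces of $T$ containing $S$. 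Hence the contribution of $M(T)$ is computed by a cubical complex. Relative to $S/\!/\RR$, only chains starting at the initial object $1_S$ survive. Chains from $1_S$ to $T$ in the subdivided cube, modulo those avoiding $1_S$, have homology of the cube relative to its boundary: a single class in degree $q-p$, represented by an orientation of the cube, that is, by an enumeration $e\in en^+_T$ up to sign, the edges of $S$ being fixed.

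The key step is to assemble this into a filtration argument. I filter both $C_*(S/\RR, S/\!/\RR;M)$ and $\bBar(M)(S)$ by the size of the final tree $T$. The associated graded of the first is $\bigoplus_{[\alpha]} M(T)\otimes \widetilde C_*(\text{cube},\partial)$, whose homology is $M(T)$ shifted by $q-p$ and twisted by orientations. The associated graded of the second is $\bigoplus_{[\alpha]} M(T)[e]$ with coinvariants, which carries the same twist. Rather than only compare spectral sequences, I would write an explicit chain map. It sends $(\alpha,e,x)$ to the signed sum, over all maximal flags of faces from $S$ to $T$ ordered according to $e$, of the corresponding nerve chain with coefficient $x$. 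I would then check that it intertwines $\bar\partial$ with the nerve differential: the $e_i^*$ and $v_i^*$ terms correspond to the last face $d_k$, and the internal faces cancel in the cube sums. Finally, the map is a quasi-isomorphism on associated graded pieces, and the filtration is exhaustive and bounded below, so it is a quasi-isomorphism. The shift by $p$ comes from normalizing so that $1_S$ sits in nerve degree $0$, while $\bBar$ counts the $p$ edges of $S$ (root and inner) in $en^+$.

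The main obstacle is the sign bookkeeping in the comparison map. The inner faces $e_i^*$ and the prunings $v_i^*$ carry opposite signs in $\bar\partial$, and both must match the single nerve face $d_k$ acting through $M$. I would first try to absorb the relative sign into the orientation convention for the cube, treating a pruning edge as oppositely oriented, as in the homotopy $h_a$ where $\partial h_a S = -S - h_a\partial S$ in the external case. I would also verify on corollas that the degree shift is $p$ rather than $p-1$.
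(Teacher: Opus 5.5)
Your comparison map can be made to work, but the step that makes it a quasi-isomorphism rests on a false claim.

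\textbf{The chain map is sound.} Every rank-two interval of faces has exactly two middle elements, including the mixed case $\partial_{v_j}\partial_{e_i}=\partial_{v_j}\partial_{v_i}$. So if you sign a maximal flag by the sign of its removal order relative to $e$ times $(-1)$ raised to the number of prunings, the interior faces cancel. The last face $d_k$ then reproduces $\bar\partial$, up to a global sign normalisation.

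\textbf{The gap.} The quasi-isomorphism on associated graded pieces depends on the claim that the faces of $T$ containing $S$ form a cube indexed by subsets of the edges outside $S$. That claim is false. Take $S=\eta$, and let $T$ have a binary root vertex $v$ whose two inputs $a,b$ are inner edges carrying binary top vertices $w_1,w_2$, so $q=3$. The faces of $T$ in $\RR$ are:
\begin{itemize}
\item $\eta$;
\item four corollas $\partial_{w_1}\partial_{w_2}T$, $\partial_a\partial_{w_2}T$, $\partial_b\partial_{w_1}T$, $\partial_a\partial_bT$, all with the same set of root and inner edges, namely just the root;
\item four two-vertex trees $\partial_{w_1}T,\partial_{w_2}T,\partial_aT,\partial_bT$;
\item $T$ itself.
\end{itemize}
That is $1,4,4,1$ faces by rank instead of the $1,3,3,1$ of a cube. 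The proper part is a $4$-cycle, so this interval is the face lattice of a square with an empty face adjoined. The reason is that an edge outside $S$ can be removed either by contracting it or by chopping the vertex above it once that vertex has become top, and both options coexist. As written, nothing in your argument computes the homology of the complex of flags $S=U_0<\cdots<U_k=T$ with interior face maps, which is exactly the associated graded piece you need.

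\textbf{What is needed, and how the paper avoids it.} You need that for every non-invertible face $S\to T$ this flag complex has homology $k$, concentrated in degree $q-p$ and spanned by your flag sum. Equivalently, the order complex of the open interval $(S,T)$ must be a homology $(q-p-2)$-sphere. This is true, but it requires real input. One route is the contracting homotopy $h_a$ from the paper's lemmas, which shows that the cellular complex of faces between $S$ and $T$ (with differential $\sum\pm\hat\partial_i$) is acyclic, combined with an induction on $q-p$ comparing that cellular complex with the order complex of the interval.

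The paper sidesteps interval topology entirely. It forms the double complex $\big(\bigoplus_{S\to T\to R_0\to\cdots\to R_q}k[en^+_T]\otimes M(R_q)\big)_{coinv}$. This collapses one way to $\bBar(M)(S)$, because each $T/\RR$ has an initial object. It collapses the other way to the relative chains of $(S/\RR,S/\!/\RR)$, by the lemma that $\bBar(\underline k\RR(-,R))(S)$ has homology $\underline k[Iso(S,R)]$ concentrated in degree $p$.

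\textbf{A smaller point.} Automorphisms do not act freely on chains. The swap of the leaves of $C_2$ fixes both the object $\eta\to C_2$ of $\eta/\RR$ and the chain from $1_\eta$ to it. So your reduction to an iso-free complex genuinely needs the group orders to be invertible. The paper tacitly assumes this too, when it identifies coinvariants with invariants via $\rho$.
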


The proof of the theorem uses the following lemma.

\begin{lemma}
 Let $R$ be an object of $\RR$, and consider the associated representable presheaf $M= \underline k \RR(-, R)$ (viewed as a presheaf of chain complexes concentrated in degree zero). Then for any object $S$ in $\RR$
 
 $$H_n(\Bar(M)(S))=
 \left\{ \begin{array}{ll}
 \underline k[Iso(S,R)] & \text{ if } n=p \\
 0 & \text{ otherwise}
 \end{array}\right.$$
 where $p$ is the number of vertices in $R$.
\end{lemma}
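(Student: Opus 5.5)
The plan is to compute $\Bar(M)(S)$ directly, using the description $\bBar(M)$ by coinvariants from Section~\ref{BarAlgCoinv} (via the isomorphism $\rho$). Then I will recognise the resulting complex as a direct sum of the factorisation complexes $\underline k[Fact(\gamma)]$ already shown to be contractible in the proof of Theorem~\ref{Th:dualite}. Since $M(T)=\underline k\,\RR(T,R)$ is concentrated in degree $0$, the internal differential $\partial_M$ vanishes. Hence $\bBar(M)(S)$ is spanned by classes of triples $(\alpha,e,\beta)$, with $\alpha:S\to T$ in $\RR$, $e\in en^+_T$ and $\beta:T\to R$, placed in degree equal to the length of $e$, i.e. the number of vertices of $T$.

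The first step is to normalise representatives. The coinvariants identify $(\alpha,e,\beta)$ with $(\delta\alpha,\delta e,\beta\delta^{-1})$ for every isomorphism $\delta:T\isomap T'$. So we may assume that $\beta:T\mono R$ is a face inclusion, with no isomorphism part, and that $e$ is the restriction of a fixed enumeration $f\in en^+_R$, up to the sign of a permutation. The only datum that survives is then the composite $\gamma=\beta\alpha:S\to R$ together with its factorisation $S\to T\mono R$ through a face of $R$.

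The second step is to transport the differential $\bar\partial$. The operators $e_i^*$ and $v_i^*$ send $\beta$ to $\beta\circ\partial_{e_i}$ and $\beta\circ\partial_{v_i}$ respectively. In face language, this replaces the face $T$ by its elementary inner face $\partial_{e_i}T$, or by its pruning $\partial_{v_i}T$, and only those faces still containing the image of $S$ are allowed, exactly as in the definition of $e_i^*(\alpha,e,x)$ and $v_i^*(\alpha,e,x)$. This leaves $\gamma$ unchanged. Consequently $\bBar(M)(S)$ splits as a direct sum, over isomorphism classes of $\gamma:S\to R$ in $\RR$ (equivalently, over $\RR(S,R)$ modulo automorphisms of $R$, which are already absorbed into the coinvariants), of complexes that coincide up to sign with $\underline k[Fact(\gamma)]$ from the first lemma in the proof of Theorem~\ref{Th:dualite}. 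The signs are $(-1)^{e+x}(-1)^{i-1}$, with the extra minus sign for $v_i^*$. After multiplying away the global factor, these are exactly the signs $(-1)^{i-1}$ and $(-1)^i$ used there. Only finitely many $\gamma$ occur, so finite support is not an issue.

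The third step is to invoke that lemma. When $\gamma$ is not an isomorphism, the homotopy $h_a$ (adjoining an inner edge $a$ of $R$ not in $S$, or a leaf of $S$ that is inner in $R$) contracts $\underline k[Fact(\gamma)]$. When $\gamma$ is an isomorphism, the only face is $T=R$, so the summand is a copy of $k$ concentrated in degree equal to the number $p$ of vertices of $R$. Summing over these contributions gives $\underline k[Iso(S,R)]$ in degree $p$ and zero elsewhere.

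The main obstacle I expect is the bookkeeping in the second step. One must check that the coinvariant identifications match precisely the indexing set $Fact(\gamma)$, so that distinct $\gamma$ are not identified and each isomorphism $S\to R$ contributes exactly one copy of $k$. One must also check that the signs of $\bar\partial$ agree with those in the earlier lemma once the enumeration is induced from a fixed $f$. I would handle this by fixing $f$ once and for all, and choosing the representative with $e=f|_T$ and $\beta$ a face inclusion, precisely as in the proof of Theorem~\ref{Th:dualite}.
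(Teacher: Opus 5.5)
Your route is the paper's: split $\bBar(M)(S)$ by the composite $\gamma=\beta\alpha$, get one copy of $k$ in degree $p$ for each isomorphism $\gamma$, and contract the other summands with the edge-adjoining homotopy $h_a$. You import $h_a$ from the first lemma in the proof of Theorem~\ref{Th:dualite}; the paper redoes it.

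\textbf{The gap: the case $S=\eta$, $R\neq\eta$.} This is exactly the case that computes $DH_*(M)$, and your contraction step fails there. The inner part of $\gamma$ is $R'=\eta$, and the only leaf of $S$ is the \emph{root} of $R$. So neither admissible choice of $a$ exists: $R'$ has no inner edge, and no leaf of $S$ is inner in $R$. The justification given in that earlier lemma for the existence of such a leaf fails precisely when the source is $\eta$.

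Your looser phrase ``an inner edge $a$ of $R$ not in $S$'' does not rescue this. Faces $T$ of $R$ may then contain $a$ as an inner edge, contracted, as a leaf, or not at all. At $T=\eta$ no face is obtained by adjoining $a$ as an inner edge, since any face with $a$ inner has at least two vertices. Hence $h_a(\eta)=0=\partial\eta$, and $\partial h_a+h_a\partial$ vanishes on $\eta$ instead of being the identity. The paper treats this case separately only for $R=C_\ell$, by hand; for larger $R$ the same obstruction persists.

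\textbf{A fix that works for every $R$.} Filter $\underline k[Fact(\eta\to R)]$ by the number of vertices of the underlying pruning $P$ of $T$, i.e.\ the external part of the inner--external factorisation of $T\mono R$.
\begin{itemize}
\item Contracting an inner edge preserves $P$, and chopping a top vertex strictly shrinks it, so the filtration is by subcomplexes.
\item For prunings $P$ with at least two vertices, the associated graded piece is the augmented chain complex of the simplex on the nonempty set of inner edges of $P$. It is therefore contractible; this is where adjoining a fixed inner edge is a legitimate homotopy.
\item The bottom piece is $k\,C_u\to k\,\eta$, where $C_u$ is the root corolla of $R$. Its differential chops the vertex of $C_u$ and is an isomorphism.
\end{itemize}
Hence the whole complex is acyclic.

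\textbf{A smaller slip.} The summands are indexed by $\RR(S,R)$ itself, not by $\RR(S,R)$ modulo $\mathrm{Aut}(R)$. The coinvariants only involve isomorphisms of the middle tree $T$, which leave $\beta\alpha$ unchanged, whereas automorphisms of $R$ change it. With your parenthetical you would end up with $\underline k[Iso(S,R)/\mathrm{Aut}(R)]$ rather than $\underline k[Iso(S,R)]$. Your closing remark, that distinct $\gamma$ must not be identified, is the correct version.
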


\begin{proof}
 It will be convenient to work with the description of $\Bar(M)$ in terms of coinvariants. So for $M=\underline k \RR (-,R)$,
\begin{eqnarray*}
\bBar(M)(S)&=& \Big(\bigoplus_{\alpha:S \to T} k [en^+_T] \otimes \underline k[Hom_\RR(T,R)]\Big)_{coinv}\\
& \cong& \Big(\bigoplus_{S \to T \to R} k [en^+_T] \Big)_{coinv}.
 \end{eqnarray*} 
Elements of $\bBar(M)(S)$ are represented by $k$-linear combinations of triples of the form $(S \stackrel{\alpha}{\to} T \stackrel{\beta}{\to} R,e)$ where $e=(e_1, \ldots, e_p)$ is an enumeration of $T$ (i.e., of the inner edges and the root edge of $T$, as before).
The differential is given up to sign by
\begin{eqnarray*}
\partial(S \stackrel{\alpha}{\to} T \stackrel{\beta}{\to} R,e)= &
\sum (-1)^i (S \to \partial_{e_i}T \to R, (e_1, \ldots, \hat{e_i}, \ldots, e_p)) \\
& - \sum (-1)^j (S \to \partial_{v_j}T \to R,(e_1, \ldots, \hat{e_j}, \ldots, e_p)) 
\end{eqnarray*}
where the first sum runs only over inner edges of $T$ that do not belong to $S$, and the second one runs over the vertices $v_j$ immediately above $e_j$ that do not belong to $S$ and lie on top of $T$ (so that the face $\partial_{v_j} T \to T$ obtained by chopping off $v_j$ is well defined).
Note that in the latter case, although $v_j$ should not belong to $S$, the edge $e_j$ can be a leaf of $S$.

Let us observe first that the composition $\beta \circ \alpha : S \to R$ does not change under the differential. So the complex $\bBar(M)(S)$ decomposes as a sum over morphisms $S \stackrel{\gamma}{\to} R$, as
$$\bBar(M)(S)=\bigoplus_\gamma \bBar(M)(S)_\gamma$$
where $\bBar(M)(S)_\gamma$ is the complex of (coinvariance classes of) triples $(\alpha, \beta, e)$ where $\beta\circ \alpha=\gamma$.
If $\gamma$ is an isomorphism, this complex is concentrated in degree $p$ and can be taken to consist of triples  $(\alpha, \beta, e)$ where $\alpha$ and $\beta$ are the identity and $\gamma$ respectively, and $e$ is a fixed enumeration. So for an isomorphism $\gamma$,
 $$H_n(\Bar(M)(S)_\gamma)=
 \left\{ \begin{array}{ll}
 \underline k & \text{ if } n=p \\
 0 & \text{ otherwise}
 \end{array}\right.$$
 We claim that the summand $\bBar(M)(S)_\gamma$  is acyclic if $\gamma$ is not an isomorphism. This would show that 
 $$H_n(\Bar(M)(S))=\underline k[Iso(S,R)] \text{ if } n=p$$
 and zero for other $n$, and prove the proposition.

 To prove the claim, consider a non-isomorphism $\gamma : S \to R$ and factor it as an inner face map $S\to R'$ followed by an external face map $R' \to R$. Then at least one of these is not an isomorphism, and we consider these two cases separately.
 
 If $S\to R'$ is not an isomorphism, fix an inner edge $a$ in $R'$ that does not belong to $S$, and define
 $$h_a(S \stackrel{\alpha}{\to} T \stackrel{\beta}{\to} R,e)=
 \left\{ \begin{array}{ll}
 (-1) ^{q+1} (S \to T_a \to R, ea) & \text{ if } a \text{ is not an edge of }T \\
 0 & \text{ otherwise}
 \end{array}\right.$$
 Here $e=(e_1, \ldots, e_q)$ and $ea$ is the enumeration putting $a$ at the end. The tree $T_a$ is the face of $R$ described as follows.
 If we factor $\alpha$ and $\beta$ as an inner face followed by an external one, as in
 \begin{equation*}
\xymatrix@M=5pt@R=12pt@C=12pt
{
S \ar[rr] \ar[dr]_{in} && T\ar[dr]_{in}  \ar[rr] && R \\ 
& S'\ar[dr]_{in} \ar[ur]_{ex} && T'\ar[ur]_{ex} \\
 && R'\ar[ur]_{ex} 
}
\end{equation*}
(where $in$ denotes an inner face and $ex$ an external face), then $a$ is an inner edge of $R'$ (by definition), hence also of $T'$. The tree $T$ is obtained by contracting some inner edges of $T'$. Now $T_a$ is obtained by contracting those edges of $T'$ \emph{except} $a$.
One checks that
$$\partial h_a(S \to T \to R,e)= -h_a \partial (S \to T \to R,e) + (S \to T \to R,e)$$
in case $a$ does not belong to $T$. In case $a$ belongs to $T$, we have
$\partial h_a(S \to T \to R,e)=0$. On the other hand, if we write $e=e'a$ putting $a=e_q$ at the end of the sequence $e=(e_1, \ldots, e_q)$ (which we may do by coinvariance), then
$$ h_a \partial (S \to T \to R,e) = (-1)^q h_a ( S \to \partial_a T \to R,e') = (S \to T \to R, e'a)$$
so again $\partial h_a(S \to T \to R,e)= -h_a \partial (S \to T \to R,e) + (S \to T \to R,e)$.
This shows that $h_a$ is a contracting homotopy, ie. $h_a \partial + \partial h_a=Id$.

Now suppose that in the factorisation $S \to R' \to R$ the first map is an isomorphism, so $\gamma : S \to R$ is an external face. Then in any factorisation $S \stackrel{\alpha}{\to} T \stackrel{\beta}{\to} R$ of $\gamma$, the map $\alpha$ is external as well. The claim in the particular case $\eta \to \eta \to C_\ell$ can be directly checked by hand.
Otherwise fix a leaf $a$ of $S$ which is an inner edge of $R$.
Such a leaf $a$ must exist as $S\to R$ is not an isomorphism and $R$ has at least one inner edge.
For a factorisation $S \stackrel{\alpha}{\to} T \stackrel{\beta}{\to} R$ of $\gamma$ where $a$ is not an inner edge of $T$, the edge is a leaf of $T$ and we can define a tree $T_a$ by grafting the vertex of $R$ immediately above $a$ onto $T$. We then define
$$h_a(S \stackrel{\alpha}{\to} T \stackrel{\beta}{\to} R,e)=
 \left\{ \begin{array}{ll}
 (-1) ^{q+1} (S \to T_a \to R, ea) & \text{ if } a \text{ is not an inner edge of }T \\
 0 & \text{ otherwise}
 \end{array}\right.$$
 where $e=(e_1, \ldots, e_q)$ and we assume $e_q=a$ in case $a$ is inner in $T$, as before. Then one checks again that 
$$\partial h_a(S \to T \to R,e)= -h_a \partial (S \to T \to R,e) + (S \to T \to R,e),$$
so $h_a$ is a contracting homotopy.

This concludes the verification of the claim, and proves the lemma.
\end{proof}

\begin{proof}[Proof of the theorem]
 By the usual double complex argument, it  suffices to prove the proposition in case $M$ is just a presheaf with values in abelian groups, viewed as chain complexes concentrated in degree zero.

 Take a presheaf $M$ of abelian groups. Fix an object $S$ in $\RR$. Then we can form a double complex $C_{*,*}$ which in bidegree $p,q$ is defined as
 $$C_{p,q}=\Big( \bigoplus_{S \to T \to R_0 \to \ldots \to R_q} k[en^+_T] \otimes M(R_q) \Big)_{coinv}.$$
 Here $S \to T \to R_0 \to \ldots \to R_q$ is a sequence of morphisms in $\RR$, where $T$ has $p$ vertices. Coinvariants are for isomorphisms of $T$ as in 
\begin{equation*}
\xymatrix@M=6pt@R=8pt{
 & T \ar[dr] \ar@{-->}[dd]^\wr & \\
 S\ar[dr]\ar[ur] && R_0 \\
 &  T'\ar[ur] &}
\end{equation*}  
 and permutations of the enumerations of $T$. The differential in $q$ is like that for the homology of $\RR$, an alternating sum of operations deleting the $R_i$. The differential in $p$ is defined by an alternating sum of faces of $T$ as in the definition of $\bBar(M)$.
 
 Now for a fixed $p$, the complex $C_{p,-}$ is a sum over isomorphism classes of arrows $S\to T$ and enumerations $e$ of complexes computing the homology of $T/\RR$ with coefficients in $M$. Since $T/\RR$ has an initial object, this homology vanishes in positive degrees, and we find
 $$H_*(C_{p,-})= \bigoplus_{S\to T,e} M(T)_{coinv}= \bBar(M)(S)_p.$$
 We conclude that the double complex computes the homology $H_*(\bBar(M)(S))$.
 
 On the other hand, for a fixed $q$, and before taking coinvariants, $C_{-,q}$ is a sum over sequences $R_0 \to \ldots \to R_q$ of complexes $\bBar(\RR(-,R_0))(S) \otimes M(R_q)$. If we write $p$ for the number of vertices in $S$, the homology of $\bBar(\RR(-,R_0))(S)$ is concentrated in degree $p$ by the lemma, and we find that 
 $$H_p(C_{-,q})= \underline k [Iso(S,R_0)] \otimes M(R_q)$$
 (since up to coinvariants there is only one factorisation $S\to T \to R$ of each isomorphism $S \to R$.) This shows that the double complex computes the homology $H_*(S/\RR, S//\RR ; M)$ shifted by $p$, and the theorem is proved.
\end{proof}

\appendix

\section{Comonads}\label{comonads}

In this appendix we will show that preoperads and their algebras are coalgebras for a comonad. This will in particular imply the existence of a change-of-base functor, right adjoint to the push-forward of prealgebras along a morphism of preoperads, introduced in Section~\ref{pushforward}.

\begin{prop}
 The category of preoperads is the category of coalgebras over a comonad 
 $$G: Psh(\AA) \to Psh(\AA).$$
 (The functor $G$ and its comonad structure will be described explicitly in the proof.)
\end{prop}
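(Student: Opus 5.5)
We need to construct a comonad $G$ on $Psh(\AA)$ whose coalgebras are exactly preoperads, i.e. presheaves $X$ with maps $X(S\circ_e T)\to X(S)\otimes X(T)$ that are natural, coassociative and counital. The plan is to package all iterated decomposition maps at once: a preoperad structure determines, for every morphism $\gamma: V\to T$ in $\AA$ (equivalently, every decomposition $T=T_r\circ_{e\in E}T_e$ as in Section~\ref{S:Categories}), a map
$$X(T)\to \sbigotimes_{v\in V} X(T(\gamma)_v),$$
obtained by iterating the $\theta$'s; associativity for nested and parallel graftings makes this independent of the order of iteration. So I would define
$$G(X)(T)=\prod_{[\gamma:V\to T]} \sbigotimes_{v\in V} X(T(\gamma)_v),$$
the product over isomorphism classes of morphisms into $T$ in $\AA$, i.e. over decompositions of $T$. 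This is a finite product because $T$ has finitely many sets of inner edges. The presheaf structure for an inner face $\delta:T'\to T$ comes from the fact that a decomposition of $T$ along $E$ restricts to a decomposition of $T'$ along the edges of $E$ surviving in $T'$. The blown-up pieces $T(\gamma)_v$ map to the corresponding pieces of $T'$ by inner faces, and we apply $X$ to these. If the edge contracted by $\delta$ lies in $E$, we instead use the projection for a coarser decomposition. Naturality in isomorphisms is evident.

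The counit $G(X)\to X$ is projection onto the trivial decomposition $E=\emptyset$, where $V$ is the corolla and the single piece is $T$ itself. The comultiplication $G(X)\to GG(X)$ sends the factor indexed by a decomposition $E$ of $T$ together with refinements $E_v$ of each piece $T(\gamma)_v$ to the factor indexed by the total decomposition $E\cup\bigcup_v E_v$. This uses the bijection between such nested decompositions and pairs consisting of a decomposition of $T$ and a coarsening of it, which is the $\AA$-analogue of Lemma~\ref{L.Decomp}. Coassociativity and counitality then reduce to associativity of unions of edge sets.

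Given a coalgebra $c:X\to G(X)$, counitality forces the component at $E=\emptyset$ to be the identity. The components at single-edge decompositions $E=\{e\}$ give the maps $\theta_e$. The coalgebra identity $G(c)\circ c=\mu\circ c$, evaluated on two-step refinements, yields both the nested and the parallel associativity conditions, and shows that the components for larger $E$ are the iterated $\theta$'s. Conversely, a preoperad defines $c$ by iterated decomposition, and naturality of $\theta$ in $S$ and $T$ is exactly naturality of $c$. Morphisms clearly correspond, and the unit conventions $X(\eta)=k$ are handled by allowing $\eta$-pieces, which are forced to be $k$.

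I expect the main obstacle to be the presheaf structure of $G(X)$ under inner faces $\delta$ whose contracted edge belongs to $E$. One must check functoriality and compatibility with the comultiplication carefully. The fix I would try is to describe $G(X)(T)$ as a limit over the poset of edge subsets, so that restriction along $\delta$ is induced by the evident map of posets.
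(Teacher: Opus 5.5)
Your proposal is correct and follows essentially the same route as the paper: $G(X)(T)$ is the finite product over decompositions $\gamma\colon V\to T$ in $\AA$ of $\bigotimes_{v} X(T(\gamma)_v)$, the counit projects onto the corolla factor, the comultiplication is indexed by nested factorisations $S\to R\to T$, and coalgebra structures are the iterated $\theta$'s. The step you flag as the main obstacle is not one: for $\delta\colon T'\to T$, the component of $\delta^*\omega$ at $\gamma'\colon V\to T'$ is obtained from $\omega_{\delta\gamma'}$ by applying $X$ to the inner faces $T'(\gamma')_v\to T(\delta\gamma')_v$, so factors of $\omega$ indexed by decompositions that use a contracted edge are simply discarded, and no coarsening case or poset-indexed limit is needed (for a bare presheaf $X$ there are in any case no maps between factors indexed by comparable edge sets).
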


\begin{proof}
 We will give explicit descriptions of the structures involved, but leave some straightforward verifications to the reader.
 
 First of all, for a presheaf $X$ and a tree $T$, define
 $$G(X)(T)= \Big( \displaystyle{ \prod_{\alpha:S\to T} \sbigotimes_{v \in S} X (T(\alpha)_v)} \Big)^{inv}.$$
 Here $\alpha$ ranges over morphisms in $\AA$ with codomain $T$, $v$ over vertices in $S$, and $T(\alpha)_v$ is the subtree of $T$ defined as the blow-up of the vertex $v$, cf. Section~\ref{S:Categories} above. The invariants are taken over isomorphisms over $T$. To be explicit, let us write an element $\omega$ of $G(X)(T)$ as a family $(\omega_\alpha)$ where $\alpha: S\to T$, and $\omega_\alpha \in \sbigotimes_v X (T(\alpha)_v)$. Then invariance means that for each isomorphism $\gamma : S \to R$ over $T$ and the induced isomorphism $\gamma_v$ as in 
\begin{equation*}
\xymatrix@M=6pt{
 S\ar[dr]_{\alpha}\ar[rr]^{\gamma} && R \ar[dl]^{\beta}\\
 &  T  \\}
 \quad \ \quad \ \quad \
 \gamma_v :T(\alpha)_v \isomap T(\beta)_{\gamma(v)}
\end{equation*}  
 we have that 
 $$\sbigotimes_v \gamma_v^* : \sbigotimes_v X(T(\beta)_{\gamma(v)}) \to \sbigotimes_v X(T(\alpha)_{v})$$
 maps $\omega_\beta$ to $\omega_\alpha$.
 
 There are now a few things to check, all relatively straightforward, and listed below:

 (i) $G(X)$ indeed has the structure of a presheaf on $\AA$. Consider a morphism $\beta: T\to T'$ in $\AA$. 
  Then $\beta^*:G(X)(T')\to G(X)(T)$ is the map making the following diagram commute, for each $\alpha: S \to T$ in $\AA$
\begin{equation*}
\xymatrix@M=6pt{
 G(X)(T') \ar[d]_{proj_{\beta\alpha}}\ar[r]^{\beta^*} & G(X)(T) \ar[d]^{proj_{\alpha}}\\
  \sbigotimes_v X(T'(\beta\alpha)_{v}) \ar[r] &  \sbigotimes_v X(T(\alpha)_{v})\\}
\end{equation*}  
The lower arrow here is defined for each $\alpha$ by the effect of the presheaf structure of $X$ on the maps 
$$T(\alpha)_v \to T'(\beta\alpha)_v$$
for each $v \in S$.
These maps come from the successive factorisations as in
\begin{equation*}
\xymatrix@M=6pt
{S \ar[r]^\alpha &T \ar[r]^\beta & T' \\
  C_v \ar@{-->}[r] \ar[u]^{i_v} 
  &T(\alpha)_v \ar@{-->}[r]\ar@{-->}[u]^{\gamma}
  & T'(\beta\alpha)_v \ar@{-->}[u]^{\delta}
  }
\end{equation*}  
where the left hand square is the unique factorisation of $\alpha \circ i_v$ into a morphism of $\AA$ followed by an external face $\gamma$, and the right hand square is a similar factorisation of $\beta \gamma$.

(ii)   The functor $G$ has the structure of a comonad.
The \emph{counit} $\varepsilon$ is the map defined at an object $T$ of $\AA$ by the projection on the factor where $\alpha$ is the map from the corolla $C_{\lambda(T)}$ whose leaves are those of $T$,
$$\Big(\displaystyle{\prod_{\alpha:S \to T} \sbigotimes_v X (T(\alpha)_v) }\Big)^{inv} \to X(T).$$
Indeed, the map $C_{\lambda(T)} \to T$ is unique up to isomorphism, so this projection is well-defined on invariants. 
The \emph{comultiplication} $\delta :G(X) \to G^2(X)$ at $T$ is the map from $G(X)(T)=\Big(\displaystyle{\prod_{\alpha:S \to T} \sbigotimes_v X (T(\alpha)_v) }\Big)^{inv}$ into the object $G^2(X)(T)$ which is 
$$\Big(\displaystyle{\prod_{\alpha:S \to T} \sbigotimes_v \big(\prod_{\beta_v:R_v \to T(\alpha)_v} \sbigotimes_w X ((T(\alpha)_v)(\beta_v)_w)\big)^{inv} }\Big)^{inv}$$
defined as follows.
Take an element $\omega=(\omega_\alpha)$ in $G(X)(T)$, and assume $\omega_\alpha=\sbigotimes_v \omega_\alpha(v)$. (In reality, $\omega_\alpha$ is a sum of such tensors.) Its image under $\delta$ has as its value at $\alpha:S \to T$ and as tensor factor for $v\in S$ an element
$$\delta(\omega)_\alpha (v) \in
\big(\prod_{\beta_v:R_v \to T(\alpha)_v} \sbigotimes_w X ((T(\alpha)_v)(\beta_v)_w)\big)^{inv}.$$
To define this value, note that the maps $\beta_v : R_v \to T(\alpha)_v$ graft together to a factorisation $S \stackrel{\gamma}{\to} R \stackrel{\beta}{\to} T$ of $\alpha$, where $\gamma$ blows up each vertex $v$ to a tree $R_v=R(\gamma)_v$.
Then we set
$$(\delta(\omega)_\alpha(v))_{\beta v}(w):=\omega_\beta(w).$$
Ignoring invariants for simplicity of notation, here is another way to describe the same map $G(X)(T) \to G^2(X)(T)$.
Since the products involved are all finite, they distribute over the tensors and the codomain $G^2(X)(T)$ can be identified with the product
$$\displaystyle{\prod_{S \stackrel{\alpha}{\to} R \stackrel{\beta}{\to} T}} \sbigotimes_w X(T(\beta)_w)$$
where $w$ ranges over vertices of $R$. To compare to the earlier description, each such vertex $w$ lies in $R(\alpha)_v$ for a unique vertex $v$ of $S$. Then $\delta :  G(X)(T) \to G^2(X)(T)$ is the map making the following diagram commute for each pair $S \stackrel{\alpha}{\to} R \stackrel{\beta}{\to} T$:
\begin{eqnarray*}
\xymatrix@M=8pt{
 \displaystyle{\prod_{S \stackrel{\alpha}{\to} T}} \sbigotimes_v X(T(\alpha)_v) \ar@{-->}[rr] \ar[dr]_{proj_\beta} &&  \displaystyle{\prod_{S \stackrel{\alpha}{\to} R \stackrel{\beta}{\to} T}} \sbigotimes_w X(T(\beta)_w) \ar[dl]^{proj_{\alpha,\beta}}
 \\
 & X(T(\beta)_w) &}
  \end{eqnarray*}
 
 (iii) If $X$ is a preoperad, its structure maps can be iterated, so as to give for each morphism $\alpha: S \to T$ in $\AA$ a map 
$$X(T) \to \sbigotimes_{v\in S} X(T(\alpha)_v).$$
This map is well-defined by the associativity constraints on the structure maps $\theta$.
These maps $X(T) \to \sbigotimes_{v\in S} X(T(\alpha)_v)$ for all morphisms $\alpha$ together define a map $\theta: X(T) \to G(X)(T)$, and give $X$ the structure of a coalgebra over the comonad $G$.

We leave it to the reader to check that the maps $\delta$ and $\varepsilon$ indeed satisfy the equations for a comonad, and that a coalgebra structure $\theta:X \to G(X)$ is equivalent to a preoperad structure on $X$. These verifications are a bit lengthy but straightforward.
\end{proof}

 Next, let us consider a preoperad $X$, and the associated category of prealgebras $Prealg(X)$. We claim that this category is again of the form coalgebras for a comonad, as stated in the following proposition.
 An explicit description of the comonad will again be given in the course of the proof.
 
 \begin{prop}\label{Coalg=Comonad}
  Let $X$ be a preoperad. Then the category $Prealg(X)$ of prealgebras over $X$ is the category of coalgebras for a comonad on the category of presheaves on $\RR$,
  $$H_X : Psh(\RR) \to Psh(\RR).$$
 \end{prop}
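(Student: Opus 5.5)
I would imitate the construction of the comonad $G$ in the previous proposition, now for presheaves on $\RR$ and prunings. The key observation is the one already made in Section~\ref{defalgebras}: the structure maps $\tau^\alpha$ for a general $\alpha$ in $\RR$ are determined by those for prunings. So a prealgebra structure amounts to a compatible family of maps $M(T)\to X(S)\otimes M(T/S)$, one for each pruning $\alpha:S\to T$. I would therefore define, for a presheaf $M$ on $\RR$ and a tree $T$,
$$H_X(M)(T)=\Big(\prod_{\alpha:S\to T \text{ pruning}} X(S)\otimes \sbigotimes_{\ell\in\lambda(S)} M(T_\ell)\Big)^{inv},$$
with invariants taken for isomorphisms over $T$. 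The product is finite, since $T$ has only finitely many prunings; hence the product commutes with tensors, which will be needed for the comultiplication. The factor for $\alpha=\eta\to T$ (the root) is $M(T)$, and the factor for $\alpha=1_T$ is $X(T)\otimes k$.

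The steps, in order, are as follows.
\begin{enumerate}
\item[(1)] Make $H_X(M)$ a presheaf on $\RR$. Since every morphism factors as an inner face followed by a pruning, it suffices to define $\beta^*$ for these two kinds of morphism.
\begin{itemize}
\item For an inner $\beta:T\to T'$, a pruning $S\to T$ corresponds, by the factorisation of $S\to T\to T'$ (Lemma~\ref{L.Decomp}), to a pruning $S'\to T'$ together with a map $S\to S'$ in $\AA$ and maps $T_\ell\to T'_\ell$. We then use $X(S')\to X(S)$ and the $M(T'_\ell)\to M(T_\ell)$.
\item For a pruning $\beta:T\to T'$, a pruning $S\to T$ composes to a pruning $S\to T'$ with $T_\ell\subseteq T'_\ell$ prunings, and we use $M(T'_\ell)\to M(T_\ell)$.
\end{itemize}
\item[(2)] Define the counit $H_X(M)\to M$ as the projection on the factor $\alpha:\eta\to T$. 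This factor is $X(\eta)\otimes M(T)=M(T)$, since the root inclusion is the unique pruning from $\eta$.
\item[(3)] Define the comultiplication $H_X(M)\to H_XH_X(M)$. Its target is a product over pairs (a pruning $S\to T$, and prunings $R_\ell\to T_\ell$ for each leaf $\ell$) of
$$X(S)\otimes\sbigotimes_\ell\Big(X(R_\ell)\otimes\sbigotimes_m M((T_\ell)_m)\Big).$$
Grafting the $R_\ell$ onto $S$ gives a pruning $R=S\circ(R_\ell)\to T$ whose upper pieces are the $(T_\ell)_m$. The comultiplication is then the projection on the $R$-factor followed by the preoperad structure maps $\theta:X(R)\to X(S)\otimes\bigotimes_\ell X(R_\ell)$ for the iterated grafting, which are well defined by the associativity of $\theta$. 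This is the only place where the preoperad structure of $X$ enters, and coassociativity reduces to the associativity of $\theta$.
\item[(4)] Identify coalgebras with prealgebras. A coalgebra map $M\to H_X(M)$ is exactly a family $\tau^\alpha$ over prunings, invariant under isomorphisms. The three conditions match as follows:
\begin{itemize}
\item naturality of the coalgebra map $\leftrightarrow$ naturality of $\tau$ in $S$ and $T$;
\item the counit law $\leftrightarrow$ unitality ($\tau$ is the identity for $S=\eta$);
\item coassociativity $\leftrightarrow$ the associativity conditions for nested prunings.
\end{itemize}
Morphisms correspond as well. Finally, extend $\tau$ to all maps in $\RR$ as in Section~\ref{defalgebras}.
\end{enumerate}

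The main obstacle I expect is step (1) for inner faces $\beta:T\to T'$. The correspondence between prunings of $T$ and of $T'$ is not bijective: a pruning of $T'$ whose cut edge is contracted in $T$ has no counterpart. So I must check that using only the factorisation of Lemma~\ref{L.Decomp} gives a well-defined, functorial $\beta^*$, compatible with invariants and with the comultiplication. I would handle this the way the blow-up factorisations are handled for $G$, by verifying functoriality on elementary faces and appealing to the uniqueness of inner–external factorisations.
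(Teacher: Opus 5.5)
Your proposal is correct, and its skeleton is the paper's. Both use $H_X(M)(T)=\big(\prod_\alpha X(S)\otimes M(T/S)\big)^{inv}$, with counit the projection onto the root factor $\eta\to T$. Both define the comultiplication by projecting onto the factor of the grafted tree $R=S\circ(R_\ell)_\ell$ and applying the iterated $\theta$, and both take the coalgebra structure to be the product of the $\tau^\alpha$.

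The real difference is the indexing: the paper takes the product over \emph{all} maps $\alpha:S\to T$ of $\RR$, while you take it only over prunings. The paper's choice makes $\beta^*$ on $H_X(V)$ a plain projection onto the $\beta\alpha$-factor followed by restriction in $V$. It needs no factorisation and no use of the presheaf structure of $X$. Yours needs the inner--external factorisation of $\beta\alpha$ and $\gamma^*$ on $X$. As a result, checking that $\delta$ is a map of presheaves also uses the naturality of $\theta$ along inner faces.

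In exchange, your choice is what makes step (4) work. With all maps of $\RR$ and invariants only under isomorphisms, a coalgebra also has components at non-prunings, and the coalgebra axioms do not force $\tau^{\alpha'\gamma}=(\gamma^*\otimes\mathrm{id})\tau^{\alpha'}$. Here is an example. Take $X$ and $V$ constant with value $k$, and let $\tau^\alpha$ be multiplication by $c^{v(S)}$, where $v(S)$ is the number of vertices of the domain and $c\in k$, $c\neq 0,1$. This family is natural in $T$, counital and coassociative for the paper's $H_X$, since vertex counts add under grafting. But it is not a prealgebra, because $c^{v(S)}\neq c^{v(S')}$ for a nontrivial inner face $S\to S'$. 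In your version the same family already fails naturality of $M\to H_X(M)$ along inner faces, as it should. So restricting to prunings is not cosmetic. It is equivalent to replacing isomorphism-invariants by families compatible with inner maps over $T$.

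Two minor points. First, the obstacle you anticipate in step (1) is not one. $\beta^*$ only needs, for each pruning of $T$, the induced pruning of $T'$. Factors of $T'$ without a counterpart are simply projected away, exactly as in the paper, and functoriality follows from uniqueness of inner--external factorisations. Second, the factor for $\alpha=1_T$ is $X(T)\otimes M(\eta)^{\otimes\lambda(T)}$, not $X(T)\otimes k$, since $M(\eta)$ need not be $k$ (it is $A$ for $N(P,A)$). You never use this, so nothing breaks.
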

 
 \begin{proof}
 The proof proceeds along the same lines as that of the previous proposition. The functor $H_X$ is defined for a presheaf $V$ and a tree $T$ in $\RR$ by
 $$H_X(V)(T)= 
 \Big( \displaystyle{\prod_{\alpha: S\to T}} X(S) \otimes V (T/ S)\Big)^{inv}
 =
 \Big( \displaystyle{\prod_{\alpha: S\to T}} X(S) \otimes \sbigotimes_\ell V(T(\alpha)_\ell)\Big)^{inv}
 $$
 where $\alpha$ ranges over maps in $\RR$, $\ell$ over leaves of $S$, and $T(\alpha)_\ell$ is the subtree of $T$ above the image of $\ell$ under $\alpha$, all as before. Let us check a few things.

 (i) $H_X(V)$ is a contravariant functor of $T$. Indeed, given $\beta:T \to T'$ in $\RR$, the map $\beta^* : H_X(V)(T') \to H_X(V)(T)$ is the one making each square of the following form commute, for each $\alpha:S \to T$:
  \begin{eqnarray*}
\xymatrix@M=8pt{
H_X(V)(T') \ar[d] \ar@{-->}[r]^{\beta^*} & H_X(V)(T) \ar[d]
 \\
 X(S) \otimes \sbigotimes_\ell V(T'(\beta \alpha)_\ell)\ar[r]
 &
 X(S) \otimes \sbigotimes_\ell V(T(\alpha)_\ell)
 }
  \end{eqnarray*}
  where the lower map comes from the morphisms $T(\alpha)_\ell \to T'(\beta \alpha)_\ell$ induced by $\beta$. These maps arise by factoring $\alpha$ and $\beta\alpha$ each as a map in $\AA$ followed by an external face
  \begin{eqnarray*}
\xymatrix@M=8pt{
S \ar[r]^\alpha & T \ar[r]^\beta & T'
 \\
 S\ar[r]^\delta \ar@{=}[u] &
 S' \ar[r]^{\delta'} \ar[u]^\gamma &
 S'' \ar[u]^{\gamma'}
 }
  \end{eqnarray*}
with $\gamma, \gamma'$ in $\RR$, and $\delta, \delta'$ in $\AA$.  
  Then $\delta$ and $\delta'$ are isomorphisms on the leaves.
  If $\delta(\ell)=\ell'$ and $\delta'(\ell')=\ell''$ then $\beta$ maps $\gamma(\ell)$ to $\gamma'(\ell')$, so the subtree $T(\gamma)_{\ell'}$ of $T$ with root $\gamma(\ell')$ is mapped by $\beta$ to the subtree $T'(\gamma')_{\ell''}$ of $T'$ with root $\gamma'(\ell'')$. These are exactly the trees $T(\alpha)_\ell$ and $T'(\beta \alpha)_\ell$.
  
  (ii)  $H_X$ has the structure of a comonad: The \emph{counit} $H_X(V) \stackrel{\varepsilon}{\to} V$ at $T$ is the map
  $$\Big( \displaystyle{\prod_{\alpha: S\to T}} X(S) \otimes \sbigotimes_\ell V(T(\alpha)_\ell)\Big)^{inv} \to V(T)  $$
  given by the projection onto the factor where $\alpha:S \to T$ is the inclusion $\eta \to T$ of the root of $T$. The corresponding $T(\alpha)_\ell$ is just $T$ in this case, and $X(\eta)=k$ by convention.
  The \emph{comultiplication} $\delta : H_X(V) \to H_X H_X (V)$ at $T$ is to be a map from $H_X(V)(T)= \Big( \displaystyle{\prod_{\alpha: S\to T}} X(S) \otimes \sbigotimes_\ell V(T(\alpha)_\ell)\Big)^{inv}$
  into the invariants of
  $$\displaystyle{\prod_{\alpha: S\to T}} X(S) \otimes \sbigotimes_\ell \big(\displaystyle{\prod_{\beta_\ell: R_\ell \to T(\alpha)_\ell}} X(R_\ell) \otimes \sbigotimes_k V(T(\alpha)_\ell(\beta_\ell)_k)\big).$$
  Here $\ell$ ranges over the leaves of $S$ and $k$ over those of $R_\ell$. As before, this last object is isomorphic to
  $$\displaystyle{\prod_{ S\stackrel{\alpha}{\to} R \stackrel{\beta}{\to}  T}} X(S) \otimes \sbigotimes_\ell X(R(\alpha)_\ell) \otimes \sbigotimes_k V(T(\beta)_k)$$
  where $k$ now ranges over all leaves of $R$. The map $\delta$ is the one making the diagram below commute, for each $ S\stackrel{\alpha}{\to} R \stackrel{\beta}{\to}  T$
  \begin{eqnarray*}
\xymatrix@M=8pt{
 H_X(V)(T) \ar[r]^\delta \ar[d]^{proj_\beta} &
 H_X H_X(V)(T) \ar[d]^{proj_{\beta \circ \alpha}}\\
 X(R) \otimes \sbigotimes_k V(T(\beta)_k) \ar[r]
 &
X(S) \otimes \sbigotimes_\ell X(R(\alpha)_\ell) \otimes \sbigotimes_k V(T(\beta)_k)
 } 
  \end{eqnarray*}
The lower map here comes from the operad structure of $X$, applied to the grafting of the $R(\alpha)_\ell$ onto the leaves of $S$ which results in the tree $R$.

(iii)
  Finally, the structure of a prealgebra $V$ over $X$ as given by maps $\tau^\alpha : V(T) \to  X(S) \otimes \sbigotimes_\ell V(T(\alpha)_\ell)$ for $\alpha : S\to T$ comes down to the structure of a coalgebra $\tau: V \to H_X(V)$ by simply taking the product of the $\tau^\alpha$'s, giving a map
  $$V(T) \to \Big( \displaystyle{\prod_{\alpha: S\to T}} X(S) \otimes \sbigotimes_\ell V(T(\alpha)_\ell)\Big)^{inv}$$ 
  for each tree $T$.
  
  It remains to check the relevant identities, showing that $\delta$ and $\varepsilon$ indeed define a coalgebra structure on $H_X$, and that $\tau : V \to H_X(V)$
 as just described is a coalgebra structure. These are all straightforward and left to the reader.
\end{proof}

Next, consider a map of preoperads $f : X \to X'$. This map induces a map of comonads $H_f:H_X \to H_{X'}$. By composition, $H_f$ defines a functor on coalgebras, which we denote
$$f_! : Coalg(H_X) \to Coalg(H_{X'})$$
because it can be identified with the functor $f_!: Prealg(X) \to Prealg(X')$ under the correspondence of Proposition~\ref{Coalg=Comonad}.

\begin{prop}\label{BaseChange}
 The functor $f_!:Coalg(H_X) \to Coalg(H_{X'})$ has a right adjoint
 $$f^*: Coalg(H_{X'}) \to Coalg(H_{X}).$$
 For an $H_{X'}$-coalgebra $(M', \theta')$, its value $f^*(M)$ can be computed as the equaliser in $Psh(\RR)$
 \begin{eqnarray*}
\xymatrix@M=8pt{
 f^*(M) \ar[r] & H_X(M')\ar@<-.5ex>[r] \ar@<.5ex>[r] & H_X H_{X'}(M')
 } 
  \end{eqnarray*}
 of the parallel maps $H_X(\theta')$ and $(H_X H_f) \circ \delta$ at $M'$.
\end{prop}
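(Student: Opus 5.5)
This is the standard construction of a cofree-type right adjoint for a map of comonads, done through the equaliser displayed in the statement. Throughout, $(M,\theta)$ denotes an $H_X$-coalgebra and $(M',\theta')$ an $H_{X'}$-coalgebra.

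\emph{Step 1: the presheaf $f^*(M')$.} Let $E\subseteq H_X(M')$ be the equaliser in $Psh(\RR)$ of
\[
a=H_X(\theta') \quad\text{and}\quad b=(H_XH_f)\circ\delta_{M'}, \qquad H_X(M')\rightrightarrows H_XH_{X'}(M').
\]
Equalisers in $Psh(\RR)$ are computed objectwise in $Ch$, so $E$ exists.

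\emph{Step 2: the coalgebra structure on $E$.} We need $E$ to be an $H_X$-coalgebra. The cofree coalgebra $H_X(M')$ has structure $\delta_{M'}$, and $a$ and $b$ are maps of cofree $H_X$-coalgebras. For $a$ this is just naturality of $\delta$. For $b$ it uses naturality of $\delta$ together with the fact that $H_f$ is a comonad morphism.

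The equaliser is \emph{not} automatically a subcoalgebra, since $H_X$ need not preserve equalisers. However, $H_X(V)(T)$ is built from finite products, invariants and $X(S)\otimes-$ applied to $V$ at subtrees. Each of these operations is exact over the field $k$, since tensoring over a field is exact and finite products and invariants under finite groups preserve kernels. Hence $H_X$ preserves equalisers objectwise. I will verify this exactness carefully, because it is the point that makes the explicit formula valid. Given it, $\delta_{M'}$ restricts to a map $E\to H_X(E)$, and the comonad identities are inherited from those of $H_X(M')$.

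\emph{Step 3: the adjunction bijection.} Let $\pi\colon H_X(M')\to M'$ denote the counit $\varepsilon$ at $M'$.

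In one direction, let $g\colon f_!M\to M'$ be a map of $H_{X'}$-coalgebras, meaning $\theta' g=H_{X'}(g)\,H_{f,M}\,\theta$. Set
\[
\tilde g=H_X(g)\circ\theta\colon M\to H_X(M').
\]
This is an $H_X$-coalgebra map, since $\theta$ is coassociative. It also equalises $a$ and $b$. Indeed,
\[
H_X(\theta')H_X(g)\theta=H_X\big(H_{X'}(g)H_f\theta\big)\theta .
\]
Using $H_X(\theta)\theta=\delta\theta$ and naturality, this equals $H_XH_f\,\delta\,H_X(g)\theta$. So $\tilde g$ factors through $E$.

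Conversely, given an $H_X$-coalgebra map $h\colon M\to E$, put $g=\pi\circ h$. One checks that $g$ respects the $H_{X'}$-structures by using the equaliser condition composed with $\varepsilon_{H_{X'}M'}$. Here $\varepsilon\circ H_X(\theta')=\theta'\circ\varepsilon$, and $\varepsilon\circ H_XH_f\circ\delta=H_f$.

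The two constructions are mutually inverse:
\begin{itemize}
\item $\pi H_X(g)\theta=g\,\varepsilon\,\theta=g$ by the counit law;
\item $H_X(\pi h)\theta=H_X(\pi)\delta h=h$, using that $h$ is a coalgebra map together with the comonad counit law.
\end{itemize}
Naturality in $M$ and $M'$ is formal.

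I expect the main obstacle to be Step 2, namely justifying that $H_X$ preserves the equaliser so that $E$ carries a coalgebra structure. The comonad diagram chases in Step 3 are routine.
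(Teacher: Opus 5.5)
There is a genuine gap in Step 2, at the point you flagged yourself: the claim that $H_X$ preserves all equalisers is false. The presheaf $V$ does not enter $H_X(V)(T)$ only through $X(S)\otimes-$. It enters through $V(T/S)=\bigotimes_{\ell}V(T(\alpha)_\ell)$, which is a tensor product of one copy of $V$ for each leaf of $S$. Exactness of $W\otimes-$ for a fixed $W$ does not make $V\mapsto V(T_1)\otimes V(T_2)$ preserve equalisers, even over a field. For example, take the two coordinate projections $a,b\colon k^2\to k$. Their equaliser is the diagonal, of dimension $1$. But the equaliser of $a\otimes a$ and $b\otimes b\colon k^2\otimes k^2\to k$ has dimension $3$. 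In general the equaliser of a tensored pair only contains the tensor product of the equalisers. So from ``$a,b$ are coalgebra maps'' you only get that $\delta_{M'}(E)$ lies in the equaliser of $H_X(a)$ and $H_X(b)$, and this need not be $H_X(E)$. The rest of your architecture is the paper's: the equaliser $E$, the structure obtained by restricting $\delta_{M'}$, and the bijection in Step 3. Step 3 is correct, and more explicit than the paper, which defers to general comonad theory.

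The missing idea is that this particular pair is \emph{coreflexive}. The map $H_X(\varepsilon'_{M'})$ is a common retraction of $H_X(\theta')$ and $(H_XH_f)\circ\delta_{M'}$; this uses $\varepsilon'\theta'=\mathrm{id}$, $\varepsilon'\circ H_f=\varepsilon$ and $H_X(\varepsilon)\circ\delta=\mathrm{id}$. Coreflexive equalisers are preserved by tensor products, given flatness, which is automatic over a field. Suppose $A\to B$ equalises $f,g\colon B\to C$ with common retraction $s$, and $A'\to B'$ equalises $f',g'\colon B'\to C'$ with common retraction $s'$. Let $p$ equalise $f\otimes f'$ and $g\otimes g'$. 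Composing with $s\otimes 1$ shows that $p$ equalises $1\otimes f'$ and $1\otimes g'$. Composing with $1\otimes s'$ shows that it equalises $f\otimes 1$ and $g\otimes 1$. Hence $p$ factors through $(B\otimes A')\cap(A\otimes B')=A\otimes A'$. By induction on the number of leaves, $V\mapsto V(T/S)$ preserves coreflexive equalisers. The product over $\alpha$, the invariants and $X(S)\otimes-$ handle the rest. This is exactly the lemma the paper proves for this purpose. With it replacing your exactness claim, $\delta_{M'}$ restricts to a map $E\to H_X(E)$ and your proof goes through.
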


\begin{remark}
 Under the identification of Proposition~\ref{Coalg=Comonad}, this defines a change-of-base functor, again denoted 
 $$f^* : Prealg(X') \to Prealg(X).$$
\end{remark}

\begin{proof}[Proof of Proposition~\ref{BaseChange}]
 This is really part of the general theory of comonads or their dual monads.
 The main point is to show that the equaliser $f^*(M')$ in the statement of the proposition carries the structure of an $H_X$-coalgebra 
 $f^*(M') \to H_X(f^*(M'))$. This in turn follows from the fact that $H_X$ preserves this equaliser. To see this, notice that this equaliser
 is coreflexive, i.e. the parallel maps have a common retraction\\
 $$H \varepsilon' : H_X H_{X'} (M') \to H_X(M')$$
 where $\varepsilon'$ is the counit of $H_{X'}$. Next, one observes that the functor $H_X$ preserves coreflexive equalisers. (Remember we work over a field; the same will be true under sufficient flatness conditions.)
 Indeed, the product over $\alpha:S \to T$, the invariants and the tensor with $X(S)$ involved in the definition of $H_X(V)$ evidently do, and it suffices to show that the functor 
 $$ V \mapsto V (T / S)$$
 preserves coreflexive equalisers. Remember that $V(T / S)$ is a tensor product of copies of $V(T_\ell)$ for trees $T_\ell$, one for each leaf $\ell$ of $S$. The fact that $V \mapsto V (T / S)$ preserves coreflexive equalisers now follows by induction from the following well-known fact. The proof is elementary, but we include it for lack of a reference and convenience of the reader. This then completes the proof of the proposition.
\end{proof}

 \begin{lemma}
  Suppose
\begin{eqnarray*}
\xymatrix@M=8pt{
 A \ar[r]^i & B\ar@<-.5ex>[r]_g \ar@<.5ex>[r]^f & C
 } 
 \quad \quad \text{and} \quad \quad 
\xymatrix@M=8pt{
 D \ar[r]^j & E\ar@<-.5ex>[r]_k \ar@<.5ex>[r]^h & F
 } 
 \end{eqnarray*}
  are two coreflexive equalisers in a monoidal category, with retractions $s$ and $t$ respectively.
  Suppose also that $A,B, E$ and $F$ are flat (tensoring with each of them preserves equalisers).
  Then
\begin{eqnarray*}
\xymatrix@M=8pt{
 A \otimes D \ar[r]^{i \otimes j} 
 & B \otimes E \ar@<-.5ex>[r]_{g \otimes k} \ar@<.5ex>[r]^{f \otimes h} 
 & C\otimes F
 } 
 \end{eqnarray*}
  is again a (coreflexive) equaliser.
 \end{lemma}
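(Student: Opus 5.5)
The plan is to verify the equaliser property directly, using the retractions to produce elements of the equaliser. First I would check that $i\otimes j$ equalises the parallel pair: $(f\otimes h)(i\otimes j)=fi\otimes hj=gi\otimes kj=(g\otimes k)(i\otimes j)$. The pair $f\otimes h$, $g\otimes k$ is coreflexive with common retraction $s\otimes t$, since $(s\otimes t)(f\otimes h)=sf\otimes th=1$, and similarly for $g\otimes k$. It remains to show that $i\otimes j$ is the equaliser, and I would split this into two equaliser diagrams by factoring through the intermediate object $B\otimes F$ (or $C\otimes E$).

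Since $B$ is flat, tensoring the second equaliser with $B$ gives that $B\otimes D\to B\otimes E\rightrightarrows B\otimes F$ is an equaliser. Since $E$ is flat, $A\otimes E\to B\otimes E\rightrightarrows C\otimes E$ is an equaliser; flatness of $A$ and $F$ is used similarly for the variants needed below. Then $A\otimes D$ is the intersection (pullback) of $A\otimes E$ and $B\otimes D$ inside $B\otimes E$, and in an abelian setting such as vector spaces this holds because the functors involved are exact. So $A\otimes D$ is the joint equaliser of the two pairs $(f\otimes 1, g\otimes 1)\colon B\otimes E\rightrightarrows C\otimes E$ and $(1\otimes h,1\otimes k)\colon B\otimes E\rightrightarrows B\otimes F$.

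The main step is to show that any $x\colon Z\to B\otimes E$ with $(f\otimes h)x=(g\otimes k)x$ already equalises both of these pairs. I would use the standard coreflexive trick. Apply $1\otimes t$, viewing $C\otimes F\to C\otimes E$, to get
\[(f\otimes th)x=(g\otimes tk)x,\]
and since $th=tk=1$ this yields $(f\otimes 1)x=(g\otimes 1)x$. Symmetrically, applying $s\otimes 1$ gives $(1\otimes h)x=(1\otimes k)x$. This is exactly where coreflexivity is essential, and it is the crux of the argument; it is the step I expect to carry the real content. Given both equations, $x$ factors through $A\otimes E$ and through $B\otimes D$, hence uniquely through their intersection $A\otimes D$.

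The remaining obstacle is justifying the intersection step in a general monoidal category. I would handle it by factoring $x$ in two stages. First, $x$ factors through $A\otimes E$ as $x=(i\otimes 1)y$. Then $y$ satisfies $(i\otimes h)y=(i\otimes k)y$. Since $A$ is flat, $i\otimes 1_F$ is a monomorphism, because it is the equaliser of the pair $(f\otimes 1, g\otimes 1)$. Cancelling it gives $(1\otimes h)y=(1\otimes k)y$, and then flatness of $A$ lets $y$ factor uniquely through $A\otimes D$. Uniqueness of the whole factorisation follows because $i\otimes j$ is a composite of monomorphisms.
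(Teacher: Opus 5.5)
Your proof is correct and follows the paper's own argument: the paper also uses the common retractions $s$ and $t$ to show that a map equalising $f\otimes h$ and $g\otimes k$ equalises both one-sided pairs. It then factors the map through $A\otimes E$ and $B\otimes D$ and concludes via the top-left pullback square, which rests on flatness of $A$ and $F$ just as your two-stage factorisation does. One small slip: $i\otimes 1_F$ is an equaliser, hence a monomorphism, because $F$ is flat, not because $A$ is.
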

 
 \begin{proof}
 Consider the diagram
\begin{eqnarray*}
\xymatrix@M=8pt{
 A \otimes D \ar[r] \ar@{->}[d]
 & A \otimes E \ar@<-.5ex>[r]\ar@<.5ex>[r]  \ar@{>->}[d]
 & A\otimes F  \ar@{->}[d]\\
 B \otimes D \ar@{>->}[r] \ar@<-.5ex>[d]\ar@<.5ex>[d]
 & B \otimes E \ar@<-.5ex>[r]\ar@<.5ex>[r]\ar@<-.5ex>[d]\ar@<.5ex>[d]
 & B\otimes F \ar@<-.5ex>[d]\ar@<.5ex>[d]\\
 C \otimes D \ar[r]
 & C \otimes E \ar@<-.5ex>[r] \ar@<.5ex>[r]
 & C\otimes F.
 }
 \end{eqnarray*}
 First of all, using that $A$ and $F$ are flat, it is easy to see that the top left square is a pullback. Now suppose $p:X\to B \otimes E$ is a map equalising $f \otimes h$ and $g \otimes k$. Then using the common retractions $s$ of $f$ and $g$, and $t$ of $h$ and $k$, we see that $p$ equalises  
$\xymatrix@M=2pt{ B \otimes E \ar@<-.5ex>[r]\ar@<.5ex>[r]
 & B\otimes F}$ as well as 
 $\xymatrix@M=2pt{ B \otimes E \ar@<-.5ex>[r]\ar@<.5ex>[r]
 & C\otimes E}$. Since $B$ and $C$ are assumed flat, $p$ factors through $B \otimes D$ and $A \otimes E$ respectively, and by the pullback property also through $A \otimes D$.
\end{proof}

\begin{remark}
 (a) Let $f : X \to X'$ be a map of preoperads.
 Evidently $f_!: Prealg(X) \to Prealg(X')$ preserves quasi-isomorphisms. Since we work over a field, the description of $f^*$ as an equaliser shows that $f^*$ does as well.
 
 (b) If $f:X \to X'$ is itself a quasi-isomorphism, then $f_!$ and $f^*$ are mutually inverse up to quasi-isomorphism.
 Indeed, as $f_!$ also detects quasi-isomorphisms, it suffices to prove that the counit $f_!f^*(M') \to M'$ is a quasi-isomorphism for each $X'$-prealgebra $M'$. This follows by comparing the defining equaliser for $f^*(M')$ with the standard equaliser defining a cofree resolution of $M'$, as in the following diagram
 \begin{equation*}
\xymatrix@M=8pt{
f^*(M') \ar[r]\ar@{-->}[d] 
&H_X(M')\ar[d]^\wr  \ar@<-.5ex>[r] \ar@<.5ex>[r]
&H_X H_{X'}(M')\ar[d]^\wr
\\
 M' \ar[r] 
 &   H_{X'}(M')  \ar@<-.5ex>[r] \ar@<.5ex>[r]
 &  H_{X'}H_{X'}(M') \\}
\end{equation*}  
 (Note: this is a diagram in the underlying category of presheaves.)
\end{remark}

\section{Some remarks concerning signs}\label{AppendixSigns}

In earlier sections, we have omitted some of the routine verifications that the Koszul convention in the definitions gives the correct signs for all the equations. When done explicitly by evaluating on elements, these verifications can be quite lengthy. It is often easier to give diagrammatic arguments, as we will now illustrate in Section~\ref{SignsStructure}. By way of contrast, we will illustrate explicitly checking the signs by evaluation at elements in Section~\ref{SignsBarCobar}. 

%
%

\subsection{Proof of the properties of the cooperad structure on $\bBar X$}\label{SignsStructure}
Recall from Section~\ref{Section:Operads} that for a preoperad $X$ (with structure map $\theta$),
 the cooperadic structure on $\bBar(sX)$ is given on a decomposition $S= S_1 \circ_a S_2$ by  
 $$\bar \Delta^a = twist \circ (id \otimes \tilde \theta) \circ (drop_a \otimes id)$$
where $\tilde \theta$ is $(s \otimes s) \circ \theta \circ s^{-1}$, which  gives on elements the following formula:
$$\bar \Delta^a (\alpha, e \otimes x)= \sum \pm (\alpha_1, e_1 \otimes x_1) \otimes (\alpha_2, e_2 \otimes x_2)$$
where $e=ae_1e_2$, $\tilde \theta (x)= \sum x_1 \otimes x_2$ and $\pm$ means the associated Koszul sign.
To show that $\bBar(sX)$ with structure maps $\bar \Delta$ is a precooperad, we need to prove naturality and coassociativity of $\bar \Delta$ and also that $\bar \partial$ is a  coderivation with respect to $\bar \Delta$. We will now give a diagrammatic proof of these three facts.

\subsubsection{Naturality}\label{B111}
Consider a morphism of trees $\gamma: S \to S'$ in $\AA$ such that $\gamma= \gamma_1 \circ_a \gamma_2 : S_1 \circ_a S_2 \to S'_1 \circ_a S'_2$.
First note that for a tree $T$ and $\alpha : S \to T$ in $\AA$, there exists a factorisation $\alpha = \beta \circ \gamma$ if and only if there exist two factorisations $\alpha_1 = \beta_1 \circ \gamma_1$ and $\alpha_2 = \beta_2 \circ \gamma_2$. By definition, the structure map $\theta$ is natural, and thus $\tilde \theta$ is also. The twist map and the map $drop_a \otimes id$ do not interact with $\gamma_*$. Therefore it is clear that the following diagram commutes
 \begin{equation*}
\xymatrix@M=8pt{
\bBar(sX)(S_1\circ S_2) \ar[r]^{\gamma_*} \ar[d]^{\bar \Delta^a}
&\bBar(sX)(S'_1\circ S'_2) \ar[d]^{\bar \Delta^a}
\\
\bBar(sX)(S_1) \otimes \bBar(sX)(S_2) \ar[r]^{\gamma_{1*} \otimes \gamma_{2*}}
&\bBar(sX)(S'_1) \otimes \bBar(sX)(S'_2)
}
\end{equation*}  
which exactly gives the naturality of $\bar \Delta$.

\subsubsection{Coassociativity}\label{B112}
There are a priori two cases to consider, the ``sequential'' case where $S$ can be written $(S_1 \circ_a S_2) \circ_b S_3= S_1 \circ_a (S_2 \circ_b S_3)$ and the ``parallel'' case where $S$ can be written $(S_1 \circ_a S_2) \circ_b S_3= (S_1 \circ_b S_3) \circ_a S_2$.
We will only check the first case.
The second case is very similar to the first one, and is again based on the graded anti-coassociativity of $\tilde \theta$ (implied by the given coassociativity of $\theta$).

In the following diagram (for the first case), $tw$ denotes the twist maps (note that the horizontal twist is not the same as the vertical twist), the horizontal exterior composites are $\bar \Delta^a$ and the vertical ones are $\bar \Delta^b$. We have not labelled all the domains to make the diagram readable and have written $\bBar$ instead of $\bBar(sX)$.
\begin{equation*} 
\! \! \! \! \! \! \! \!\! \! \! \!\! \! \! \!\! \! \! \!\! \! \! \!
\xymatrix@M=12pt{
\bBar(S) \ar[d]^{drop_b} \ar[rr]^{drop_a}
&& . \ar[r]^{id \otimes \tilde \theta^a}\ar[d]^{drop_b}
& . \ar[rr]^{tw}\ar[d]^{drop_b}
&& \bBar(S_1) \otimes  \bBar(S_2 \circ_b S_3)\ar[d]^{drop_b}
\\
. \ar[d]^{id \otimes \tilde \theta^b}  \ar[rr]^{drop_a}
&& . \ar[d]_{id \otimes \tilde \theta^b} \ar[r]^{id \otimes \tilde \theta^a} \ar@{}[rd]^{C}
& . \ar[d]^{id \otimes \tilde \theta^b} \ar[rr]^{tw}
&& .  \ar[d]^{id \otimes \tilde \theta^b}
\\
.\ar[rr]^{drop_a}\ar[d]^{tw}
&& . \ar[r]^{id \otimes \tilde \theta^a} \ar[d]^{tw}
& .\ar[rr]^{tw}\ar[d]^{tw}
&& .\ar[d]^{tw}
\\
\bBar(S_1 \circ_a S_2) \otimes  \bBar(S_3)\ar[rr]^{drop_a}
&& . \ar[r]^{id \otimes \tilde \theta^a}
& .\ar[rr]^{tw} 
&& \bBar(S_1) \otimes  \bBar(S_2) \otimes  \bBar(S_3)
}
\end{equation*}  
The square in the middle denoted by $C$ anticommutes because of the  graded anti-coassociativity of $\tilde \theta$.
There are three other small squares which anticommute: the first two of the first column, and the top one of the middle column.
The remaining small squares commute, so that the total outer square commutes. This gives the coassociativity of $\bar \Delta$ for the sequential case. As said, the parallel case is similar.

\subsubsection{Coderivation}\label{B113}
We show separately that $\partial_{ext}$ and $\partial_{int}$ are coderivations with respect to $\bar \Delta$.

For $\partial_{int}$, notice first that $\partial_{sX}$ is an anticoderivation with respect to $\tilde\theta$ as by hypothesis $\partial_X$ is  a coderivation with respect to $\theta$. Using that $drop_a$ anticommutes with $\partial_{sX}$, this implies that $\partial_{int}$ is a coderivation with respect to $\bar \Delta$.

For $\partial_{ext}$, recall that an element $e \otimes x$ is sent to $(-1)^{e+x} \sum_i (-1)^{i-1}(\partial_i e \otimes \partial^*_{e_i} x)$.
Define $P_X$ to be $\bigoplus_{T_1,T_2} k[en_{T_1}] \otimes X(T_2)$. Its  differential is $\partial^P = (id \otimes \partial^*) \circ (\partial \otimes id)$,
where $\partial^*$ is the sum of the $\partial^*_{e_i}$ (without signs) and $\partial$ is $(-1)^{e+x} \sum_i (-1)^{i-1}\partial_i$, both acting on just one side of the tensor product. This definition implies  that $\partial_{ext}$  is the composite $\bBar(sX) \mono P_X \stackrel{\partial^P}{\to} P_X \stackrel{diag}{\to} \bBar(sX)$  (where $diag$ means the projection onto the diagonal $T_1=T_2$ summand).
Similarly, define $\bar \Delta^a$ on $P_X$ for an edge $a$ as 
$$tw \circ (id \otimes \theta^a) \circ (drop_a \otimes id)$$
(with the convention that if $a \notin T_2$,  $\theta^a=0$ and if $a \notin T_1, drop_a=0$),
so that $\bar \Delta^a$ is the composite  $\bBar(sX) \mono P_X \stackrel{\bar \Delta}{\to} {P_X \otimes P_X} \stackrel{diag \otimes diag}{\longrightarrow} \bBar(sX) \otimes \bBar(sX)$.

The proof is then given by the following commutative diagram, where $\partial$ means $\partial \otimes id$, $\partial^*$ means $id \otimes \partial^*$, and for $d$ a map on $P_X$, $d^2$ on $P_X \otimes P_X$ means the usual $d \otimes id + id \otimes d$. The two squares with $(-1)$ inside anticommute, and all the other small squares (and the single triangle) commute.
This proves that the exterior rectangle commutes.
%
%
%

 \begin{equation*}
 \mkern-140mu
\xymatrix@M=7pt{
\bBar(sX) \ar[dd]^{\partial_{ext}} \ar@{>->}[r]
& P_{sX} \ar@{}[rdd]^{(-1)}  \ar[dd]^{\partial_{ext}} \ar[r]^{s^{-1}}
& P_X \ar@{}[rrd]^{(-1)} \ar[dd]^{\partial_{ext}}  \ar[rr]^{drop} \ar[rd]^{\partial}
&& P_X \ar[r]^{tw \circ \theta \ \ } \ar[d]^{ \partial}
& P_X \otimes P_X  \ar[d]^{ \partial^2} \ar[r]^{s \otimes s}
& P_{sX}  \otimes P_{sX} \ar[dd]^{(\partial^P)^2} \ar@{->>}[r]^{diag^{\otimes 2} \ }
& \bBar(sX) \otimes \bBar(sX)  \ar[dd]^{\partial_{ext}^2}
\\
 & & & P_X\ar[dl]^{\partial^*} \ar[r]^{drop} & 
P_X  \ar[r]^{tw \circ \theta \ \ }  \ar[d]^{\partial^*}
& P_X \otimes P_X  \ar[d]^{(\partial^*)^2} 
 \\
\bBar(sX) \ar@{>->}[r]
 & P_{sX}  \ar[r]^{s^{-1}}
 & P_X \ar[rr]^{drop}
 && P_X   \ar[r]^{tw \circ \theta \ \ }
& P_X \otimes P_X\ar[r]^{s \otimes s}
& P_{sX}  \otimes P_{sX} \ar@{->>}[r]^{diag^{\otimes 2} \ }
& \bBar(sX) \otimes \bBar(sX) 
}
\end{equation*}

%
%

\subsection{Translation to the case of algebras}
The operadic case involves structure maps of the form $X(S_1 \circ S_2) \to X(S_1) \otimes X(S_2)$, or more generally $X(T) \to X(S) \otimes X(T/S)$, while the algebraic case involves maps of the form $M(T) \to X(S) \otimes M(T/S)$.
The diagrams involved in the algebraic case are of the same form as in the operadic case, except that the first component  of the tensor product in the codomain is in  $X$ instead of $M$. 
The properties, such as associativity, of an algebra induce the corresponding properties of its bar construction in a very similar way.
This is easier to see with diagrams rather than by writing equations with elements. We shall indicate it for the unshifted bar construction.

More precisely, recall from Section~\ref{defalgebras} that the structure of  a prealgebra $M : \RR^{op} \to Ch $ over a preoperad $X : \AA^{op} \to \Ch$ is given by maps $\theta^\alpha :M(T) \to X(S) \otimes M(T/S)$ for each $\alpha : S\to T$ in $\RR$.
In Section~\ref{Section:Alg}, we have defined coalgebraic structure maps on the (unshifted) bar construction of $M$. 
More explicitly, using the notation from Remark~\ref{Rem64},
we obtain maps $\overline \nabla^\alpha$ (of degree $-1$), given in terms of coinvariants for $y\in M(U)$ and $e \in en^+(U)$ by
$$\overline\nabla^\alpha (e,y)= (-1)^{f \cdot (\Sigma g_\ell + y_\ell)+e+y}
((f,(g_\ell)_\ell), x\otimes \sbigotimes_\ell y_\ell).$$
Note that each such map $\overline\nabla^\alpha$ can be decomposed as $twist \circ (id \otimes \theta) \circ (drop_{r} \otimes id)$
where $r$ denotes the root edge of $U$, and the $twist$ map puts in the right order the enumerations of the trees (inner edges of $S$ and inner edges plus root of the $T_\ell$'s) and the factors $x$ and the $y_\ell$'s.

The naturality of the structure maps $\overline \nabla$ follows directly from the naturality of $\theta$, as in~\ref{B111}. 

The graded coassociativity now has just one case, where $U$ is decomposed into three levels, as $(R \circ (S_\ell)_\ell) \circ (T_{\ell j})_{\ell,j}$. The map $\overline \nabla$ can either first split $U$ just above $R$ and then above each $S_\ell$, or can first split $U$ above the grafting $(R \circ (S_\ell)_\ell)$ and then above $R$. The coassociativity diagram is then of the same kind as the one in~\ref{B112}, except that instead of the edge $b$ there is now the set of the roots of the $S_\ell$'s (and $drop_b$ is thus replaced by dropping all these edges connecting $R$ and the $S_\ell$'s). Then all the squares in the diagram strictly commute, except the top left one which commutes or anticommutes depending on the parity of the number of $S_\ell$'s. This gives the graded coassociativity.

The proof that the differential of $\bBar M$ (as defined in Section~\ref{BarAlgCoinv}) is a graded coderivation with respect to $\overline \nabla$ can again be split into two, one for the internal part and one for external part of the differential.
For the internal part, we use that $\partial_M$ is a coderivation and that the dropping the root anticommutes with $\partial_M$. For the external part, the same method as in~\ref{B113} can be used, with two differences. First, the external differential now has two parts (but this is easily dealt with by defining $\partial^*=\sum (e_i^*-v_i^*)$). Secondly, $\overline \nabla$ sends $P_M$ to $P_X \otimes \sbigoplus_n P_M^{\otimes n}$ (where $n\geq 1$ is the number of subtrees above the cut). The second difference is dealt with by replacing $\partial^2$ (resp. $(\partial^*)^2$) in the diagram by $\sbigoplus \partial^{n+1}$ (resp. $\sbigoplus  (\partial^*)^{n+1}$), the induced differential on the tensor product. The rest of the proof is the same.

\subsection{Some signs in the proofs of the bar-cobar adjunction for (co)operads}\label{SignsBarCobar}

Recall that the bijective correspondence  between morphisms
 $\varphi :\coBar N \to M$ and $\psi : N \to \Bar M$ is given by the formulas 
$$\varphi_S(\alpha,y\otimes e)=(-1)^e\alpha^*\psi_T(y)_{1_T}(e)$$
 $$\psi_S(y)_\alpha(e)=(-1)^e\varphi_T(1_T, \alpha_*(y) \otimes e). $$
 Let us check first that $\varphi$ preserves the differential if and only if $\psi$ does, with signs. Then we will show that one is compatible with the operadic coproduct if and only if the other one is.
 
 By definition, $\psi$ preserving the differential means for $y \in N(S)$ that:
 $$\psi_S(\partial y) = \partial \psi_S(y),$$
 which evaluated in $\alpha:S\to T$ and $e \in en(T)$ gives
 $$\psi_S(\partial y)_\alpha(e)= \partial_M (\psi_S(y)_\alpha(e))
 +(-1)^y \int_d d^* \psi_S(y)_{d\alpha}(de).$$
Rewriting $\psi_S$ in terms of $\varphi_T$, we get
$$(-1)^e \varphi_T(1_T, \alpha_*(\partial y)\otimes e)
= (-1)^e\partial_M \varphi_T(1_T, \alpha_* y\otimes e)
+(-1)^{y+e+1} \int_d d^*\varphi_{T'}(1_{T'}, (d \alpha)_* y\otimes de).$$
 Cancelling $(-1)^e$, moving terms and writing $z=\alpha_ * y$, this can be rewritten as 
 \begin{equation}\tag{B.1}\label{eqphipsi}
\partial_M \varphi_T(1_T, z\otimes e) 
= \varphi_T(1_T,\partial z\otimes e)
+(-1)^{z} \int_d d^*\varphi_{T'}(1_{T'}, d_* z\otimes de).  
 \end{equation}
  By naturality of $\varphi$, we have $d^*\varphi_{T'}(1_{T'}, d_* z\otimes de) = \varphi_T(d,d_*z\otimes de)$, and by definition of the differential, $\partial(1,z\otimes e)=(1, \partial z \otimes e) + (-1)^z \int_d (d,d_* z\otimes de)$.
  So the equation~(\ref{eqphipsi})
  can be rewritten as $\partial_M \varphi_T(1,z\otimes e)= \varphi_T(\partial(1,z\otimes e))$. This shows that $\varphi$ preserves the differential whenever $\psi$ does. The converse holds by the same argument.
  
  
  Let us also check that each of $\varphi$ and $\psi$ (both of degree $- 1$) commutes with the coproduct if the other does.
  Suppose $(\varphi \otimes \varphi) \nabla (y\otimes e)=-\theta_M \varphi(y \otimes e)$ for $y\in N(T)$. We want to prove that $(\psi \otimes \psi)(\theta_N(y))=-\Delta(\psi y)$ for $y\in N(S)$.
It suffices to prove this equality after applying the isomorphism $\mu$ of Section~\ref{barcomplexpreoperad}. For the left hand side, this gives
  \begin{align*}
  & \mu(\psi \otimes \psi)(\theta(y))_{e_1,e_2} \\
  =&  \psi(y_1)_{e_1} \otimes \psi(y_2)_{e_2} (-1)^{y_1+e_1(y_2+1)}  \\
  =&\varphi(\alpha_{1*}y_1 \otimes e_1) \otimes \varphi(\alpha_{2*}y_2 \otimes e_2) (-1)^{e_1+e_2+y_1+e_1(y_2+1)}  \\
  =&(\varphi \otimes \varphi) ((\alpha_{1*}y_1 \otimes e_1) \otimes (\alpha_{2*}y_2 \otimes e_2)) (-1)^{y_1+e_1+e_1+e_2+y_1+e_1(y_2+1)}  \\
  =&(\varphi \otimes \varphi) (\nabla(\alpha_* y \otimes e)) (-1)^{y+e_1y_2}) (-1)^{e_1+e_2+e_1(y_2)}  \\
  =&(\varphi \otimes \varphi) (\nabla(\alpha_* y \otimes e)) (-1)^{y+e_1+e_2}  \\
  =& \theta \varphi(\alpha_* y \otimes e) (-1)^{y+e_1+e_2}   
  \end{align*}
Considering the right hand side of what we want to prove, we have
  \begin{align*}
  & (\mu\Delta(\psi y))_{e_1,e_2} \\
  =&  \Delta'(\psi y)_{e_1,e_2} \\
  =& (-1)^{y+1}\theta(\psi(y)(ae_1e_2))\\
  =& (-1)^{y+1}\theta((-1)^e \varphi_T(1, \alpha_* y \otimes e)) \\
  =& (-1)^{y+1+e}\theta(\varphi_T(1, \alpha_* y \otimes e))
  \end{align*}
  So, using the previous calculation, we obtain $(\mu\Delta(\psi y))_{e_1,e_2} = -(\mu(\psi \otimes \psi)\theta(y))_{e_1,e_2}$, which proves that if $\varphi$ is compatible with the coproduct, than $\psi$ is also. The same computation also proves the converse.


\bibliographystyle{plain}

\bibliography{biblio}

\end{document}